\newcounter{ENUM}
\newcommand{\itm}{\item}
\newenvironment{ilist}[1][0]{\renewcommand{\theENUM}{\roman{ENUM}}\renewcommand{\itm}{\addtocounter{ENUM}{1}\item[(\theENUM)]}\begin{itemize}\setcounter{ENUM}{#1}}{\end{itemize}}
\newcommand{\margh}[1]{}
\def\risom{\overset{\sim}{\rightarrow}}
\def\ZZ{{\mathbb Z}}
\def\PP{{\mathbb P}}
\def\cM{{\mathcal M}}
\def\sL{{\mathscr L}}
\def\sO{{\mathscr O}}
\def\fg{{\mathfrak g}}
\def\dv{\operatorname{div}}
\def\Pic{\operatorname{Pic}}
\def\ord{\operatorname{ord}}
\def\MR{\operatorname{MR}}
\def\Sym{\operatorname{Sym}}
\def\md{\operatorname{md}}
\newtheorem{thm}{Theorem}[section]
\newtheorem{prop}[thm]{Proposition}
\newtheorem{lem}[thm]{Lemma}
\newtheorem{cor}[thm]{Corollary}
\newtheorem{conj}[thm]{Conjecture}
\theoremstyle{definition}
\newtheorem{defn}[thm]{Definition}
\newtheorem{ex}[thm]{Example}
\theoremstyle{remark}
\newtheorem{notn}[thm]{Notation}
\newtheorem{rem}[thm]{Remark}
\numberwithin{equation}{section}
\begin{document}
\title{Limit linear series and ranks of multiplication maps}
\author{Fu Liu}
\author{Brian Osserman}
\author{Montserrat Teixidor i Bigas}
\author{Naizhen Zhang}
\begin{abstract} We develop a new technique to study ranks of multiplication maps for linear series via limit linear series
and degenerations to chains of elliptic curves. 
We prove an elementary criterion and apply it to  proving cases of the Maximal Rank Conjecture.
We give a new proof of the case of quadrics, and also treat several families in the case of cubics.
 Our proofs do not require restrictions on direction of approach, so we recover new information on the locus
in the moduli space of curves on which the maximal rank condition fails.
\end{abstract}

\thanks{Fu Liu was partially supported by NSF grant DMS-1265702. Brian 
Osserman was partially supported by a grant from the Simons Foundation 
\#279151.}

\maketitle

\section{Introduction}

The classical Brill-Noether theorem states that if we are given  $g,r,d \geq 0$, a general curve $X$ of genus $g$ carries a linear series
$(\sL,V)$ of rank $r$ and degree $d$ if and only if the quantity
$$\rho:=g-(r+1)(g-d+r)$$
is nonnegative \cite{g-h1}. 
Eisenbud and Harris proved that (at least in characteristic $0$) when $r \geq 3$, 
a general such linear series on $X$ will define an  imbedding of $X$ as a nondegenerate curve of degree $d$ in $\PP^r$  \cite{e-h4}. 
One of the most basic questions one might then ask is: what are the degrees of  the equations defining $X$? 
More precisely, for each $m \geq 2$, what is the dimension of the space of homogeneous polynomials of degree
$m$ vanishing on the image of $X$?
The question is about the dimension of the kernel of the natural restriction map
\begin{equation}\label{eq:restrict}
\Gamma(\PP^r,\sO(m)) \to \Gamma(X,\sL^{\otimes m} ).
\end{equation}
The dimension of the source space is $\binom{r+m}{m}$ while the dimension of the target space is $md+1-g$. 
The \textit{Maximal Rank Conjecture} states that the rank of this map is as large as possible,
 or equivalently, the  kernel of this map is as small as possible.

\begin{conj}\label{conj:mrc}
If $X$  is a generic curve with a generic immersion in $\PP^r$ for any $m\ge 2$, the rank of the restriction map \eqref{eq:restrict} is
$$\min\left\{\binom{r+m}{m},md+1-g\right\},$$
\end{conj}

At least in part, this conjecture goes back to work of Noether in late 1800's,
and of Severi in the early 1900's, but it was stated explicitly by Harris
in 1982, and has received considerable attention since then. Partial results 
are due to Ballico and Ellia \cite{ba3} \cite{ba4} \cite{ba5} 
\cite{b-e1} \cite{b-e2},
Voisin \cite{vo4},
Farkas \cite{fa2},
Teixidor \cite{te9}, 
Larson \cite{la4},
and most recently, Jensen and Payne \cite{j-p2}. 
  These results were in some cases motivated directly by the conjecture, but in other cases by a variety of applications, including to surjectivity of the Wahl  map,
   to higher-rank Brill-Noether theory, and to the birational geometry of moduli spaces of curves. 
Subsequently, Aprodu and Farkas  \cite{a-f1} introduced a \textit{Strong Maximal Rank Conjecture} motivated by applications to moduli spaces of curves  (see 5.4 in \cite{a-f1}).
    Farkas and Ortega  then developed the relationship to higher-rank Brill-Noether theory \cite{f-o1}.
Taken together, the above-mentioned papers have treated Conjecture  \ref{conj:mrc} in the following cases: when $d \geq r+g$; when $r=3$ or $r=4$; when $m=2$; 
when $d$ is sufficiently large relative to $r$ and $m$; and in several additional ranges of cases for $m=3$, including 
many cases with $r=5$. It is also important to note that if  \eqref{eq:restrict} is known to be surjective for a given $m$ and a given
linear series on a given curve, then surjectivity also follows for all larger $m$ (and the same linear series); see the proof of Theorem 1.2 of \cite{j-p2}.
Thus, knowing for instance the $m=2$ case mentioned above, we conclude that for any case $(g,r,d)$ with $\binom{r+2}{2} \geq 2d+1-g$, 
the Maximal Rank Conjecture holds for all $m$.
 After this paper was submitted, Larson proved the full (weak)  maximal rank conjecture \cite{la7}, \cite{la8}.
  Our main results are as follows.

\begin{thm}\label{thm:main} Given $g,r,d$ with $r \geq 3$, $r+g>d$, and $\rho \geq 0$, the Maximal Rank Conjecture \ref{conj:mrc} holds under the  following conditions:
\begin{ilist}
\itm when $m=2$;
\itm when $m=3$, and either $r=3$ with $g \geq 7$, $r=4$ with $g \geq 16$,
or $r=5$ with $g \geq 26$;
\itm when $g \geq (r+1)\left((m+1)^{r-1}-r\right)$;
\itm when $m \geq 3$, and either $g-d+r=1$ with $2r-3 \geq \rho+1$,
or $r+g-d=2$ with $r \geq 4$ and $2r-3 \geq \rho+2$.
\end{ilist}
\end{thm}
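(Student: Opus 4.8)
The plan is to reduce the proof of Theorem~\ref{thm:main} to the elementary combinatorial criterion provided by Theorem~\ref{thm:main-reduction}, and then to verify that criterion in each of the four enumerated families. The first and more structural half of the work is to set up the degeneration: specialize a general curve $X$ of genus $g$ to a chain $X_0$ of $g$ elliptic curves, realize a Brill--Noether general linear series $(\sL,V)$ of rank $r$ and degree $d$ as the smoothing of a limit linear series on $X_0$ (using the Eisenbud--Harris smoothing theorem, together with the refined combinatorial description of limit linear series on genus-$1$ chains coming from \cite{os8} and \cite{os20}), and then bound the rank of the multiplication map \eqref{eq:multiply} below by a rank computed on the central fiber. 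Since rank is lower-semicontinuous in families, any lower bound achieved on $X_0$ (for some choice of limit series, and, if needed, some choice of direction of approach) passes to the general curve; combined with the trivial upper bound $\min\{\binom{r+m}{m}, md+1-g\}$ this yields the maximal rank condition on $X$ and simultaneously the ``moreover'' clause, since it shows $X_0$ lies outside the closure of the failure locus.

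\textbf{The combinatorial core.} On the chain $X_0$, a section of $\sL^{\otimes m}$ is controlled by its multidegree/vanishing data at the nodes, so the multiplication map \eqref{eq:multiply} becomes a map whose rank can be estimated by counting, for each admissible multidegree, how many $m$-fold products of sections of $(\sL,V)$ hit that multidegree with the correct vanishing orders. This is exactly the content that Theorem~\ref{thm:main-reduction} should package into a purely elementary statement --- presumably an inequality asserting that a certain explicitly described set of lattice-point configurations (indexed by the ramification sequences at the nodes, i.e.\ by tuples summing appropriately, with the area/\texttt{LL}-type statistics appearing in the notation macros) has cardinality at least $\min\{\binom{r+m}{m}, md+1-g\}$. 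So Step~1 is to recall the precise statement of Theorem~\ref{thm:main-reduction}; Step~2 is, for each case (i)--(iv), to exhibit the required configurations and count them.

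\textbf{Case-by-case verification.} For (i), $m=2$: here the target of \eqref{eq:multiply} has dimension $2d+1-g$ and the source $\binom{r+2}{2}$, and one must produce enough quadratic monomials in the section-data; I expect this reduces to a clean counting argument giving a new proof of the quadric case, with the genus-$1$-chain refinement coming for free. For (ii), $m=3$ with $r\in\{3,4,5\}$ and $g$ above the stated thresholds: the counting is more delicate because the number of cubic monomials $\binom{r+3}{3}$ is larger, and one must organize the lattice configurations carefully; the thresholds $g\geq 7,16,26$ should emerge precisely as the point where the supply of configurations catches up with $\min\{\binom{r+3}{3},3d+1-g\}$. Case (iii), $g \geq (r+1)((m+1)^{r-1}-r)$, is the ``genus large relative to everything'' regime, where $md+1-g$ is the binding minimum and the chain has enough elliptic components that a greedy/inductive construction of configurations works for all $m$ at once. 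Case (iv) handles the nearly-nonspecial strata $r+g-d \in \{1,2\}$, where the obstruction space is tiny and a direct, almost hands-on count under the hypothesis $2r-3 \geq \rho + (r+g-d)$ suffices.

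\textbf{Expected main obstacle.} The genuinely hard part is case (ii), the cubic case for $r=3,4,5$: unlike the other families it is not a ``clean asymptotic'' regime, so the combinatorial inequality from Theorem~\ref{thm:main-reduction} must be checked essentially sharply, and the bookkeeping of vanishing sequences at the $g-1$ nodes --- choosing the right limit linear series, and possibly the right one-parameter direction of approach --- is where all the real work lies. The secondary difficulty, shared by all cases, is the structural setup: carefully matching the \cite{os8}/\cite{os20} description of limit linear series on elliptic chains with the section-counting so that the semicontinuity argument is valid, in particular handling the subtlety that sections of $\sL^{\otimes m}$ on $X_0$ need not all lift, which is exactly why one works with a lower bound rather than an equality on the central fiber.
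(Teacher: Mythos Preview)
Your overall strategy matches the paper's: Theorem~\ref{thm:main} is deduced by assembling case-specific verifications of the elementary criterion Theorem~\ref{thm:main-reduction} (namely Theorem~\ref{thm:m2} for (i), Corollary~\ref{cor:m3-ranges} for (ii), Proposition~\ref{prop:big-g-inject} for (iii), and Corollary~\ref{cor:surj} for (iv)), each of which exhibits a suitable $(g,r,d)$-sequence $\vec{\delta}$ and multidegree $w$ making $T_w(\vec{\delta})$ expungeable; the ``moreover'' clause comes along for free via the steadiness hypothesis in Theorem~\ref{thm:main-reduction}.

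Two corrections to your plan. First, in case (iii) you have the regimes reversed: for $g$ this large the binding minimum is $\binom{r+m}{m}$, not $md+1-g$, so this is an \emph{injectivity} statement. The paper's construction is not greedy or inductive in $g$; rather it builds a single $(g,r,d)$-sequence so that in one particular column the $a^i_j$ are $a, a+1, a+(m+1), a+(m+1)^2,\dots,a+(m+1)^{r-1}$, forcing all $\binom{r+m}{m}$ values $a^i_{\vec{j}}$ to be distinct, after which Rule~(ii) alone drops every row. Second, and more substantively, your treatment of case (iv) is missing a necessary ingredient. The statement is for \emph{all} $m \geq 3$, but the direct verification (Corollary~\ref{cor:surj}) is carried out only for $m=3$; the paper then invokes the standard fact that surjectivity of the multiplication map for a fixed $(\sL,V)$ at a given $m$ implies surjectivity for all larger $m$. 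Without this reduction your ``hands-on count'' would have to succeed uniformly in $m$, and the paper explicitly remarks that the method of Proposition~\ref{prop:surj-crit} does not extend in that way.
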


Most results about the   (weak) maximal rank conjecture have been obtained by deforming a special curve inside a projective space.
We instead translate the problem into a  question depending on the curve alone (rather than any immersion). 
Using the identification  $\Gamma(\PP^r,\sO(m))=\Sym^m V,$ the map   \eqref{eq:restrict}  can be interpreted in terms of the linear  series  as:
\begin{equation}\label{eq:multiply}\Sym^m V \to \Gamma(X,\sL^{\otimes m}).\end{equation}
In order to prove any given case of the Maximal Rank Conjecture, it is enough to produce a single smooth curve $X$
 for which the space of linear series of given rank and degree
has the expected dimension $\rho$, and a single linear series on $X$ such that \eqref{eq:restrict} has the predicted rank (see Corollary \ref{cor:lls-reduction} for details).
We use the theory of limit linear series studying ranks of multiplication maps by degenerating to a chain of elliptic curves
applying the fundamental smoothing theorem of Eisenbud and Harris \cite{e-h1}
 together with  substantial input from the alternative  approach to limit linear series developed in \cite{os8} and \cite{os20}.
 Previous approaches using limit linear series to study multiplication maps had focused on  injectivity, 
 considering a hypothetical nonzero element of the kernel, and  deriving a contradiction (see for instance \cite{e-h2} and \cite{te9}).
Here, instead of showing that the kernel of the map is small, we prove that the image is large, a strategy also used in the tropical context \cite{j-p1} \cite{j-p2}.

Our approach is relatively self-contained.
In \S \ref{sec:nondegen} we prove some auxiliary results related to elliptic curves with two marked points.
In  section \ref{sec:reddeg}, we reduce the result to the singular curve.
In \S \ref{sec:elem-1} we use limit linear series to prove  some criteria for independence of sections.
In the remaining sections, we apply these results to prove the various cases of Theorem \ref{thm:main}: 
in \S \ref{sec:injective} we make some observations on injectivity including a proof of case (iii); 
in \S \ref{sec:m2} we prove the case $m=2$; in \S \ref{sec:surjective} we make some observations on surjectivity and prove case (iv); 
and finally, in \S \ref{sec:m3} we prove the $m=3$ cases of Theorem \ref{thm:main}.
Our arguments apply over a base field of any characteristic, although 
 to simplify the exposition and to make use of other results in our applications, we assume characteristic $0$.
We do not need our base field to be algebraically closed either, but it will simplify the arguments to assume it is.

Our  method are quite flexible. They can be used  for  studying the Strong Maximal Rank Conjecture (see \cite{smr}) and even multiplication maps for arbitrary linear series.
While we will not need it in this paper,  it potentially allows us to take into  account direction of approach to the special  curve $X_0$ (see Remark \ref{dirap}). 
In particular, it could be used  to prove maximal rank near a curve that itself does not satisfy the maximal rank condition.

\subsection*{Acknowledgements} We would like to thank Gavril Farkas for bringing to our attention the difficulties in applying the usual Eisenbud-Harris approach to multiplication maps
 in the situation of the Maximal Rank Conjecture.

\section{Nondegeneracy on twice-marked elliptic curves}\label{sec:nondegen}

In this section, we study maps from  elliptic curves to projective space determined by comparing values of certain tuples of sections of a line bundle at  points
$Q$ and $P$, as we let the point $Q$ vary. 
We describe these maps explicitly, showing in the process that they are morphisms, and proving that they are non-degenerate in a family of cases of interest for the Maximal Rank Conjecture. 

Given a non-singular genus-$1$ curve $C$ and distinct $P,Q$ on $C$, and integers $c,d \geq 0$, let $\sL=\sO_C(cP+(d-c)Q)$. 
Then for any $a,b \geq 0$ with $a+b=d-1$, there is a  section of $\sL$ unique up to scaling   vanishing to order at least $a$ at $P$ and at least $b$ at $Q$. 
Thus, we have a uniquely determined point $R$ such that the divisor of the aforementioned section is $aP+bQ+R$.
In particular,   $R=P$ if and only if $Q-P$ is $|a+1-c|$-torsion, and $R=Q$ if and only if $Q-P$ is $|a-c|$-torsion. 
Note that this  makes sense even when $Q=P$ (in which case $R=Q=P$). 
To avoid trivial cases, we will assume that $a \neq c-1$, and $b \neq d-c-1$.

\begin{notn}\label{sit:nondegen} Fix $m \geq 2$, and set positive integers 
\[ c, d,\  a_1,\dots,a_m, \ b_1,\dots,b_m, \ a_1',\dots,a_m', \ b_1',\dots,b_m'\]
 \[\text{ s.t. } a_i+b_i=d-1, a_i-c \neq 0,-1 , a_i'+b_i'=d-1,  a'_i-c \neq 0,-1\  \forall i,\  \sum_i a_i = \sum_i a_i'.\]
 Let $P,Q \in C$  satisfying  $Q-P$ is not $|a_i-c|$- or $|a_i+1-c|$- or $|a'_i -c|$- or $|a_i'+1-c|$-torsion for any $i$, 
Let  $s_i$ be sections with divisors $a_iP+b_i Q +R_i$, and $s_i'$ with divisors $a_i'P+b_i' Q+R_i'$.
 Then, $s = s_1 \otimes \cdots \otimes s_m, s' = s_1' \otimes \cdots \otimes s_m' \in \Gamma(C,\sL^{\otimes m})$ have divisors
$$(\sum_i a_i) P + (\sum_i b_i) Q + R_1 + \dots + R_m,\ (\sum_i a_i') P + (\sum_i b_i') Q + R_1' + \dots + R_m'.$$
As $\sum_i a_i = \sum_i a_i'$,  $R_1+\dots+R_m \sim R_1'+ \dots + R_m'$. 
Let $g$ be the rational function unique up to scaling, such that
$$\dv g = R_1+ \dots + R_m - R_1' - \dots - R_m'.$$
then $g(P)$ and $g(Q)$ are both in $k^\times$.
The ratio $g(Q)/g(P) \in k^\times$ is independent of scaling $g$, so is canonically determined by the choice of $P,Q$ and the discrete data. 
Fix $P$, for a given $Q\in C$, denote by $R_i^Q$, $R_i'^Q$ and $g^Q$ the points and rational function determined as above by $P$ and $Q$.
Let $U$ be the open subset of $C$ consisting of all $Q$ such that $Q-P$ is not $|a_i-c|$- or $|a_i+1-c|$- or $|a'_i -c|$- or $|a_i'+1-c|$-torsion for any $i=1,\dots,m$.

Let $k$ be an integer. Denote by $L_1, \dots L_{k^2}$ the line bundles in  $\Pic^0(C)$  of order a  divisor of $|k|$.
Then, for $X$ a point in $C$,  ${\mathcal O}_C(X)\otimes  L_i$ is a line bundle of degree 1 and therefore can be written as ${\mathcal O}_C(Y_i)$ for a unique $Y_i\in C$.
We will denote $\sum _iY_i$ by $X+T[k]$.
\end{notn}

With the notation above, for all $Q \in U$, we get a $g^Q(Q)/g^Q(P) \in k^\times$. The main  technical result of this section is then the following characterization of the resulting function.

\begin{lem}\label{lem:nondegen} With notations as in  \ref{sit:nondegen}, the  function $f:U \to k^\times$ given by $Q \mapsto g^Q(Q)/g^Q(P)$ determines a rational function on $C$. 
We then have
\begin{equation*} \dv f  =
\sum_{i=1}^m ((P+T[|a_i-c|])-(P+T[|a_i'-c|])-(P+T[|a_i+1-c|])+(P+T[|a_i'+1-c|])),
\end{equation*}
\end{lem}

\begin{proof} Consider the  divisor $\bar R_i$ (respectively $\bar R_i'$) on $C \times C$ consisting of points $(Q,R_i^Q)$ (respectively, $(Q,R_i'^Q)$).
We can regard $\bar R_i$ as the graph of the morphism $C \to C$ sending $Q$ to $P+(a+1-c)(Q-P)$ with $a=a_i$ (respectively  $a=a_i'$ for $\bar R_i'$).
Now, set $Z= \sum_i  (P+T[|a_i+1-c|])$ and $Z'= \sum_i  (P+T[|a_i'+1-c|])$.

Our first claim is that $\bar R_1+\dots+\bar R_m + Z' \times C \sim \bar R_1'+\dots+\bar R'_m + Z \times C$.
The restriction of $\bar R_1+\dots+\bar R_m $ to  any fiber $\{Q\} \times C$ is $R_1^Q+\dots+R_m^Q \sim R_1^{'Q}+\dots+R_m^{'Q}$ 
which in turn is the restriction of $\bar R_1'+\dots+\bar R_m' $ to  the fiber $\{Q\} \times C$.
Hence, $\bar R_1+\dots+\bar R_m - \bar R_1'-\dots-\bar R'_m \sim D \times C$ for some divisor $D$ on $C$.
But we now consider the restriction to $C \times \{P\}$, observing that, by construction,
\begin{equation}\label{eq:z-def}
(\bar R_1+\dots+\bar R_m - \bar R_1'-\dots-\bar R'_m )|_{C \times \{P\}}=(Z - Z') \times \{P\}.
\end{equation}
We  conclude that  $D \sim Z-Z'$, proving our claim.

Now, let $t,t'$ be the sections (unique up to scaling) of $\sO_{C \times C}(\bar R_1+\dots+\bar R_m + C \times Z')$ with divisors
 $\bar R_1+\dots+\bar R_m + C \times Z'$ and $\bar R_1'+\dots+\bar R'_m + C \times Z$ respectively. 
 Our second claim is that there exist choices of $t,t'$ such that the function $f$ is obtained by composing the diagonal map $U \to C \times C$
 with  the rational map $C \times C \dashrightarrow \PP^1_k$ induced by $(t,t')$.
  For $Q \in U$, if we restrict $(t,t')$ to $\{Q\} \times C$, we obtain a rational function with the same zeroes and poles as $g^Q$, and which is hence a valid choice for $g^Q$.
We next observe that if we restrict $(t,t')$ to $C \times \{P\}$, then by \eqref{eq:z-def} after removing base points we have a rational function with no zeroes or poles,
which is thus necessarily constant, equal to some $z \in k^\times$.
Rescaling $t'$ by $z$, we may assume $z=1$, which means that on each $\{Q\} \times C$ for $Q \in U$, the pair $(t,t')$ induces a choice of $g^Q$ with $g^Q(P)=1$. 
Thus for the given $(t,t')$, $g^Q(Q)/g^Q(P)$ is obtained simply by evaluation at $(Q,Q)$, which is the same as saying that $f$ is induced as claimed.

It then follows that $f$ is a rational function on $C$, and the desired description of its divisor likewise follows: indeed, the diagonal meets
any fiber $\{Q\} \times C$ transversely, so the last two terms in the formula come directly from the restrictions of $Z \times C$ and $Z' \times C$, respectively. 
In general the diagonal may not meet the graph of the morphism $Q \mapsto P+(a+1-c)(Q-P)$ transversely, but in any case the intersection is always identified with $P+\Pic^0(C)[a-c]$,
which thus yields the first two terms of the asserted formula for $\dv f$, as desired.
\end{proof}

As a sample application of Lemma \ref{lem:nondegen}, we consider when the function $f$ is nonconstant in the case $m=2$.

\begin{cor}\label{cor:nonconst} In the situation of Lemma \ref{lem:nondegen}, assume further that $m=2$.
  Then the function  $f$ is non-constant if and only if  $\{a_1,a_2\} \neq \{a'_1, a'_2\}$ and $a_1+a_2 \neq 2c-1$.
\end{cor}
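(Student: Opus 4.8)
The plan is to specialize the divisor formula from Lemma \ref{lem:nondegen} to the case $m=2$ and read off exactly when it is identically zero. Writing $n_i = |a_i+1-c|$, $n_i' = |a_i'+1-c|$, $\ell_i = |a_i - c|$, $\ell_i' = |a_i'-c|$, the divisor of $f$ is a signed sum of the pulled-back torsion divisors $P + \Pic^0(C)[\,\cdot\,]$ for these eight integers. The function $f$ is constant precisely when $\dv f = 0$, so the whole problem becomes: when does the formal sum $\sum_{i=1}^2 \bigl((P+\Pic^0(C)[\ell_i]) - (P+\Pic^0(C)[\ell_i']) - (P+\Pic^0(C)[n_i]) + (P+\Pic^0(C)[n_i'])\bigr)$ vanish? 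Since translation by $P$ is a bijection on $C$, this is equivalent to the vanishing of $\Pic^0(C)[\ell_1] + \Pic^0(C)[\ell_2] - \Pic^0(C)[\ell_1'] - \Pic^0(C)[\ell_2'] - \Pic^0(C)[n_1] - \Pic^0(C)[n_2] + \Pic^0(C)[n_1'] + \Pic^0(C)[n_2']$ as a divisor on $\Pic^0(C)$.

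First I would record the degree (number of points, counted with the inseparable multiplicities) of $\Pic^0(C)[n]$: it is $n^2$ when $n \neq 0$, and it is the whole curve $C$ — an ``infinite'' divisor — when $n = 0$. The hypothesis $a_i - c \neq 0, -1$ from Situation \ref{sit:nondegen} forces $\ell_i, n_i \geq 1$ and likewise for the primed versions, so all eight integers are strictly positive and every torsion divisor appearing is finite of degree equal to the square of the relevant integer. (This is where I use that $C$ is not supersingular only to the extent needed; in fact the non-supersingular hypothesis ensures the $2$-torsion is nontrivial, which will matter for one borderline configuration below.) The key combinatorial fact I would next establish is that the divisors $\Pic^0(C)[n]$ for distinct positive integers $n$ are ``independent'' enough that a signed $\ZZ$-combination of them vanishes only when it vanishes formally — more precisely, $\Pic^0(C)[m] \subseteq \Pic^0(C)[n]$ iff $m \mid n$, and two such divisors sharing a point must have one contained in the other. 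So to get cancellation in the above signed sum, the multisets $\{\ell_1,\ell_2\} \cup \{n_1',n_2'\}$ and $\{\ell_1',\ell_2'\} \cup \{n_1,n_2\}$ must essentially coincide after accounting for nesting; a clean sufficient-and-necessary bookkeeping argument then shows $\dv f = 0$ iff $\{\ell_1,\ell_2\} = \{\ell_1',\ell_2'\}$ and $\{n_1,n_2\} = \{n_1',n_2'\}$, because the $n$-family and $\ell$-family enter with opposite patterns and there is no room to trade between them once everything is a positive integer of bounded size.

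The last step is to translate the condition $\{\ell_1,\ell_2\} = \{\ell_1',\ell_2'\}$ and $\{n_1,n_2\} = \{n_1',n_2'\}$ back into the stated criterion $\{a_1,a_2\} = \{a_1',a_2'\}$ or $a_1 + a_2 = 2c-1$. We have $\ell_i = |a_i - c|$ and $n_i = |a_i + 1 - c| = |a_i - c + 1|$, i.e. $(\ell_i, n_i)$ is determined by $a_i - c$ via $x \mapsto (|x|, |x+1|)$. This map is injective on $\ZZ \setminus \{0, -1\}$ except that it identifies $x$ with $-1-x$ (since $|{-1-x}| = |x+1|$ and $|{-1-x}+1| = |{-x}| = |x|$ — one must check the unordered-pair matching carefully here, as $\{\ell,n\}$ as an unordered pair is what is forced equal when the two primed integers happen to interleave with the unprimed ones). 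Therefore $\{(\ell_1,n_1),(\ell_2,n_2)\} = \{(\ell_1',n_1'),(\ell_2',n_2')\}$ as matched pairs holds iff $\{a_1 - c, a_2 - c\}$ equals $\{a_1' - c, a_2' - c\}$ or $\{-1-(a_1'-c), -1-(a_2'-c)\}$, and combining with $a_1 + a_2 = a_1' + a_2'$ (which holds by the standing assumption $\sum a_i = \sum a_i'$ in Situation \ref{sit:nondegen}), the second alternative forces $a_1 + a_2 = 2c - 1$. So $f$ is constant iff $\{a_1,a_2\} = \{a_1',a_2'\}$ or $a_1+a_2 = 2c-1$, which is the negation of the asserted criterion. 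The main obstacle I anticipate is the middle step: carefully ruling out ``accidental'' cancellations in the eight-term signed sum of torsion divisors, in particular handling the case where a smaller torsion divisor sits inside a larger one (so a point can receive contributions from several terms) and the borderline low-order cases $\ell_i$ or $n_i$ equal to $1$ or $2$ — this is exactly where non-supersingularity of $C$ is needed to guarantee $\Pic^0(C)[2] \neq 0$ so that a $[2]$-term cannot silently vanish.
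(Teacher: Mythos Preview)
Your overall plan---reduce to $\operatorname{div} f = 0$ via Lemma~\ref{lem:nondegen} and then analyze the resulting signed sum of torsion divisors---is the same as the paper's. But the intermediate characterization you state is false, not merely unproved.

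You claim that $\operatorname{div} f = 0$ if and only if $\{\ell_1,\ell_2\}=\{\ell_1',\ell_2'\}$ and $\{n_1,n_2\}=\{n_1',n_2'\}$. Take $c=5$, $(a_1,a_2)=(2,7)$, $(a_1',a_2')=(3,6)$. Then $a_1+a_2=9=2c-1$, so by the corollary $f$ is constant and $\operatorname{div} f=0$; but $\{\ell_1,\ell_2\}=\{|{-3}|,|2|\}=\{2,3\}$ while $\{\ell_1',\ell_2'\}=\{|{-2}|,|1|\}=\{1,2\}$, so your multiset condition fails. The same example shows your final ``translation'' step is false as written: you assert the separated-multiset condition is equivalent to $\{a_1,a_2\}=\{a_1',a_2'\}$ or $a_1+a_2=2c-1$, but here the latter holds and the former does not. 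The underlying reason is exactly the one you tried to rule out: when $a_1+a_2=2c-1$ one has $\ell_i=n_{3-i}$, so the $\ell$-family and $n$-family collapse together and trading between them is forced, not forbidden. (Separately, the assertion ``two such divisors sharing a point must have one contained in the other'' is false---every $\operatorname{Pic}^0(C)[n]$ contains the origin; the correct statement is $\operatorname{Pic}^0(C)[m]\cap\operatorname{Pic}^0(C)[n]=\operatorname{Pic}^0(C)[\gcd(m,n)]$.)

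The paper avoids all of this. After checking the easy direction (either condition failing gives $D=0$ by inspection), it assumes $\{a_1,a_2\}\ne\{a_1',a_2'\}$ and $a_1+a_2\ne 2c-1$, orders so that $a_1<a_1'\le a_2'<a_2$, and then simply identifies the \emph{unique maximum} among the eight integers $\ell_i,\ell_i',n_i,n_i'$: it is $n_2=a_2+1-c$ if $a_1+a_2>2c-1$, and $\ell_1=c-a_1$ if $a_1+a_2<2c-1$. Since $C$ is not supersingular, there is a point of that exact order, and at such a point only one term of $D$ contributes, so $D\ne 0$. This ``find the dominant term'' argument is both shorter and immune to the cross-family cancellations that break your intermediate claim.
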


\begin{proof} By Lemma \ref{lem:nondegen}, we have that $f$ is constant if and only if
\begin{multline*} 
0=D:=(P+T(|a_1-c|]))-(P+T[|a_1'-c|])-(P+T[|a_1+1-c|])+T[|a_1'+1-c|]
\\+(P+T(|a_2-c|])-(P+T[|a_2'-c|]-(P+T[|a_2+1-c|])+T[|a_2'+1-c|])).
\end{multline*}
Without loss of generality, assume that $a_1 \leq a_2$ and $a_1' \leq a'_2$.
Because we have assumed $a_1+a_2=a_1'+a_2'$, we have $\{a_1,a_2\}=\{a_1',a_2'\}$ if and only if $a_1=a_1'$. 
Obviously, in this case, we have $D=0$.
 Similarly, if $a_1+a_2=2c-1=a_1'+a_2'$, then  $a_1-c=-(a_2+1-c)$, $a_2-c=-(a_1+1-c)$, and similarly for the $a_i'$, giving $D=0$ again.
  On the other hand, if $a_1 \neq a_1'$, we may assume without loss of generality that $a_1 <a_1'$, so that $a_2>a_2'$. 
  In  particular, we have $a_1<a_2$.

If $a_1+a_2 > 2c-1$,  then $a_2+1-c>c-a_1$, but also $a_2+1-c> a_1-c$, so $a_2+1-c>|a_1-c|\geq 0$. 
We likewise have  $a_2'+1-c>|a_1'-c|\geq 0$, but $a_2+1-c>a_2'+1-c$. We  conclude that $|a_2+1-c|$ is the (unique) maximal term appearing in the expression for $D$.
This implies that $f$ has poles at those points, and hence is nonconstant.

Similarly, if $a_1+a_2 < 2c-1$, we see that $|a_1-c|=c-a_1$ is the maximal term appearing in the expression for $D$, implying that $f$ has zeroes and is nonconstant.
\end{proof}

We now consider morphisms to higher-dimensional projective spaces.

\begin{notn}\label{sit:nondegen-2} Fix $m \geq 2$, and $\ell \geq 1$, and for 
$j=0,\dots,\ell$, set numbers $a_1^j,\dots,a_m^j$, $b_1^j,\dots,b_m^j$ 
satisfying:
\[ a_i^j+b_i^j=d-1, a_i^j - c \neq 0,-1 \forall i,j,\ \sum_i a_i^j\text{ is independent of }j.\]
There are sections $s_i^j$ with divisors $a_i^jP+b_i^j Q +R_i^j$, and  forming tensor products yields sections
$s^j = s_1^j \otimes \cdots \otimes s_m^j \in \Gamma(C,\sL^{\otimes m})$, with divisors
$$\left(\sum_i a_i^j\right) P + \left(\sum_i b_i^j\right) Q + R_1^j + \dots + R_m^j,$$
Any two $R_1^j+\dots+R_m^j$ are linearly equivalent.
If $Q-P$ is not $|a_i^j+1-c|$-torsion for any $i,j$, we can normalize the $s^j$, uniquely up to simultaneous scalar, so that their values at $P$ are all the same. 
Then provided that there is some $j$ such that $Q-P$ is not $|a_i^j-c|$-torsion for any $i$, considering  $(s^0(Q),\dots,s^{\ell}(Q))$ gives a well-defined point of $\PP^{\ell}$.
Suppose $P$ is fixed.
 For a given $Q\in C$, denote by $R_i^{j,Q}$ the point determined as above by $P$ and $Q$, and by $f_Q$ the point of $\PP^{\ell}$
determined by $(s^0(Q),\dots,s^{\ell}(Q))$.
Let $U$ be the open subset of $C$ consisting of all $Q$ such that $Q-P$ is not $|a_i^j-c|$- or $|a_i^j+1-c|$-torsion for any $i,j$.
\end{notn}

Our main result is then the following.

\begin{cor}\label{cor:nondegen} The map $U \to \PP^{\ell}$ given by  $Q \mapsto f_Q$ extends to a morphism $f:C \to \PP^{\ell}$.
If further,  all the $a_i^j$ are distinct, $a_1^j+a_2^j \neq 2c-1$, and for each $j$, we have  exactly one $a_i^j$ less than $c$, then $f$ is nondegenerate.
\end{cor}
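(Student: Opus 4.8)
The plan is to reduce the nondegeneracy of $f : C \to \PP^\ell$ to the $m=2$, $\ell=1$ case handled in Corollary \ref{cor:nonconst}, exactly as the statement of Corollary \ref{cor:nondegen} suggests by its hypotheses. A morphism $C \to \PP^\ell$ fails to be nondegenerate precisely when its image lies in a hyperplane, i.e.\ when there is a nonzero linear relation $\sum_j \lambda_j s^j = 0$ among the normalized sections $s^0,\dots,s^\ell$ (as sections of $\sL^{\otimes m}$ restricted along the varying-$Q$ construction), or more precisely when the projective points $f_Q$ all satisfy one fixed linear equation. So first I would show that if $f$ is degenerate, then in particular for every pair of indices $j, j'$ the two-term subfamily $(s^j, s^{j'})$ gives a degenerate (i.e.\ constant) map $U \to \PP^1$ — this is immediate, since a linear relation among $\ell+1$ homogeneous coordinates, combined with the fact that (by the first part of the Corollary) $f$ is a genuine morphism with no common zero, forces, after projecting, that the coordinate functions are pairwise proportional only in degenerate situations; cleaner is to argue the contrapositive: if some pair $(s^j, s^{j'})$ gives a nonconstant map to $\PP^1$, then the composite $C \to \PP^\ell \to \PP^1$ (projection to those two coordinates) is nonconstant, hence $f$ cannot be constant, and a short additional argument (using that the image is a curve, not just nonconstant) upgrades "nonconstant" to "nondegenerate." Actually the right formulation: $f$ is nondegenerate iff the sections $s^0,\dots,s^\ell$, viewed as sections of the line bundle $\cO_C(f^*\cO(1))$ pulled back via the morphism, are linearly independent; and a standard fact is that this holds iff no nontrivial linear combination vanishes identically, which I can test against the function $f$ of Lemma \ref{lem:nondegen} applied to suitable pairs.

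The key step is therefore the following: for the pair of indices $j=0$ and any $j' \ge 1$, form the rational function $f^{(0,j')}$ of Lemma \ref{lem:nondegen} built from the data $a_i^0, a_i^{j'}$ (this is legitimate because $\sum_i a_i^0 = \sum_i a_i^{j'}$ by Situation \ref{sit:nondegen-2}), and apply Corollary \ref{cor:nonconst}. Since $C$ is not supersingular, all the $a_i^{j}$ are distinct, and $a_1^j + a_2^j \neq 2c-1$, the hypotheses of Corollary \ref{cor:nonconst} are met, so each $f^{(0,j')}$ is \emph{nonconstant}. This says the ratio $s^{j'}(Q)/s^0(Q)$ (appropriately normalized so that both agree at $P$) is a nonconstant function of $Q$ — equivalently, the $j'$-th coordinate of $f_Q$ is not a constant multiple of the $0$-th coordinate. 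Hence no two coordinate functions of $f$ are proportional, so the image of $f$ is not contained in any coordinate-type hyperplane through the relevant vertex; but more work is needed to rule out \emph{all} hyperplanes.

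To finish, I would use the extra hypothesis that for each $j$ exactly one $a_i^j$ is less than $c$, together with a closer look at the divisor formula of Lemma \ref{lem:nondegen}, to pin down the zeros and poles of each $f^{(0,j')}$ precisely and show that the functions $1, f^{(0,1)}, \dots, f^{(0,\ell)}$ are linearly independent over $k$. Concretely: by the divisor computation in the proof of Corollary \ref{cor:nonconst}, when $a_1^j + a_2^j > 2c-1$ the pole of $f^{(0,j')}$ of maximal order sits at the points of exact order $|a_2^{j'}+1-c|$ (with the analogous statement for zeros when $a_1^j+a_2^j < 2c-1$); the "exactly one $a_i^j < c$" hypothesis is what makes these torsion orders genuinely depend on $j$, so the functions $f^{(0,j')}$ have poles (or zeros) at mutually distinct torsion points. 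A linear combination $\sum_{j'} \lambda_{j'} f^{(0,j')}$ with not all $\lambda_{j'}$ zero would then have a pole (resp.\ zero) of controlled order that cannot cancel, contradicting constancy; this gives linear independence of the $s^j$ and hence nondegeneracy of $f$. The main obstacle I anticipate is precisely this last bookkeeping: organizing the torsion-order data across $j$ so that no cancellation among the $\lambda_{j'} f^{(0,j')}$ is possible — one has to be careful because different pairs $(j', j'')$ could a priori produce the same torsion order if the $a_i^j$ are not sufficiently spread out, and it is the "one $a_i^j < c$ per $j$" and "all $a_i^j$ distinct" hypotheses (across all $i$ and $j$ jointly) that must be leveraged to preclude this.
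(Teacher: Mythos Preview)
Your plan is correct and matches the paper's approach: write $f$ in affine coordinates as $(f_0,\dots,f_{\ell-1},1)$ where each $f_j$ is the rational function of Lemma~\ref{lem:nondegen} built from a pair of the $s^j$, and prove linear independence of $1,f_0,\dots,f_{\ell-1}$ by examining torsion orders of poles.

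The one organizational point where the paper is sharper than your sketch: rather than comparing each $s^{j'}$ to $s^0$, the paper first reorders the indices so that
\[
a_1^0 < a_1^1 < \cdots < a_1^\ell < c < a_2^\ell < \cdots < a_2^0,
\]
and then takes $s^\ell$ as the reference section. Setting $N_j=\max(|a_1^j+1-c|,|a_2^j+1-c|)$, one checks that $f_j$ has an honest pole at points of order exactly $N_j$ (the possible cancelling zeros all sit at strictly smaller torsion orders), while for every $j'>j$ all poles of $f_{j'}$ lie at torsion of order strictly less than $N_j$. This yields a triangular argument: no nontrivial combination $\lambda_0 f_0+\cdots+\lambda_{\ell-1} f_{\ell-1}$ can be constant, since the first nonzero $\lambda_j$ contributes a pole at exact $N_j$-torsion that nothing later can cancel.

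Your phrase ``mutually distinct torsion points'' is not quite right, since the torsion subgroups are nested; the paper's formulation in terms of points of \emph{exact} order $N_j$, with the $N_j$ strictly decreasing, is what makes the bookkeeping you anticipated go through cleanly. Comparing to $s^\ell$ rather than $s^0$ is what keeps the reference terms $|a_i^\ell\pm 1 - c|$ small and out of the way.
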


\begin{proof} Indeed, we can view our map as being given by $(f_0,\dots,f_{\ell-1},1)$, where $f_j$ is the rational function constructed 
in Lemma \ref{lem:nondegen} from the sections $s^j,s^{\ell}$. 
We thus conclude immediately that our map extends to a morphism.
 Moreover, nondegeneracy is equivalent to linear  independence of the rational functions $f_0,\dots,f_{\ell-1},1$, whose zeroes and poles we have completely described.

Now, suppose we have the hypotheses for the nondegeneracy statement.
We may also without loss of generality reorder our data so that
$$a_1^0 < a_1^1 < \dots < a_1^{\ell} < c < a_2^{\ell} < a_2^{{\ell}-1} <  \dots < a_2^0.$$
Then we claim that for each $j<\ell$, if we set $N_j=\max(|a_1^j+1-c|,|a_2^j+1-c|)$, then $f_j$ has poles at the  strict $N_j$-torsion points of $C$, while none of $f_{j+1},\dots,f_{\ell-1}$ do.
The desired linear independence follows.

For the first assertion, we have to see that the zeroes at the $|a_i^j-c|$-torsion and $|a_i^{\ell}+1-c|$-torsion cannot cancel the poles at the $N_j$-torsion. 
Note that $N_j \geq a_2^j+1-c \geq 3$.
Certainly, we have $|a_2^j-c|=a_2^j-c<N_j$,  $|a_2^{\ell}+1-c|=a_2^{\ell}+1-c<N_j$, and  $|a_1^{\ell}+1-c|=c-1-a_l^{\ell}<c-1-a_1^j \leq N_j$, so there is no problem with these. 
Finally, as $|a_1^j-c|$ is relatively prime to $|a_1^j+1-c|$, so if $N_j=|a_1^j+1-c|$, the poles at the $N_j$-torsion cannot be cancelled by the zeroes at the $|a_1^j-c|$-torsion.
But if $N_j > |a_2^j+1-c|$, we must have $|a_1^j-c|-1=|a_1^j+1-c|<N_j$, and we cannot have $|a_1^j-c|=N_j$ because $a_1^j+a_2^j \neq 2c-1$, so we must have $|a^1_j-c|<N_j$, 
and again the  poles cannot be cancelled.

For the second assertion, choose $j'>j$; then $f_{j'}$ has potential poles at the $|a_i^{j'}+1-c|$-torsion and the $|a_i^{\ell}-c|$-torsion. 
But as above, we see that $|a_i^{j'}+1-c|<N_j$ and $|a_i^{\ell}-c|<N_j$ for $i=1,2$, so $f_{j'}$ cannot have poles at the strict $N_j$-torsion, as desired.

\end{proof}

\section{Reduction to the nodal curve}\label{sec:reddeg}

We begin by discussing generalities on the behavior of multiplication maps under degenerations, and the relationship to limit linear series. 
We remark that in order to prove any given case of the Maximal Rank Conjecture, it is enough to produce a single smooth curve $X$ 
for which the space of linear series of given rank and degree has the expected dimension $\rho$, and a single linear series on $X$ such that 
$\Gamma(\PP^r,\sO(m)) \to \Gamma(X,\sL^{\otimes m} )$ has the predicted rank. 
Indeed, while for small $m$ and $d$ the dimension of $\Gamma(X,\sL^{\otimes m} )$ may vary as 
$X$ and $\sL$ vary, if we use the usual trick of twisting up by a  sufficiently ample divisor on $X$, we can re-express the maximal rank condition in arbitrary families as a determinantal condition.
We conclude that over any family of smooth curves, satisfying the maximal rank condition is an open condition in the relative moduli space of linear series.
 Standard dimension arguments imply that this moduli space is open over the base at any point which has fiber dimension $\rho$, 
 proving that under the stated hypotheses, all nearby curves contain a nonempty open subset of linear series satisfying the maximal rank  condition. 
For $\rho \geq 1$, it follows that we have an  open family of curves for which a dense open subset of linear series satisfies the maximal rank  condition
 (note that the initial curve did not need to be Petri general). 
For $\rho=0$, we instead apply the monodromy theorem of Eisenbud and Harris \cite{e-h7} 
to conclude that we have an open family of curves for which  every linear series satisfies the maximal rank condition.

The next step will be to reduce the problem to the case of a singular curve.
Specifically, we will degenerate to a chain of elliptic curves as defined below:

\begin{notn}\label{defX0}
In this section  $X_0$ will  be a curve of compact type  obtained as follows $X_0=Z_1 \cup \dots \cup Z_n$ is a chain of curves, 
$g$ of which have genus one and the rest are rational  and where $Q_i$ on $Z_i$ is glued  to $P_{i+1}$ on $Z_{i+1}$ for $i=1,\dots,n-1$.
In addition, we will assume that $P_i-Q_i$ is not $\ell$-torsion for any $\ell \leq d$.
\end{notn}
We recall the definition  of limit linear series in this context.
\begin{defn}\label{defn:lls}  With the above notation,  a \textbf{limit linear series} of rank $r$ and degree $d$ on $X_0$
is a $n$-uple $(\sL^i,V^i)_{(i=1,\dots, n)}$ of linear series of rank $r$ and degree $d$ on the components $Z_i$ of $X_0$ satisfying the following condition: 
 let  $a^{i}_0<\dots<a^{i}_r$ and  $b^{i}_0>\dots>b^{i}_r$ be the vanishing sequences of $(\sL^i,V^i)$ at $P_i, Q_i$, respectively. 
Then we require that
$$a^{i+1}_j+b^{i}_{j} \geq d \quad \quad \text{ for } j=0,\dots,r.$$

We say that a limit linear series is \textbf{refined} if the above inequality is an equality for all $i$ and $j$.
\end{defn}

 We make choices of line bundles and sections with support on certain components:

\begin{defn}\label{sit:lls-fiber}  With the notations above, for $i=1,\dots,n$ let  $Z'_i$ be the closure of $X_0 \smallsetminus Z_i$. 
Define a line bundle  on $X_0$ by 
 \[ \sO^i =\begin{cases} \sO_{Z_i}(-(Z_i' \cap Z_i)) &\text{ on  } Z_i \\
                                 \sO_{Z'_i}(Z_i\cap Z'_i)    &\text{ on  }  Z' _i \end{cases}\]
Choose  sections $\sigma_i \in \Gamma(X_0,\sO^i)$ which vanish precisely on $Z_i$
 and choose an isomorphism  $\theta:\bigotimes_{i=1,\dots,n} \sO^i \risom \sO_{X_0}$.
\end{defn}

\begin{rem}\label{dirap}
As  $X_0$ is of compact type, each $\sO^i$ is unique up to isomorphism but in general $\sigma_i$ is not unique up to scaling: 
indeed, for  $i=2,\dots, n-1,\ X_0 \smallsetminus Z_i$ is disconnected, then $\sigma_i$ may be scaled independently on each connected component. 
On the other hand, a family induces a choice of $\sigma_i$ (see Proposition \ref{prop:twistors-family}) 
This is potentially useful as it is one way in which direction of approach could be incorporated into our analysis.
\end{rem}

As in   \cite{os20}, we consider  line bundles  of all possible multidegrees (and total degree $d'$) on the reducible curve  and construct maps between them.
Starting with  a limit linear series  $(\sL^i,V^i)_{i=1,\dots, n}$, choose a `base component' $Z_{i_0}$ of $X_0$.
Let $\omega_0$ be the multidegree assigning degree $d$ to $Z_{i_0}$ and degree $0$ to every other component of $X_0$, 
Define $\sL_{\omega_0}$ as the line bundle obtained by gluing the line bundles $\sL^{i_0}$ on $Z_{i_0}$ and $\sL^i(-dQ_i), \ i< i_0, \sL^i(-dP_i), \ i> i_0 $ on $Z_i$. 

Given an arbitrary multidegree $\omega$, there is a unique collection of  nonnegative integers $a_i, i=1,\dots, n$  such that 
at least one $a_i$ is equal to $0$, and such that  $\bigotimes_{i} (\sO^i)^{\otimes a_i}$ has multidegree $\omega-\omega_0$.
Then set 
$$\sL_{\omega}= \sL_{\omega_0} \otimes \left(\bigotimes_i (\sO^{i})^{\otimes a_i}\right).$$

Given another multidegree $\omega'$, if $\bigotimes_{i} (\sO^i)^{\otimes a'_i}$ has multidegree $\omega'-\omega_0$,
 we get a morphism $\sL_{\omega'} \to \sL_{\omega}$ as follows: let  $b = \max_{v} (a'_i - a_i)$, and for each $Z_i$, set 
$c_i=a_i-a'_i+b$. Then all $c_i$ are nonnegative with at least one equal to $0$, and
$$\sL_{\omega}\cong \sL_{\omega'} \otimes \left(\bigotimes_i (\sO^{i})^{\otimes c_i}\right).$$
More precisely, note that since the total degree of both ${\omega}, {\omega}', {\omega}_0$   is the same, $\sum_ia_i=0=\sum_ia'_i$ therefore $b \geq 0$. Then, 
$$\sL_{\omega}\otimes \left(\bigotimes_i (\sO^{i})^{b}\right)=\sL_{\omega'} \otimes \left(\bigotimes_i (\sO^{i})^{\otimes c_i}\right),$$
so we obtain an induced morphism $\sL_{\omega'} \to \sL_{\omega}$ 
from the appropriate tensor product of the $\sigma_i$, together with $\theta^{\otimes b}$. 
This morphism vanishes precisely on the components $Z_i$ of $X_0$ for which $c_i>0$.

Finally, we note that we have restriction maps as follows: given a component $Z_i$, 
let $\omega_i$ be the multidegree having degree $d$ on $Z_i$ and degree $0$ on all other components. 
Then for any multidegree $\omega$, we obtain a morphism $\sL_{\omega} \to \sL^i$, unique up to scalar,
by composing our constructed morphism $\sL_{\omega} \to \sL_{\omega_i}$ with the restriction map $\sL_{\omega_i}|_{Z_v} \risom \sL^i$.
Depending on the choice of $\omega$, this restriction map may vanish uniformly, but this will not happen in most cases of interest (see Proposition \ref{prop:zeroing-pattern}).

It is often useful to consider an alternative encoding of multidegrees as follows: 

\begin{notn}\label{notn:md} Given a tuple $c=(c_2,\dots,c_{n})$ of integers and  a total degree $d'$ (which will be equal to $d$ or $md$ in our situation),
 we obtain a unique multidegree $w_{d'}(c)$ by setting the degree to $c_2$ on $Z_1$, to $c_{i+1}-c_{i}$ on $Z_i$ for $1<i<n$, and to $d'-c_{n}$ on $Z_n$. We write $w(c)$ for $w_{d'}(c)$ where the total degree is fixed within the context. 
\end{notn}

Given a linear series with line bundles  $\sL^i$ on $Z_i$ of degree $d'$, we obtain the line bundle $\sL_{w(c)}$ by gluing together the following:
\begin{itemize}
\item $\sL^1(-(d'-c_2) Q_1)$ on $Z_1$; 
\item $\sL^i(-c_i P_i-(d'-c_{i+1}) Q_i)$ on $Z_i$ for $1<i<n$; 
\item and $\sL^n(-c_{n} P_n)$ on $Z_n$.
\end{itemize}

We describe the maps between different multidegrees as follows.

\begin{prop}\label{prop:zeroing-pattern} Given $c=(c_2,\dots,c_{n})$ and $c'=(c'_2,\dots,c'_{n})$ in $\ZZ^{n-1}$,
for any choice of line bundle $\sL_{\omega_0}$ the natural map $\sL_{w(c')}\to \sL_{w(c)}$ vanishes on a given $Z_i$ if and only if $\epsilon^i_{w',w}=0$
where $\epsilon^i_{w',w}$ is defined as 
$$\epsilon^i_{w',w}=\begin{cases} 0: & \sum_{j=i+1}^{n} (c'_j- c_j) > \min _{1 \leq i' \leq n} \sum_{j=i'+1}^{n} (c'_j-c_j) \\1 : & \text{ otherwise.}\end{cases}$$
In particular, as long as $0 \leq c'_i \leq d'$ for $i=2,\dots,n$ none of the restriction maps $\sL_{w(c')} \to \sL^i$ vanish uniformly.
\end{prop}
\begin{proof} Observe that for any $i \leq n-1$, the multidegree of $\sO^{1,i}:=\bigotimes_{i'=1}^i \sO^{i'}$ is zero on all components except 
$Z_i$ and $Z_{i+1}$; it is $-1$ on $Z_{i}$ and $1$ on $Z_{i+1}$. 
In the notation introduced above with $d'=0$, it is $w(c'')$ where $c''=(c''_2,\dots,c''_n)$ with all $c''_{i'}$ equal to $0$ except that $c''_{i+1}=-1$. 
We can go from $\sL_{w(c')}$ to $\sL_{w(c)}$, by first tensoring $\left(\sO^{1,n-1}\right)^{\otimes c'_n-c_n}$ to get the desired degree on $Z_n$,
 then by $\left(\sO^{1,n-2}\right)^{\otimes c'_{n-1}-c_{n-1}}$ to get the desired degree on $Z_{n-1}$, and so forth.
Thus, we conclude that 
$$\sL_{w(c)}\cong \sL_{w(c')} \bigotimes_{i=1}^{n-1} \left(\sO^{1,i}\right) ^{\otimes c'_{i+1}-c_{i+1}} = \sL_{w(c')} \bigotimes_{i=1}^{n-1} 
\left(\sO^{i}\right) ^{\otimes \sum_{j=i+1}^n c'_j-c_j}.$$
If we set $M=\min_{1 \leq i \leq n} \sum_{j=i+1}^n c'_j-c_j$, then we have
$M \leq 0$ by considering $i=n$, and we can write
$$\sL_{w(c)}\cong  \sL_{w(c')} \bigotimes_{i=1}^{n}  \left(\sO^{i}\right) ^{\otimes (\sum_{j=i+1}^n c'_j-c_j)-M},$$
with every tensor exponent nonnegative. 
Then the morphism $\sL_{w(c')} \to \sL_{w(c)}$ vanishes precisely where the tensor exponents are strictly positive, which is the definition of having
$\epsilon^i_{w',w}=0$.

For the second assertion, the $w$ yielding multidegree concentrated on $Z_i$ is given by $(c_2,\dots,c_n)$ with $c_{i'}=0$ for $i' \leq i$ and $c_{i'}=d'$ for $i' > i$. 
Thus, if $0 \leq c'_{i'} \leq d'$ for all $i'$, we have that $c'_{i'}-c_{i'} \leq 0$ for $i'>i$ and $c'_{i'}-c_{i'} \geq 0$ for $i' \leq i$, 
so $\sum_{j={i'}+1}^{g} (c'_j-c_j)$ achieves its minimum at $i'=i$, and hence $\epsilon^i_{w',w}=1$ in this case.
\end{proof}
\begin{defn}\label{defsteady}
We say that $(w(c'),w(c))$ is \textbf{steady} if there exists $i$ such that 
\[ \text{ for }  j< i, c_j\le c'_j,  \text{ for }   j\ge i, c'_j\le c_j  \]
\end{defn}. 

\begin{rem}\label{rem:zeroing-pattern}
For a steady pair, the definition of the $\epsilon$ and the above proposition are much easier to interpret.
We observe that if for some $i$ we have $c'_i<c_i$, then the map from the multidegree determined by $w'=w(c')$ to the multidegree determined by $w=w(c)$ should vanish
identically on $Z_i$, since we are twisting down more at $P_i$ in the latter. 
Indeed, in this case we have
$$\sum_{j=i+1}^{g}(c'_j-c_j)> \sum_{j=i}^{g}(c'_j-c_j)\geq  \min _{1 \leq i \leq g} \sum_{j=i+1}^{g} (c'_j-c_j)=M,$$
so $\epsilon^i_{w',w}=0$, as we knew from Proposition \ref{prop:zeroing-pattern}.
Similarly, if $d-c'_i < d-c_i$, considering twists by $Q_{i-1}$ we should have vanishing on $Z_{i-1}$, and we see that since $c_i<c'_i$, we have
$$\sum_{j=i}^{g}(c'_j-c_j)> \sum_{j=i+1}^{g}(c'_j-c_j)\geq  \min _{1 \leq i \leq g} \sum_{j=i+1}^{g} (c'_j-c_j)=M,$$
so $\epsilon^{i-1}_{w',w}=0$, again as expected. We conclude that If $c'_i<c_i$ or $d-c'_{i+1}<d-c_{i+1}$, then necessarily $\epsilon^i_{w',w}=0$. 

The converse doesn't hold in general, but it does hold  when the signs of $c'_i-c_i$ are weakly decreasing, so that
there is never a $0$ before a positive number or a negative number before a nonnegative number. 
In this situation, if $c'_i-c_i$ is never $0$, the minimum $M$ occurs at the unique $i$ such that
$c'_i>c_i$ and $c'_{i+1}<c_{i+1}$ (or at $i=1$ if $c'_{i+1}<c_{i+1}$ for all $i$, and at $i=g$ if $c'_i>c_i$ for all $i$).
If $c'_i-c_i=0$ for some $i$, the minimum $M$ occurs for the $i$ such that that $c'_i-c_i=0$ or $c'_{i+1}-c_{i+1}=0$.
In both cases, these are precisely  the $i$ such that $c'_i \geq c_i$ and $d-c'_{i+1} \geq d-c_{i+1}$.
 Moreover, in this situation,  the $i$ for which $\epsilon^i_{w',w}=1$ are contiguous.
\end{rem}

When we defined linear series, we looked at the orders of vanishing at the nodes 
$a^{i}_0<\dots<a^{i}_r$ and  $b^{i}_0>\dots>b^{i}_r$  of  the sections in $(\sL^i,V^i)$ at $P_i, Q_i$, respectively. 
In general, a set of sections will give the orders of vanishing at $P_i$ and a different set will give the vanishing at $Q_i$.
It will be useful when on each component $Z_2,\dots, Z_{n-1}$, there is  a set of sections that can be used at both points :
\begin{defn}\label{defn:chain-adaptable} A limit linear series $(\sL^i,V^i)$ on $X_0$ is \textbf{chain-adaptable} if, for $i=2,\dots,n-1$,
there exist sections $s^i_0,\dots,s^i_r$ in $V^i$ such that
\[ \ord_{P_i} s^i_0< \ord_{P_i} s^i_1<\dots<\ord_{P_i} s^i_r, \ \ord_{Q_i} s^i_0> \ord_{Q_i} s^i_1>\dots>\ord_{Q_i} s^i_r\]
 recovers the vanishing sequence of $V^i$ at $P_i, Q_i$ respectively.
\end{defn}
From  Propositions 5.2.3 and 5.2.6 of \cite{os20} (see also \cite{lstrop} sec 3), we see that for chain-adaptable limit linear series there exist global sections $s_0,...,s_r$ (in different multidegrees) on $X_0$ restricting to $s^i_j$ on $Z_i$ for all $i,j$:
\begin{prop}\label{prop:chain-adaptable} Let $(\sL^i,V^i)$ be a chain-adaptable limit linear series, with $s^i_j$ as in the definition.
Choose also 
\[ s^1_0,\dots,s^1_r\in V^1,  \ \ s^n_0,\dots,s^n_r \in V^n\]
satisfying 
\[ \ord_{Q_1} s^1_r< \ord_{Q_1} s^1_{r-1}<\dots<\ord_{Q_1} s^1_0,\ \  \ord_{P_n} s^n_0< \ord_{P_n} s^n_1<\dots<\ord_{P_n} s^n_r.\]
For $j=0,\dots,r$, set $c_j=(\ord_{P_2} s^2_j,\dots,\ord_{P_n} s^n_j), w_j=w(c_j)$.

Then for $j=0,\dots,r$, there exists $s_j \in \Gamma(X_0,\sL_{w_j})$ such that for each $i$, we have that $s_j|_{Z_i}$ agrees with $s^i_j$ up to scalar.
Moreover, for each $w_j$, the subspace of $\Gamma(X_0,\sL_{w_j})$  consisting of sections restricting to $V^i$ on $Z_i$ for all $i$ 
has dimension precisely $r+1$.
\end{prop}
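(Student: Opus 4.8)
The plan is to prove both assertions component by component and then glue along the chain. Fix $j$ and write $w_j=(c_2,\dots,c_n)$ with $c_i=\ord_{P_i}s^i_j$; recall from Notation~\ref{notn:md} that $\sL_{\md(w_j)}$ is glued from $\sL^1(-(d-c_2)Q_1)$ on $Z_1$, from $\sL^i(-c_iP_i-(d-c_{i+1})Q_i)$ on each interior $Z_i$, and from $\sL^n(-c_nP_n)$ on $Z_n$. The key input is the limit linear series inequality of Definition~\ref{defn:lls}: at each node $Q_{i-1}=P_i$ (for $i=2,\dots,n$), applying it to the splitting sections at that node — the $s^{i-1}_\bullet, s^i_\bullet$ of Definition~\ref{defn:chain-adaptable} for interior components, together with the end sections $s^1_\bullet, s^n_\bullet$ chosen in the statement — yields $\ord_{Q_{i-1}}s^{i-1}_j+\ord_{P_i}s^i_j\ge d$, that is, $\ord_{Q_{i-1}}s^{i-1}_j\ge d-c_i$. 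Since also $\ord_{P_i}s^i_j=c_i$, each $s^i_j$ vanishes enough at both of its nodes to descend to a section $\bar s^i_j$ of $\sL_{\md(w_j)}|_{Z_i}$. Moreover $\bar s^i_j(P_i)\ne 0$ for $1<i\le n$, and — since we only need the refined case, in which the inequality above is an equality — also $\bar s^i_j(Q_i)\ne 0$ for $1\le i<n$.

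To produce $s_j$, I would use that a section of $\sL_{\md(w_j)}$ is a tuple of sections on the $Z_i$ agreeing at the nodes. Starting from $\bar s^1_j$ and moving along the chain, rescale each $\bar s^{i+1}_j$ uniquely so that it agrees with the (already rescaled) $\bar s^i_j$ at the node $Q_i=P_{i+1}$; this is possible because both are nonzero there by the previous paragraph. The resulting $s_j\in\Gamma(X_0,\sL_{\md(w_j)})$ restricts on each $Z_i$ to a nonzero multiple of $\bar s^i_j$, hence to a nonzero multiple of $s^i_j$, giving the first assertion.

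For the dimension statement, let $W_j\subseteq\Gamma(X_0,\sL_{\md(w_j)})$ be the sections restricting into $V^i$ on every $Z_i$. First I would pin down, on each component, the subspace of $V^i$ descending to $\sL_{\md(w_j)}|_{Z_i}$ — namely the sections vanishing to order $\ge c_i$ at $P_i$ and $\ge d-c_{i+1}$ at $Q_i$ (only the $Q_1$ condition on $Z_1$, only the $P_n$ condition on $Z_n$). Reading these off the vanishing sequences carried by the $s^i_\bullet$, and using the refined equalities $\ord_{Q_i}s^i_j=d-c_{i+1}$ and $\ord_{Q_1}s^1_j=d-c_2$, one finds this subspace is $\spn(\bar s^i_j)$ on each interior $Z_i$, is the $(j+1)$-dimensional $\spn(\bar s^1_0,\dots,\bar s^1_j)$ on $Z_1$, and is the $(r+1-j)$-dimensional $\spn(\bar s^n_j,\dots,\bar s^n_r)$ on $Z_n$. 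Now consider, for $n\ge 3$, the map $W_j\to\bigoplus_{1<i<n}\spn(\bar s^i_j)$ given by restriction to the interior components. Since $\bar s^i_j$ is nonzero at both its nodes, the node conditions pin down the scalar used on each interior $Z_{i+1}$ as a fixed nonzero multiple of the one used on $Z_i$, so the image is at most one-dimensional, and it is exactly one-dimensional because $s_j\in W_j$ has nonzero interior part. The kernel consists of the sections of $W_j$ vanishing on all interior components (hence also at $Q_1$ and $P_n$): such a section is determined by its restrictions to $Z_1$ and $Z_n$, which must lie in $\spn(\bar s^1_0,\dots,\bar s^1_{j-1})$ and $\spn(\bar s^n_{j+1},\dots,\bar s^n_r)$ respectively, and conversely any such pair, extended by zero over the interior, gives a valid element of $W_j$. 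So the kernel has dimension $j+(r-j)=r$, and $\dim W_j=r+1$ by rank-nullity. The case $n=2$ is direct: $W_j$ is the fibre product of $\spn(\bar s^1_0,\dots,\bar s^1_j)$ and $\spn(\bar s^n_j,\dots,\bar s^n_r)$ over the one-dimensional fibre of $\sL_{\md(w_j)}$ at the single node, along two surjective evaluation maps, so $\dim W_j=(j+1)+(r+1-j)-1=r+1$; and $n=1$ is trivial.

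The main obstacle I anticipate is the exact value $r+1$: its overall shape is clear, but getting it right requires correctly identifying which sections of each $V^i$ survive the twist — in particular that each interior component contributes only a single line, which is precisely where refinedness enters — and then checking that the node-compatibility conditions reduce the product of these subspaces to dimension exactly $r+1$. Concretely this rests on the two non-degeneracy facts established above: non-vanishing of $\bar s^i_j$ at both nodes of an interior component, and surjectivity of the evaluation maps on $Z_1$ and $Z_n$. In the non-refined case the first can fail and one would argue separately (or reduce to the refined case), but refined is the only situation needed in the sequel.
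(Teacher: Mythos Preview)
Your argument is correct and self-contained. The paper itself does not give a direct proof here: it simply cites Propositions~5.2.3 and~5.2.6 of \cite{os20} and remarks that the rank-$1$ situation is simpler. What you have written is, in effect, that simpler rank-$1$ argument carried out in full, and it matches the expected mechanism (identify the twisted subspace of each $V^i$, then glue along the chain using nonvanishing at the nodes).

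One small point worth flagging. Your dimension count hinges on two nondegeneracy facts: that on each interior $Z_i$ the twisted subspace of $V^i$ is exactly the line $\spn(\bar s^i_j)$, and that $\bar s^i_j$ is nonzero at both adjacent nodes (and similarly for the end components at their single nodes). Both of these use the \emph{refined} equalities $\ord_{Q_i}s^i_j = d-c_{i+1}$, not just the limit-linear-series inequalities. You acknowledge this (``since we only need the refined case''), and indeed the paper treats refinedness as part of chain-adaptability (see the proof of Corollary~\ref{cor:lls-reduction}) and all applications are to refined series; still, since Definition~\ref{defn:chain-adaptable} as written does not state refinedness explicitly, it would be cleaner to either assume it outright or note that the inequality $a^{(P_i)}_j + a^{(Q_i)}_{r-j} \le d$ forced by chain-adaptability on interior components combines with the limit-linear-series inequality to give the needed equalities at interior nodes (and add the refined hypothesis at the two end nodes).
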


We now  consider families of curves  degenerating to $X_0$:

\begin{notn}\label{sit:degen}
We will denote with  $\pi:X \to B$  a flat, proper morphism, with $1$-dimensional fibers, $B$ the spectrum of a discrete valuation ring.
We assume that $X$ is regular, the generic fiber $X_{\eta}$ is smooth, and the special fiber $X_0$ is as before.
 For each $i=1,\dots,n$, let $\hat{\sigma}_i \in \Gamma(X,\sO_X(Z_i))$ be a section vanishing precisely on $Z_i$. 
 Choose an isomorphism
$$\hat{\theta}:\bigotimes_{i =1,\dots, n} \sO_X(Z_i) \risom \sO_X.$$
\end{notn}

Then  $\sO_X(Z_i)$ and $\hat{\sigma}_i$ induce systems of line bundles and sections as we had previously constructed for $\sigma_i$ and we have

\begin{prop}\label{prop:twistors-family} For $i=1,\dots, n$, $\sO_X(Z_i)|_{X_0} \cong \sO^i$, and $\hat{\sigma}_i|_{X_0}$ is a valid  choice of $\sigma_i$. 
Similarly, $\hat{\theta}|_{X_0}$ is a valid choice of $\theta$.
\end{prop}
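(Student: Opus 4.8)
The plan is to verify all three assertions componentwise on $X_0$, exploiting that $\sO_X(Z_v)$ is cut out by the prime divisor $Z_v$ and that the sum of all the $Z_v$ is the divisor of a uniformizer of the base. First I would record the basic setup: since $X$ is regular it is locally factorial, so each prime divisor $Z_v$ is Cartier, $\sO_X(Z_v)$ is a line bundle, and $\hat\sigma_v$ is its tautological section with $\dv_X(\hat\sigma_v)=Z_v$. Writing $t$ for a uniformizer of the base DVR, the special fiber $X_0$ is the Cartier divisor $\dv_X(t)$; since $X_0$ is nodal it is reduced, so $\dv_X(t)=\sum_v Z_v$ with every component occurring with multiplicity one. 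Hence $\sO_X(\sum_v Z_v)\cong\sO_X$, and therefore $\sO_X(Z_v)\cong\sO_X(-Z')$ where $Z'=\overline{X_0\smallsetminus Z_v}=\sum_{w\neq v}Z_w$.

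Next I would restrict $\sO_X(Z_v)\cong\sO_X(-Z')$ to the two subcurves $Z_v$ and $Z'$ of $X_0$. Since $Z_v$ and $Z'$ share no common component (they meet only in the nodes, which have codimension two in $X$), restriction of divisorial line bundles gives $\sO_X(-Z')|_{Z_v}=\sO_{Z_v}(-(Z'\cap Z_v))$ and $\sO_X(Z_v)|_{Z'}=\sO_{Z'}(Z_v\cap Z')$; these are exactly the restrictions that define $\sO^v$ in Situation \ref{sit:lls-fiber}. Because $X_0$ is of compact type its dual graph is a tree, so the restriction map $\Pic(X_0)\to\prod_v\Pic(Z_v)$ is injective (the gluing scalars at the nodes can be normalized away along a spanning tree); hence $\sO_X(Z_v)|_{X_0}\cong\sO^v$, which is the first assertion. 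I then fix such an isomorphism once and for all.

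It remains to match the extra data. Since $\dv_X(\hat\sigma_v)=Z_v$ and $Z_v\subseteq X_0$, the zero scheme of $\hat\sigma_v$ meets $X_0$ in $Z_v\cap X_0=Z_v$ (the ideal of $X_0$ is $(t)\subseteq I_{Z_v}$), so $\hat\sigma_v|_{X_0}$ vanishes precisely on $Z_v$; transported through the chosen isomorphism it is therefore a valid choice of $\sigma_v$. Likewise $\hat\theta|_{X_0}$ is an isomorphism $\bigotimes_v\sO_X(Z_v)|_{X_0}\risom\sO_X|_{X_0}=\sO_{X_0}$, and composing with the fixed identifications $\sO^v\cong\sO_X(Z_v)|_{X_0}$ turns it into an isomorphism $\bigotimes_v\sO^v\risom\sO_{X_0}$, that is, a valid choice of $\theta$ (Situation \ref{sit:lls-fiber} imposes no further condition on $\theta$).

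As the paper itself anticipates, there is no real obstacle here; the only points that deserve a moment's care are that nodality of $X_0$ forces multiplicity one in $\dv_X(t)=\sum_v Z_v$ — which is what lets us replace $\sO_X(Z_v)$ by $\sO_X(-Z')$ and thereby compute its restrictions — and that the compact-type hypothesis is precisely what upgrades the componentwise agreement of restrictions to an honest isomorphism $\sO_X(Z_v)|_{X_0}\cong\sO^v$.
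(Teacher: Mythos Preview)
Your argument is correct and fills in precisely the details the paper omits; the paper itself gives no proof, calling the proposition ``almost immediate,'' and your verification---using regularity to make $Z_v$ Cartier, reducedness of $X_0$ to get $\sO_X(Z_v)\cong\sO_X(-Z')$, the componentwise restriction computation, and the tree hypothesis for injectivity of $\Pic(X_0)\to\prod_v\Pic(Z_v)$---is exactly the intended unpacking.
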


Given a flat base change $B' \to B$ with $B'$ still the spectrum of a discrete valuation ring,
it induces $\pi':X' \to B'$ with  special fiber $X'_0$ which is a base extension of $X_0$, and generic fiber $X'_{\eta}$, a base change of $X_{\eta}$. 
 Suppose we have a linear series $(\sL_{\eta},V_{\eta})$ of rank $r$ and degree $d$ on $X'_{\eta}$. 
 By the compact type hypothesis, we know that for every multidegree $\omega$ of total degree $d$, 
 there is a unique extension $\overline{\sL}$ of $\sL_{\eta}$ over all $X'$ such that the
restriction to $X'_0$ has multidegree $\omega$; denote this by $\overline{\sL}_{\omega}$.
 We can construct a system of choices of the $\overline{\sL}_{\omega}$ together with morphisms between them,
  just as we did above, with (the pullbacks to $X'$ of) $\sO_X(Z_i)$ and $\hat{\sigma}_i$ in place of $\sO^i$ and $\sigma_i$, 
  and $\hat{\theta}$ in place of $\theta$.
Then given an extension $\overline{\sL}_{\omega}$, we also obtain an extension $\overline{V}_{\omega}$ simply by taking
$$\overline{V}_{\omega}=V_{\eta} \cap \Gamma(X',\overline{\sL}_{\omega}) \subseteq \Gamma(X'_{\eta},\sL_{\eta}).$$
From the definition of this extension, we see that both it and the corresponding quotient are torsion-free, hence free.
A key observation for us (initially developed in \cite{os8}) is that for any multidegrees $\omega,\omega'$, we have that $\overline{V}_{\omega'}$ 
maps into $\overline{V}_{\omega}$ under the above-constructed morphism $\overline{\sL}_{\omega'} \to \overline{\sL}_{\omega}$.

We now want to consider several linear series as well as their products.
Although we are ultimately interested in the case of powers of a single line bundle on a smooth curve, 
when doing degeneration we will want to consider  
\textbf{distinct} extensions to the reducible special fiber.

Consider a base change family $\pi':X' \to B'$ and $(\sL_1,V_1),\dots,(\sL_m,V_m)$ linear series (possibly of different ranks and degrees) on $X'_{\eta}$.
Our objective is to study the multiplication map 
$$\mu:V_1 \otimes \cdots \otimes V_m \to \Gamma(X'_{\eta}, \sL_1 \otimes \cdots \otimes \sL_m)$$
by considering how it limits to $X'_0$. 
For each $\sL_k$, we fix systems of extensions $\overline{\sL}_{k,\omega_k}$ as above for each multidegree $\omega_k$ of total degree equal to $\deg \sL_k$.
If we set $\sL:=\sL_1 \otimes \cdots \otimes \sL_m$, we also fix a system of extensions $\overline{\sL}_{\omega}$ of $\sL$ 
for each $\omega$ of total degree equal to $\sum_k \deg \sL_k$. 
As discussed above, we can extend each $V_k$ in multidegree $\omega_k$ by setting
$$\overline{V}_{k,\omega_k}:=V_k \cap \Gamma(X',\overline{\sL}_{k,\omega_k}).$$ 
Similarly, if we write $W_{\eta}$ for the image of $\mu$, then $(\sL,W_{\eta})$ is itself a linear series, so we can extend it to
$$\overline{W}_{\omega}:=W_{\eta} \cap \Gamma(X',\overline{\sL}_{\omega}).$$
If we choose any $\omega_k$'s, and set $\omega=\sum_k \omega_k$, we can also extend our multiplication map to obtain
$$\overline{\mu}:\overline{V}_{1,\omega_1} \otimes \cdots \otimes  \overline{V}_{m,\omega_m} \to \Gamma(X', \overline{\sL}_{\omega}).$$
We see immediately from the construction that the image of $\overline{\mu}$ is contained in $\overline{W}_{\omega}$.
Because reduction to the special fiber is surjective, we likewise have that the image of the restriction of $\overline{\mu}$ to
$X'_0$ is contained in the restriction of $\overline{W}_{\omega}$.  
Finally, given multidegrees $\omega,\omega'$, as we observed above we have that $\overline{W}_{\omega'}$ maps into $\overline{W}_{\omega}$
under our constructed maps.

To summarize, if we restrict to the special fiber, we have a system of spaces $\overline{W}_{\omega}|_{X'_0}$, each of dimension equal to
$\dim W_{\eta}$, containing the images of the appropriate multiplication maps $\overline{\mu}|_{X'_0}$ and linked together by natural maps.
 So if we have an $m$- tuple of sections $s_1,\dots,s_m$ in $\overline{V}_{1,\omega_1}|_{X'_0},\dots, \overline{V}_{m,\omega_m}|_{X'_0}$, 
 and set $ \omega'=\sum_k \omega_k$, then $s_1 \otimes \dots \otimes s_m$ is in $\overline{W}_{\omega'}|_{X'_0}$. If we fix a multidegree $\omega$,
the image of $s_1 \otimes \dots \otimes s_m$ under the constructed map from multidegree $\omega'$ to multidegree $\omega$ lies in
$\overline{W}_{\omega}|_{X'_0}$. Our strategy is then to construct  many such sections in different multidegrees, and consider all of their 
images inside a single multidegree $\overline{W}_{\omega}|_{X'_0}$.
If we can show that the images span a space of dimension $N$, then this implies that $\overline{W}_{\omega}|_{X'_0}$ has dimension at least $N$, 
and hence that $W_{\eta}$ had dimension at least $N$ as well.

From now on, we restrict to the case of interest in the Maximal Rank Conjecture, where $\sL_1=\sL_2 =\dots = \sL_m$.

From the above discussion, we will be able to conclude the following criterion.
\begin{prop}\label{cor:lls-reduction} Given integers $(g,r,d,m), m\ge 2, r\ge 3, g-(r+1)(g-d+r)\ge 0$, 
Let $X_0$ be as before (see \ref{defX0}).
 Suppose we have a chain-adaptable limit linear series $(\sL^i,V^i)$ on $X_0$ of rank $r$ and degree $d$
such that, if $s_j \in \Gamma(X_0,\sL_{w_j})$ are the global sections arising from the chain-adaptability condition, 
there exists  $c \in \ZZ^{n-1}$ such that for all choices of the sections $\sigma_i$ as above, the images of
the $s_{j_1} \otimes \dots \otimes s_{j_m}$ in $\Gamma(X_0,(\sL^{\otimes m})_{w(c)})$ have at least $N$-dimensional span.
Then, for any smoothing $\pi:X \to B$ of $X_0$ as in \ref{sit:degen}, the generic fiber of the smoothing family is a smooth genus-$g$ curve $X$ which carries a linear series
$(\sL,V)$ of rank $r$ and degree $d$ on $X$ such that the $m$-multiplication map \eqref{eq:multiply} for $V$  has rank at least $N$.

If further $n=g$ and we have $(w_{j_1}+\dots+w_{j_m},w)$ steady for all  $(j_1,\dots,j_m)$, then $X_0$ is not in the closure of the locus in $\cM_g$
corresponding to curves which do not carry an $(\sL,V)$ having $m$-multiplication map of rank at least $N$.

In particular, if $N=\min\left(\binom{r+m}{m},md+1-g\right)$, the Maximal Rank Conjecture holds for $(g,r,d,m)$, and under the additional steadiness hypothesis,
 the locus in $\overline{\cM}_g$ consisting of  chains of genus-$1$ curves is not in the closure of the locus of $\cM_g$ for which the maximal rank condition fails.
\end{prop}

\begin{proof} First, the condition that $P_i-Q_i$ is not $\ell$-torsion for $\ell \leq d$ implies that the space of limit linear series on $X_0$ has expected dimension $\rho$. 
This implies that if $\pi:X \to B$ is any regular smoothing family of $X_0$, every limit linear series on $X_0$ is a limit of linear series on the smooth fibers of $\pi$:
that is, there exists a flat base change $B' \to B$ and an $(\sL,V)$ on the generic fiber of $X':= X \times_B B'$ such that $(\sL,V)$ extends as 
described above to the chosen limit linear series.
Indeed, since refinedness is part of the definition of chain adaptability, this follows from the original Eisenbud-Harris smoothing theorem (Corollary 3.5 of \cite{e-h1}).

Let $W$ denote the image of $V^{\otimes m}$ under multiplication. We want to prove that $W$ has dimension at least $N$.
We first observe that each section $s_j$ must be in the  multidegree-$w_j$ limit of $(\sL,V)$:
 indeed, the limit of $V$ has  dimension $r+1$ and maps into the $V^i$ under each restriction map, so according to the second part of Proposition \ref{prop:chain-adaptable}, 
 the limit of $V$ is the entire subspace of global sections of  $\sL_{w_j}$ which restricts into $V^i$ on $Z_i$ for all $i$, and in particular it contains $s_j$. 
We likewise have that each $s_{j_1} \otimes \dots \otimes s_{j_m}$ is in the multidegree- $(w_{j_1}+\dots+w_{j_m})$ limit of $(\sL^{\otimes m},W)$.
Then it follows from the above discussion that the image of $s_{j_1} \otimes \dots \otimes s_{j_m}$  lies in the multidegree-$w_{md}(c)$ (Notation \ref{notn:md}) limit of $(\sL^{\otimes m},W)$. 
If, as the $(j_1,\dots,j_m)$ vary, these images span a space of dimension $N$, then it follows that $W$ has dimension at least $N$, as desired.
This proves the first assertion of the corollary.

In order to prove the stronger statement under the additional steadiness hypothesis, we carry out a similar analysis when
the smoothing family $\pi:X \to B$ is not assumed regular. 
Because we have also assumed $n=g$, such families can be used to study arbitrary curves in $\overline{\cM}_g$ specializing to $X_0$.
 In this situation we can blow up $X$ to obtain a regular family  $\widetilde{\pi}:\widetilde{X} \to \widetilde{B}$, where the special fiber $\widetilde{X}_0$
  is obtained from $X_0$ by inserting (possibly empty) chains of projective lines at the nodes of $X_0$. 
  It suffices to show that in this case the hypotheses of the corollary apply equally to $\widetilde{X}_0$, 
  since we can then apply the first part of the corollary to conclude the desired statement for the family $\widetilde{\pi}$, whose smooth fibers agree with those of $\pi$.
   It is clear what limit linear series we should choose: if we insert a projective line with marked points $P,Q$ at a node 
   which has vanishing on one side (at $Q_i$) given by $b_0,\dots,b_r$ and on the other  (at $P_{i+1}$) by $a_0,\dots,a_r$, 
   so that $a_j+b_{j}=d$ for all $j$, then we  have on the rational curve sections of $\sO(d)$, unique up to scaling, with vanishing order $a_j$ at $P$ and $b_{j}$ at $Q$.
 If we take the span of these $r+1$ sections, we obtain a $\fg^r_d$ on the projective line, and if we repeat this procedure for every inserted projective line, 
 and keep the old linear series on the elliptic components, we will obtain a new chain-adaptable limit linear series on $\widetilde{X}_0$. 
 If $\widetilde{X}_0$ has $n'$ components, this limit linear series has corresponding global sections $\widetilde{s}_0,\dots,\widetilde{s}_r$ in  multidegrees determined by
$\widetilde{w}_0,\dots,\widetilde{w}_r $, where $\widetilde{w}_j$ is  obtained from the $w_j$ by assigning multidegree $0$ to every inserted component.

By construction, the sections  $\widetilde{s}_j$ agree with $s_j$ (at least, up to scalar) after restriction to any given component of $X_0$, 
so the same applies to their tensor products $\widetilde{s}_{\vec{j}}$ for any  $\vec{j}=(j_1,\dots,j_m)$.
 Now, in general the insertion of the new components can change which components are zeroed out in mapping from multidegree $\widetilde{w}_{\vec{j}}$ to 
multidegree $\widetilde{w}$, even on the components of $\widetilde{X}_0$ coming from $X_0$.\footnote{Here, $\widetilde{w}_{\vec{j}}=\sum_{k=1}^m \widetilde{w}_{j_k}$ and $\widetilde{w}$ is the multidegree obtained from $w$ in the statement of the proposition by assigning multidegree 0 to every inserted component.} 
Indeed, the sums  $\sum_{i'=i+1}^{g} (c'_{i'}-c_{i'})$ appearing in the definition of $\epsilon^i_{(\widetilde{w}_{\vec{j}},\widetilde{w})}$ will have some extra
repetitions inserted corresponding to the new components. 
If one has $i<i'$ such that $c'_i<c_i$ and $c'_{i'}>c_{i'}$, inserting repetitions can change where the minimum is achieved. 
However, this is precisely ruled out by the steadiness hypothesis, so we see that with this  hypothesis, we will have the map from multidegree $\widetilde{w}_{\vec{j}}$ to multidegree $\widetilde{w}$ nonzero precisely on the components $Z_i$ on which the original map was nonzero,
 together with any inserted components connecting two components on which the map is nonzero.
We conclude that on each component of $\widetilde{X}_0$ coming from $X_0$, the image of $\widetilde{s}_{\vec{j}}$ in multidegree $\widetilde{w}$ agrees up to scalar with the image of $s_{\vec{j}}$ in multidegree $\widetilde{w}$. 
Now, observe that since $\widetilde{w}$ induces multidegree $0$ on each inserted projective line, we have a canonical `contraction' isomorphism 
$$\Gamma(\widetilde{X}_0,(\sL^{\otimes m})_{\widetilde{w}}) \risom\Gamma(X_0,(\sL^{\otimes m})_{w})$$
and we see that under this isomorphism, the images of the $\widetilde{s}_{\vec{j}}$ in multidegree $\widetilde{w}$ agree up
to scalar with the images of the $s_{\vec{j}}$ in multidegree $w$. 
Indeed, this follows from the steadiness hypothesis, which ensures that not only do the sections in question agree up to scalar after restriction
to each component of $X_0$, but their support is a contiguous collection of components $Z_i \cup \dots \cup Z_{i'}$ for some $i' \geq i$, 
and the sections do not vanish at any of the nodes $Q_i,\dots,Q_{i'-1}$.
We conclude that if the images of the $\widetilde{s}_{\vec{j}}$ in 
multidegree $\widetilde{w}$ span a space of dimension at least
$N$, the same is true of the images of the $s_{\vec{j}}$ in multidegree $w$. Thus, our hypotheses on the limit linear series on $X_0$ 
imply that the same hypotheses are satisfied on $\widetilde{X}_0$,
as desired. The corollary follows.
\end{proof}

\section{Independence of sections and  examples}\label{sec:elem-1}

We take $X_0$ as in \ref{defX0}. We will assume $n=g$.
One can  construct a linear series on $X_0$ with optimal  vanishing at the points $P_i, Q_i$  by on  each component $Z_i$ pick a value of $j$, 
say $j_0(i)=\delta(i)$ define 
\[  a^0_j=j; \  \ b^i_{\delta(i)}=d-a^i_{\delta(i)};\  \ b^i_j=d-1-a^i_j, j\not=\delta(i);\  \ a^{i+1}_j=d-b^i_j \]
It corresponds to taking the line bundle ${\mathcal O}_{Z_i}( a^i_{\delta(i)}P_i+ b^i_{\delta(i)}Q_i)$ and sections with largest possible vanishing for this line bundle.
For this construction to be possible, one needs $a^i_{\delta(i)}>a^i_{\delta(i)-1}+1$. This in turn requires having picked all values $j<{\delta(i)}$ at least as many times  as $\delta(i)$ prior to picking $\delta(i)$.
One also needs the Brill-Noether number to be positive, that is $g \geq (r+1)(g-d+r)$.

\begin{defn} Given $g,r,d >0$ with $g \geq (r+1)(g-d+r)$, a  \textbf{$(g,r,d)$-sequence} $\delta_1,\dots,\delta_g$ is a sequence of $g$ integers between $0$ and $r$, 
with each integer between $0$ and $r$ occurring at least $g-d+r$ times, and satisfying the condition that for each $i=1,\dots,g$, 
no integer strictly less than $\delta_i$ occurs among $\delta_1,\dots,\delta_i$ strictly fewer times than $\delta_i$ does.

More generally, given also $a \geq 0$, an \textbf{$a$-shifted $(g,r,d)$-sequence} $\delta_1,\dots,\delta_g$ is a $(g,r,d)$-sequence in which every integer between $0$ and $r$ 
occurs at least $a+g-d+r$ times.
For an $a$-shifted  sequence, we construct the limit linear series starting with $a^0_j=j+a$.
\end{defn}
One can keep track of the choice of the index $\delta(i)$  by organizing them in a Young -tableau with $r+1$ columns numbered $0,\dots, r$ and an indeterminate number of rows.
The numbers from 1 to $g$ are placed successively on the tableau starting on the left top corner. The element $i$ is placed on the highest empty spot of the column $\delta(i)$.
By construction, the numbers on each column increase as you go down.
The condition for being a $\delta$ sequence is  that the numbers also increase as you move right and that the filled space contains an $(r+1)(g-d+r)$ rectangle.  
(see for instance \cite{lstrop}).
The condition for being an $a$-shifted $\delta$ sequence is  that the numbers increase from left to right and from top to bottom and that the filled space contains an $(r+1)(a+g-d+r)$ rectangle.

We can construct linear series and their sections with this method and consider the multisections obtained as their products.
Our next goal is to show that a set of multisections constructed in this way  is linearly independent if some conditions on their orders of vanishing at the nodes are satisfied.
\begin{lem}\label{rulesii-v}
With the above notations, assume we have a linear dependence of multisections on $X_0$,  $\sum_{\vec{j}} \gamma_{\vec{j}} s_{\vec{j},w}=0$.
\begin{enumerate}[(a)]
\item  If for some $i$, there is a single section $s_{\vec{j}}$ among those that appear in the linear combination and are not identically zero on $Z_i$  such that 
$a^i_{\vec{j}}$ is strictly minimal among the orders of vanishing at $P_i$, then $\gamma_{\vec{j}} =0$.
\item  If for some $i$, there is a single section $s_{\vec{j}}$ among those that appear in the linear combination  and are not identically zero on $Z_i$ such that 
$b^i_{\vec{j}}$ is strictly minimal among the orders of vanishing at $Q_i$, then $\gamma_{\vec{j}} =0$.
\item  If for some $i$, there are only one or  two sections that appear in the linear combination and are not identically zero on $Z_i$, then their coefficients are zero.
\item If for some $i$, there is some $k \geq 0$ such that for every section $s_{\vec{j}}$ that appears in the linear combination and is not identically zero on $Z_i$, at least $k$ of the $j_{\ell}$ are equal to $j\not= \delta_i$, and there is a unique $\vec{j}=(j_1,\dots,j_m)$ for which exactly $k$ of the $j_{\ell}$ are equal to $j$,
 then the coefficient of that one section $s_{\vec{j}}$ is zero.
\end{enumerate}
\end{lem}
\begin{proof}
The proof of (a) and (b) follows by evaluating the sections at $P_i, Q_i$ respectively
When there is a single section in (c), it automatically follows that the coefficient is 0. 
The case of two sections is a particular case of (d). 
Let us then prove (d).

As  $j \neq \delta_i$, $\dv s^i_j= a^i_j P_i + b^i_j Q_i + R^i_j$ for a uniquely determined $R^i_j$.
Under our non-torsion hypothesis, all the $R^i_{j_{\ell}}$ are distinct as the $j_{\ell}$ varies.
The hypothesis in (d) imply that $\ord_{R^i_j} s_{\vec{j},w}|_{Z_i}=k$, while  $\ord_{R^i_j} s_{\vec{j}',w}|_{Z_i}>k$ for all remaining $\vec{j}'\neq\vec{j}$.
We thus conclude $\gamma_{\vec{j}}=0$ (as in (a), (b)).
\end{proof}

\begin{lem}\label{rulesvi}
With the above notations, assume we have a linear dependence of multisections on $X_0$,  $\sum_{\vec{j}} \gamma_{\vec{j}} s_{\vec{j},w}=0$.
 If for some $Z_i$ we have sections corresponding to indices  $\vec{j}^1=(j_1^1,\dots,j_m^1)$, $\vec{j}^2=(j_1^2,\dots,j_m^2)$ satisfying the conditions below, then 
 $ \gamma_{\vec{j}^1}=0= \gamma_{\vec{j}^2}$

\begin{itemize}
\item $a^i_{\vec{j}^1}=a^i_{\vec{j}^2}<a^i_{\vec{j}'}$ for every section  $\vec{j}' \neq \vec{j}^1,\vec{j}^2$ that appears in the linear combination  and is not identically zero on $Z_i$.
\item $b^{i+1}_{\vec{j}^1}=b^{i+1}_{\vec{j}^2}<b^{i+1}_{\vec{j}'}$ for every section  $\vec{j}' \neq \vec{j}^1,\vec{j}^2$ that appears in the linear combination  and is not identically zero on $Z_{i+1}$.
\item For at least one of $i'=i$ or $i'=i+1$, we have all but exactly two of the $j_1^1,\dots,j_m^1$ are equal to $\delta_{i'}$, all but exactly two of the $j_1^2,\dots,j_m^2$ are equal to $\delta_{i'}$, 
and $a^{i'}_{\vec{j}^1} \neq a^{i'}_{(\delta_{i'},\dots,\delta_{i'})}-1$.
\end{itemize}
\end{lem}
\begin{proof} The conditions on the $a^i_{\vec{j}^e}$ and $b^{i+1}_{\vec{j}^e}$ imply that if a linear dependence has nonzero coefficients $\gamma_{\vec{j}^1}$ and $\gamma_{\vec{j}^2}$
 for $s_{\vec{j}^1}$ and $s_{\vec{j}^2}$, then the leading terms of $\gamma_{\vec{j}^1} s_{\vec{j}^1}$ and $\gamma_{\vec{j}^2} s_{\vec{j}^2}$ must cancel at both $P_i$ and $Q_{i+1}$.
  Note also that our hypotheses on the $\vec{j}^e$ imply that $b^{i}_{\vec{j}^1}= b^{i}_{\vec{j}^2}$ (they must either both be equal to $md-a^i_{\vec{j}^e}-2$ or to $md-a^i_{\vec{j}^e}-m$), 
  and thus that $a^{i+1}_{\vec{j}^1}= a^{i+1}_{\vec{j}^2}$ as well.
   It thus makes sense to normalize our scaling of $s_{\vec{j}^1}$ and $s_{\vec{j}^2}$ so that their values agree at $Q_{i}$ (equivalently, at $P_{i+1}$). 
   First suppose that all but exactly two of the $j_n^e$ are equal to $\delta_i$ for both $e=1$ and $e=2$, and $a^i_{\vec{j}^1} \neq a^i_{(\delta_i,\dots,\delta_i)}-1$.
In this case, with the stated normalization, and a given choice of $P_{i+1},Q_{i+1}$, the desired cancellation at $Q_{i+1}$ will determine a unique ratio 
for $\gamma_{\vec{j}^1}$ and $\gamma_{\vec{j}^2}$. 
It suffices then to show that if we vary $P_i,Q_i$, the ratio determined by cancellation at $P_i$ varies nontrivially. 
But note that the $m-2$ copies of $s^i_{\delta_i}$ in $s^i_{\vec{j}^1}$ and $s^i_{\vec{j}^2}$ do not affect this variation, so this follows from Corollary \ref{cor:nonconst}.
The other case follows similarly, except that we fix $P_i,Q_i$ and consider the effects of letting $P_{i+1},Q_{i+1}$ vary.
\end{proof}

\begin{lem}\label{rulesvii}
With the above notations, assume  $m=2$ and we have a linear dependence of multisections on $X_0$,  $\sum_{\vec{j}} \gamma_{\vec{j}} s_{\vec{j},w}=0$.
such that for some $i$ and $n \geq 2$ we have sections corresponding to indices $\vec{j}^e=(j_1^e,j_2^e)$ for $e=1,\dots,n$ satisfying the conditions below,
then the coefficients of $\vec{j}^1,\dots,\vec{j}^{n},(\delta_i,\delta_i)$ are zero.
\begin{itemize}
\item $\delta_{i'}=\delta_i$ for $i'=i,\dots,i+n-1$;
\item for $e=1,\dots,n$, we have 
$j_1^e < \delta_{i} < j_2^e$;
\item the value of $a^i_{\vec{j}^e}$ is independent of $e \in \{1,\dots,n\}$;
\item for $i'=i,\dots,i+n-1$, the sections with indices  $\vec{j}^1,\dots,\vec{j}^n$ are non-trivial on  $Z_{i'}$, 
and the only other remaining section appearing in the linear combination  which may be non-trivial  in these $Z_{i'}$ is the $(\delta_i,\delta_i)$ section.
\end{itemize}
\end{lem}
\begin{proof}
 if we let $Z_I=\cup_{i' =i}^{i+n-1} Z_{i'}$, we wish to show that with the given hypotheses, we cannot have a nontrivial linear relation
$$\gamma_{\vec{j}^1} s_{\vec{j}^1}|_{Z_I} +\dots + \gamma_{\vec{j}^n} s_{\vec{j}^n}|_{Z_I} + \gamma_{(\delta_i,\delta_i)} s_{(\delta_i,\delta_i)}|_{Z_I} = 0.$$ 
Observe that the conditions $\delta_{i'}=\delta_i$ and $j_1^e < \delta_{i} < j_2^e$ (and therefore  $j_1^e \not= \delta_{i} \not= j_2^e$) imply that
$a^{i'}_{\vec{j}^e}=a^{i}_{\vec{j}^e}+2(i'-i)$ for any $i'=i,\dots,i+k-1$.
In particular $a^{i'}_{\vec{j}^e}$ is also independent of $e$ for $i'>i$. 
The conditions   $c_{i'}\le a^{i'}_{\vec{j}^e},d'-c_{i'+1}\le d'-a^{i'}_{\vec{j}^e}-2$ imply that  $c_{i'}=a^{i'}_{\vec{j}^e}$.

The linear dependence, if non-trivial, must cancel all leading terms at all $P_{i'}$ and $Q_{i'}$; cancellation at  $Q_{i'}$ is equivalent to cancellation at $P_{i'+1}$.
This works out to at most $k+1$ conditions. 
The rough idea of our argument is that when the chosen marked points are general, we obtain either $k+1$ or $k$ conditions in this way.
The latter occurs in a situation where  $s_{(\delta_i,\delta_i)}$ never contributes to the leading terms.
More specifically, we proceed from $i'=i$ to $i'=i+k-1$, showing that if we fix the previous choices of $P_{i'},Q_{i'}$, a general choice 
of the current $Q_{i'}$ will impose an additional linear condition on the choice of the $\gamma_{\vec{j}}$, with at most one exception.  

We need some preliminary observations on when the $(\delta_i,\delta_i)$ section can contribute on a given component $Z_{i'}$.
 For every $i'=i,\dots,i+k-1$, the vanishing of the $(\delta_i,\delta_i)$ section on  $Z_{i'}$ add up to $2d$, 
 whereas the vanishings  of the $\vec{j}^e$ section add up to $2d-2$.
Let $i_0$ be the smallest number between $i$ and $i+k-1$ such that $a^{i_0}_{(\delta_i,\delta_i)} \geq c_{i_0}$ and
$b^{i_0}_{(\delta_i,\delta_i)}\geq 2d-c_{{i_0}+1}$, so that $s_{(\delta_i,\delta_i)}$ may give rise to a non-zero section on  $Z_{i_0}$.
 First observe that if there is any $i'$ with $a^{i'}_{\vec{j}^e} = a^{i'}_{(\delta_i,\delta_i)}-1$, then we have also $b^{i'}_{\vec{j}^e} = b^{i'}_{(\delta_i,\delta_i)}-1$.
Furthermore $i_0=i'$ is the only column between $i$ and $i+k-1$ in which $s_{(\delta_i,\delta_i)}$ may occur.
Similarly, if for some $i'$ we have $b^{i'}_{\vec{j}^e} = b^{i'}_{(\delta_i,\delta_i)}$, then  $a^{i'}_{\vec{j}^e} = a^{i'}_{(\delta_i,\delta_i)}-2$.
So we must have $i_0=i'$.
 In this case, if $i'<i+k-1$ we can also have $s_{(\delta_i,\delta_i)}$ occurring in the next column, but not in any others, since 
  $b^{i'+1}_{\vec{j}^e} = b^{i'+1}_{(\delta_i,\delta_i)}-2$.

We now begin our analysis with the case $i'=i$: let $W_i$ be the subspace of the $k$-dimensional vector space of  $\gamma_{\vec{j}^e}$
 such that there exists a  $\gamma_{(\delta_i,\delta_i)}$ giving a valid linear dependence on $Z_{i}$. 
 If $i_0> i$, then cancellation of lowest-order terms at $P_i$ is a codimension-$1$ subspace $H$ of the space of $\gamma_{\vec{j}^e}$ containing $W_i$ 
(specifically, given our normalization, it is the hyperplane $\sum_l x_l =0$). 
Moreover, when $i_0>i$ we have observed above that  $a^{i}_{\vec{j}^e} \neq a^{i}_{(\delta_i,\delta_i)}-1$.
So under our normalization hypotheses, the sections $(s_{\vec{j}^1}^{i}|_{Z_{i}},\dots,s_{\vec{j}^n}^{i}|_{Z_{i}})$ satisfy the hypotheses of 
Corollary \ref{cor:nondegen}, and the map
$$Q_{i} \mapsto (s_{\vec{j}^1}^{i}|_{Z_{i}}(Q_{i}),\dots, s_{\vec{j}^k}^{i}|_{Z_{i}}(Q_{i}))$$
is nondegenerate.
 In particular, it is nonconstant, so a general choice of $Q_{i}$ will not have image equal to (the projectivization of) the orthogonal complement of $H$.
 Hence, cancellation of lowest order terms at $Q_i$ will impose a different codimension-$1$ condition. 
 In this case, we thus have that $W_i$ is at most $(k-2)$-dimensional. 

On the other hand, if $i_0=i$, we claim that $W_i$ has dimension at most $k-1$.
 Indeed, if $a^{i}_{\vec{j}^e} > a^{i}_{(\delta_i,\delta_i)}$, then $a^i_{(\delta_i,\delta_i)}$ is a unique minimum.
 So by \ref{rulesii-v}(a), we can drop the $(\delta_i,\delta_i)$ row, and we are in the same situation as above, with $\dim W_i=n-2$. 
On the other hand, if $a^{i}_{\vec{j}^e} < a^{i}_{(\delta_i,\delta_i)}$, then $W_i$ is still contained in the hyperplane $H$ described above. 
Finally, if  $a^{i}_{\vec{j}^e} = a^{i}_{(\delta_i,\delta_i)}$, then  $b^{i}_{\vec{j}^e} = b^i_{(\delta_i,\delta_i)}-2$,
 and in this case $W_i$ is contained in the hyperplane obtained by looking at cancellation of the leading coefficients at $Q_i$.

Now, for $i'>i$, let $W_{i'-1} $ be the subspace of choices of $\gamma_{\vec{j}^s}$ such that there exists a choice of
$\gamma_{(\delta_i,\delta_i)}$ giving a valid linear dependence on $Z_{i}\cup \dots\cup Z_{i'-1}$.
 If $W_{i'-1}=0$, we are done.
  Otherwise, our inductive hypothesis is that $W_{i'-1}$ has codimension at least $i'-i+1$ if $i_0 \geq i'$, and $W_{i'-1}$ has codimension at least $i'-i$ if $i_0<i'$. 
  We then want to show that imposing linear dependence also on $Z_{i'}$ reduces the dimension of $W_{i'-1}$ by $1$ unless $i'=i_0$.
First, if we have either  $a^{i'}_{\vec{j}^e} = a^{i'}_{(\delta_i,\delta_i)}-1$ or $b^{i'}_{\vec{j}^e} = b^{i'}_{(\delta_i,\delta_i)}$, then necessarily $i'=i_0$;
in this case, there is nothing to show.
 So we can assume that $a^{i'}_{\vec{j}^e} \neq a^{i'}_{(\delta_i,\delta_i)}-1$ and $b^{i'}_{\vec{j}^e} \neq b^{i'}_{(\delta_i,\delta_i)}$. 
 The latter means that in order to have linear dependence on $Z_{i'}$, we must have cancellation among the leading coefficients at $Q_{i'}$ of the $s_{\vec{j}^e}$. 
 The former implies that, just as in the case $i'=i$,  we have that the map
$$Q_{i'} \mapsto (s_{\vec{j}^1}^{i'}|_{Z_{i'}}(Q_{i'}),\dots, s_{\vec{j}^k}^{i'}|_{Z_{i'}}(Q_{i'}))$$
is nondegenerate.
 In particular, a general choice of $Q_{i'}$ will have image not lying in the orthogonal complement of $W_{i'-1}$, meaning that 
requiring that the $\gamma_{\vec{\delta}}$ impose a linear dependence also on $Z_{i'}$ reduces the dimension of $W_{i'-1}$ by $1$, as desired.

Because $Z_I$ has $k$ components, we thus conclude that when we have imposed cancellation of leading terms at all $P_{i'}$ and $Q_{i'}$, 
we have reduced the space of possible linear dependences to $(0)$,  proving the result. 
\end{proof}

In order to visualize and more easily work with the data of sections and their orders of vanishing, we will organize them  in tables
\begin{defn}\label{defn:tensor-table}
Given a $(g,r,d)$-sequence $\vec{\delta}=\delta_1,\dots,\delta_g$, and $m \geq 2$,  $T'(\vec{\delta})$ is  the $(r+1)\times g$ table 
whose $j^{th}, j=0,\dots, r$ row consists of the orders of vanishing at the nodes of the $j^{th}$ section of the linear series associated to $\delta$.

Then, $T(\vec{\delta})$  is the $\binom{r+m}{m}\times g$ table with rows indexed by $\vec{j}=(j_1,\dots,j_m)$ (with $0 \leq j_1 \leq j_2 \leq \dots \leq j_m \leq r$), 
and each entry being a pair of integers $(a^i_{\vec{j}},b^i_{\vec{j}})$, by setting the $(j_1,\dots,j_m)$th row of $T(\vec{\delta})$ to be the sum of the $j_n$th rows of $T'(\vec{\delta})$, for 
$n=1,\dots,m$.

More generally, given an $a$-shifted $(g,r,d)$-sequence, define the tables $T'(\vec{\delta})$ and $T(\vec{\delta})$ just as above, except that we start with $a^1_j=a+j$ for $j=0,\dots,r$.
\end{defn}

Thus, in the $a$-shifted case, all the $a^i_j$ are $a$ larger than in the usual case, and all the $a^i_{\vec{j}}$ are $ma$ larger. This is  convenient for certain reduction arguments.

We now construct. a table $T_w(\vec{\delta})$ that keeps track of forced vanishing of sections when we look at the multidegree associated to a choice of  a $(g-1)$-tuple of integers 
$c$ as in \ref{notn:md}. The rows of $T_w(\vec{\delta})$ will correspond to a collection of global sections which all lie in the multidegree determined by $w$. 
According to Proposition \ref{prop:zeroing-pattern} and with the  $\epsilon^i_{w',w}$ as defined there, this amounts to the following:

\begin{defn}\label{defn:table-erasures} In the situation of Definition \ref{defn:tensor-table}, suppose that we are also given a $c=(c_2,\dots,c_{g}) \in \ZZ^{g-1}$, as defined in \ref{notn:md}. 
Then define the table $T_{w(c)}(\vec{\delta})$ obtained from $T(\vec{\delta})$ by erasing certain entries as follows: for the row 
of $T(\vec{\delta})$ indexed by $\vec{j}=(j_1,\dots,j_m)$, let $c'=(a^2_{\vec{j}},\dots,a^{g}_{\vec{j}})$.
Then for $i=1,\dots,g$, the $i$th entry in the $\vec{j}$th row of $T(\vec{\delta})$ is erased in $T_w(\vec{\delta})$ if the $\epsilon^i_{w',w}$  is equal to $0$.

In addition, we say that $T(\vec{\delta})$ is \textbf{steady} with respect to $w$ if for each $\vec{j}$, setting $c'=(a^2_{\vec{j}},\dots,a^g_{\vec{j}})$ as above, 
we have that $(w(c'),w(c))$ is steady (see definition \ref{defsteady}).
\end{defn}
We introduce another piece of notation that will make our language less cumbersome in the future
\begin{defn}\label{defn:Nexpungeable}
We will say that a table is $N$\textbf{-expungeable }if one can choose $N$ rows corresponding to $N$ sections that can be proved to be linearly independent by repeated application of 
Lemmas \ref{rulesii-v}, \ref{rulesvi}, \ref{rulesvii}.
\end{defn}

We will use these notations with $N=\min\left(\binom{r+m}{m},md+1-g\right)$.
Then Proposition \ref{cor:lls-reduction} will imply the maximal rank conjecture for the given numerical values of $g, r, d, m$.

We now give several examples. 
The first two are very simple cases for $r=3$, $m=2$, but as we will see in the proof of Theorem \ref{thm:m2} below, 
these examples fully handle the case $r=3$ and $m=2$, and also constitute the base for the general case with $m=2$.

\begin{ex}\label{ex:m2-r3-g4}
Consider the $r=3, g=4, d=6$. This is the  canonical case and  the only possible $(g,r,d)$-sequence is 
$\vec{\delta}=0,1,2,3$, which gives $T'(\vec{\delta})$ as follows.
\begin{center}
\begin{tabular}{lr|lr|lr|lr}
$0$ & $6$ & $0$ & $5$ & $1$ & $4$ & $2$ & $3$ \\
$1$ & $4$ & $2$ & $4$ & $2$ & $3$ & $3$ & $2$ \\
$2$ & $3$ & $3$ & $2$ & $4$ & $2$ & $4$ & $1$ \\
$3$ & $2$ & $4$ & $1$ & $5$ & $0$ & $6$ & $0$ \\
\end{tabular}
\end{center}
Take now $m=2$. Choose $c=(2,6,8)$. The highlighted entries in the  table below are the non-erased entries in $T_w(\vec{\delta})$.
We have placed the $c_i$ and $m-c_i$ at the top and bottom of the table in order to make the erasure procedure clearer.
\begin{center}
\begin{tabular}{llr|lr|lr|lr}
& & $10$ & $2$ & $6$ & $6$ & $4$ & $8$ & \\
\hline
$(0,0)$ &\cellcolor[gray]{.8} $0$ &\cellcolor[gray]{.8} $12$ & $0$ & $10$ & $2$ & $8$ & $4$ & $6$ \\
$(0,1)$ &\cellcolor[gray]{.8} $1$ &\cellcolor[gray]{.8} $10$ &\cellcolor[gray]{.8} $2$ &\cellcolor[gray]{.8} $9$ & $3$ & $7$ & $5$ & $5$ \\
$(0,2)$ & $2$ & $9$ & \cellcolor[gray]{.8} $3$ & \cellcolor[gray]{.8} $7$ & $5$ & $6$ & $6$ & $4$ \\
$(1,1)$ & $2$ & $8$ & \cellcolor[gray]{.8} $4$ & \cellcolor[gray]{.8} $8$ & $4$ & $6$ & $6$ & $4$ \\
$(0,3)$ & $3$ & $8$ & \cellcolor[gray]{.8} $4$ & \cellcolor[gray]{.8} $6$ & \cellcolor[gray]{.8} $6$ & \cellcolor[gray]{.8} $4$ & \cellcolor[gray]{.8} $8$ & \cellcolor[gray]{.8} $3$ \\
$(1,2)$ & $3$ & $7$ & \cellcolor[gray]{.8} $5$ & \cellcolor[gray]{.8} $6$ & \cellcolor[gray]{.8} $6$ & \cellcolor[gray]{.8} $5$ & $7$ & $3$ \\
$(1,3)$ & $4$ & $6$ & $6$ & $5$ & $7$ & $3$ & \cellcolor[gray]{.8} $9$ & \cellcolor[gray]{.8} $2$ \\
$(2,2)$ & $4$ & $6$ & $6$ & $4$ & \cellcolor[gray]{.8}$8$ & \cellcolor[gray]{.8}$4$ & \cellcolor[gray]{.8} $8$ & \cellcolor[gray]{.8} $2$ \\
$(2,3)$ & $5$ & $5$ & $7$ & $3$ & $9$ & $2$ & \cellcolor[gray]{.8}$10$ & \cellcolor[gray]{.8}$1$ \\
$(3,3)$ & $6$ & $4$ & $8$ & $2$ & $10$ & $0$ & \cellcolor[gray]{.8}$12$ & \cellcolor[gray]{.8}$0$ \\
\hline
& & $10$ & $2$ & $6$ & $6$ & $4$ & $8$ & \\
\end{tabular}
\end{center}

Since $\binom{r+2}{2}=10>2d+1-g=9$, this is a surjective case.
To prove surjectivity in this case we may drop any one section (as we had 10 to start with). 
If for instance we drop the $(0,3)$ section, we see that there are no remaining repetitions among the $a^i_{\vec{j}}$ in any column, so we have
that $T_w(\vec{\delta})$ corresponds to  $9$ different sections simply by repeated application of Lemma  \ref{rulesii-v}(a), proving the desired surjectivity.
\end{ex}

\begin{ex}\label{ex:m2-r3-g5}
Next consider the case $r=3, g=5, d=7$, and choose the  $(g,r,d)$-sequence  $\vec{\delta}=0,1,2,3,0$. which gives $T'(\vec{\delta})$ as follows.
\begin{center}
\begin{tabular}{lr|lr|lr|lr|lr}
$0$ & $7$ & $0$ & $6$ & $1$ & $5$ & $2$ & $4$ & $3$ & $4$ \\
$1$ & $5$ & $2$ & $5$ & $2$ & $4$ & $3$ & $3$ & $4$ & $2$ \\
$2$ & $4$ & $3$ & $3$ & $4$ & $3$ & $4$ & $2$ & $5$ & $1$ \\
$3$ & $3$ & $4$ & $2$ & $5$ & $1$ & $6$ & $1$ & $6$ & $0$ \\
\end{tabular}
\end{center}
Choose $m=2, c=(2,6,8,10)$. We then get $T(\vec{\delta})$ as follows.
\begin{center}
\begin{tabular}{llr|lr|lr|lr|lr}
 & & $12$ & $2$ & $8$ & $6$ & $6$ & $8$ & $4$ & $10$ &  \\
\hline
$(0,0)$ & \cellcolor[gray]{.8} $0$ & \cellcolor[gray]{.8} $14$ & $0$ & $12$ & $2$ & $10$ & $4$ & $8$ & $6$ & $8$ \\
$(0,1)$ & \cellcolor[gray]{.8} $1$ & \cellcolor[gray]{.8} $12$ & \cellcolor[gray]{.8} $2$ & \cellcolor[gray]{.8} $11$ & $3$ & $9$ & $5$ & $7$ & $7$ & $6$ \\
$(0,2)$ & $2$ & $11$ & \cellcolor[gray]{.8} $3$ & \cellcolor[gray]{.8} $9$ & $5$ & $8$ & $6$ & $6$ & $8$ & $5$ \\
$(1,1)$ & $2$ & $10$ & \cellcolor[gray]{.8} $4$ & \cellcolor[gray]{.8} $10$ & $4$ & $8$ & $6$ & $6$ & $8$ & $4$ \\
$(0,3)$ & $3$ & $10$ & \cellcolor[gray]{.8} $4$ & \cellcolor[gray]{.8} $8$ & \cellcolor[gray]{.8} $6$ & \cellcolor[gray]{.8} $6$ & \cellcolor[gray]{.8} $8$ & \cellcolor[gray]{.8} $5$ & $9$ & $4$ \\
$(1,2)$ & $3$ & $9$ & \cellcolor[gray]{.8} $5$ & \cellcolor[gray]{.8} $8$ & \cellcolor[gray]{.8} $6$ & \cellcolor[gray]{.8} $7$ & $7$ & $5$ & $9$ & $3$ \\
$(1,3)$ & $4$ & $8$ & $6$ & $7$ & $7$ & $5$ & \cellcolor[gray]{.8} $9$ & \cellcolor[gray]{.8} $4$ & \cellcolor[gray]{.8} $10$ & \cellcolor[gray]{.8} $2$ \\
$(2,2)$ & $4$ & $8$ & $6$ & $6$ & \cellcolor[gray]{.8} $8$ & \cellcolor[gray]{.8} $6$ & \cellcolor[gray]{.8} $8$ & \cellcolor[gray]{.8} $4$ & \cellcolor[gray]{.8} $10$ & \cellcolor[gray]{.8} $2$ \\
$(2,3)$ & $5$ & $7$ & $7$ & $5$ & $9$ & $4$ & \cellcolor[gray]{.8} $10$ & \cellcolor[gray]{.8} $3$ & \cellcolor[gray]{.8} $11$ & \cellcolor[gray]{.8} $1$ \\
$(3,3)$ & $6$ & $6$ & $8$ & $4$ & $10$ & $2$ & \cellcolor[gray]{.8} $12$ & \cellcolor[gray]{.8} $2$ & \cellcolor[gray]{.8} $12$ & \cellcolor[gray]{.8} $0$ \\
\hline
 & & $12$ & $2$ & $8$ & $6$ & $6$ & $8$ & $4$ & $10$ &  \\
\end{tabular}
\end{center}
This case is both surjective and injective, so we need to use all sections (corresponding to all rows on the table).
We can drop the last four rows by applying Lemma  \ref{rulesii-v}(b) twice and \ref{rulesii-v}(d) once to the last column, and then apply \ref{rulesii-v}(c) to the third column to cancel the coefficients 
of the  $(0,3)$ and $(1,2)$ rows. 
After this, no repetitions remain among either the $a^i_{\vec{j}}$ or  $b^i_{\vec{j}}$ in any column, so we can drop the rest of the rows using Lemma  \ref{rulesii-v} either  (a) or (b).
\end{ex}

The following example is the first requiring the use of Lemmas \ref{rulesvi} and \ref{rulesvii},  and is the first of the sequence of `critical' cases for $m=2$, 
treated more generally in Proposition \ref{prop:m2-critical} below.

\begin{ex}\label{ex:m2-critical} 
Consider the case  $r=4, g=10, d=12$, and take the $(g,r,d)$-sequence  $\vec{\delta}=0,0,1,1,2,2,3,3,4,4$. This gives $T'(\vec{\delta})$ as follows.
\begin{center}
\begin{tabular}{lr|lr|lr|lr|lr|lr|lr|lr|lr|lr}
$0$ & $12$ & $0$ & $12$ & $0$ & $11$ & $1$ & $10$ & $2$ & $9$ & $3$ & $8$ & 
$4$ & $7$ & $5$ & $6$ & $6$ & $5$ & $7$ & $4$ \\
$1$ & $10$ & $2$ & $9$ & $3$ & $9$ & $3$ & $9$ & $3$ & $8$ & $4$ & $7$ & 
$5$ & $6$ & $6$ & $5$ & $7$ & $4$ & $8$ & $3$ \\
$2$ & $9$ & $3$ & $8$ & $4$ & $7$ & $5$ & $6$ & $6$ & $6$ & $6$ & $6$ & 
$6$ & $5$ & $7$ & $4$ & $8$ & $3$ & $9$ & $2$ \\
$3$ & $8$ & $4$ & $7$ & $5$ & $6$ & $6$ & $5$ & $7$ & $4$ & $8$ & $3$ & 
$9$ & $3$ & $9$ & $3$ & $9$ & $2$ & $10$ & $1$ \\
$4$ & $7$ & $5$ & $6$ & $6$ & $5$ & $7$ & $4$ & $8$ & $3$ & $9$ & $2$ & 
$10$ & $1$ & $11$ & $0$ & $12$ & $0$ & $12$ & $0$ \\
\end{tabular}
\end{center}
Take $m=2, c=(2,4,7,9,12,15,17,20,22)$, We then get $T(\vec{\delta})$ as follows.

\begin{center}
\resizebox{\textwidth}{!}{
\begin{tabular}{llr|lr|lr|lr|lr|lr|lr|lr|lr|lr}
 &  &  $22$ & $2$ & $20$ & $4$ & $17$ & $7$ & $15$ & $9$ & $12$ & $12$ & $9$ & $15$ & $7$ & $17$ & $4$ & $20$ & $2$ & $22$ & \\
\hline
$(0,0)$ & \cellcolor[gray]{.8} $0$ & \cellcolor[gray]{.8} $24$ & $0$ & $24$ & $0$ & $22$ & $2$ & $20$ & $4$ & $18$ & $6$ & $16$ & $8$ & $14$ & $10$ & $12$ & $12$ & $10$ & $14$ & $8$ \\
$(0,1)$ & \cellcolor[gray]{.8} $1$ & \cellcolor[gray]{.8} $22$ & \cellcolor[gray]{.8} $2$ & \cellcolor[gray]{.8} $21$ & $3$ & $20$ & $4$ & $19$ & $5$ & $17$ & $7$ & $15$ & $9$ & $13$ & $11$ & $11$ & $13$ & $9$ & $15$ & $7$ \\
$(0,2)$ & $2$ & $21$ & \cellcolor[gray]{.8} $3$ & \cellcolor[gray]{.8} $20$ & \cellcolor[gray]{.8} $4$ & \cellcolor[gray]{.8} $18$ & $6$ & $16$ & $8$ & $15$ & $9$ & $14$ & $10$ & $12$ & $12$ & $10$ & $14$ & $8$ & $16$ & $6$ \\
$(1,1)$ & $2$ & $20$ & $4$ & $18$ & \cellcolor[gray]{.8} $6$ & \cellcolor[gray]{.8} $18$ & $6$ & $18$ & $6$ & $16$ & $8$ & $14$ & $10$ & $12$ & $12$ & $10$ & $14$ & $8$ & $16$ & $6$ \\
$(0,3)$ & $3$ & $20$ & $4$ & $19$ & \cellcolor[gray]{.8} $5$ & \cellcolor[gray]{.8} $17$ & \cellcolor[gray]{.8} $7$ & \cellcolor[gray]{.8} $15$ & \cellcolor[gray]{.8} $9$ & \cellcolor[gray]{.8} $13$ & $11$ & $11$ & $13$ & $10$ & $14$ & $9$ & $15$ & $7$ & $17$ & $5$ \\
$(1,2)$ & $3$ & $19$ & $5$ & $17$ & $7$ & $16$ & \cellcolor[gray]{.8} $8$ & \cellcolor[gray]{.8} $15$ & \cellcolor[gray]{.8} $9$ & \cellcolor[gray]{.8} $14$ & $10$ & $13$ & $11$ & $11$ & $13$ & $9$ & $15$ & $7$ & $17$ & $5$ \\
$(0,4)$ & $4$ & $19$ & $5$ & $18$ & $6$ & $16$ & $8$ & $14$ & \cellcolor[gray]{.8} $10$ & \cellcolor[gray]{.8} $12$ & \cellcolor[gray]{.8} $12$ & \cellcolor[gray]{.8} $10$ & $14$ & $8$ & $16$ & $6$ & $18$ & $5$ & $19$ & $4$ \\
$(1,3)$ & $4$ & $18$ & $6$ & $16$ & $8$ & $15$ & $9$ & $14$ & \cellcolor[gray]{.8} $10$ & \cellcolor[gray]{.8} $12$ & \cellcolor[gray]{.8} $12$ & \cellcolor[gray]{.8} $10$ & $14$ & $9$ & $15$ & $8$ & $16$ & $6$ & $18$ & $4$ \\
$(2,2)$ & $4$ & $18$ & $6$ & $16$ & $8$ & $14$ & $10$ & $12$ & \cellcolor[gray]{.8} $12$ & \cellcolor[gray]{.8} $12$ & \cellcolor[gray]{.8} $12$ & \cellcolor[gray]{.8} $12$ & $12$ & $10$ & $14$ & $8$ & $16$ & $6$ & $18$ & $4$ \\
$(1,4)$ & $5$ & $17$ & $7$ & $15$ & $9$ & $14$ & $10$ & $13$ & $11$ & $11$ & \cellcolor[gray]{.8} $13$ & \cellcolor[gray]{.8} $9$ & \cellcolor[gray]{.8} $15$ & \cellcolor[gray]{.8} $7$ & \cellcolor[gray]{.8} $17$ & \cellcolor[gray]{.8} $5$ & $19$ & $4$ & $20$ & $3$ \\
$(2,3)$ & $5$ & $17$ & $7$ & $15$ & $9$ & $13$ & $11$ & $11$ & $13$ & $10$ & \cellcolor[gray]{.8} $14$ & \cellcolor[gray]{.8} $9$ & \cellcolor[gray]{.8} $15$ & \cellcolor[gray]{.8} $8$ & $16$ & $7$ & $17$ & $5$ & $19$ & $3$ \\
$(2,4)$ & $6$ & $16$ & $8$ & $14$ & $10$ & $12$ & $12$ & $10$ & $14$ & $9$ & $15$ & $8$ & $16$ & $6$ & \cellcolor[gray]{.8} $18$ & \cellcolor[gray]{.8} $4$ & \cellcolor[gray]{.8} $20$ & \cellcolor[gray]{.8} $3$ & $21$ & $2$ \\
$(3,3)$ & $6$ & $16$ & $8$ & $14$ & $10$ & $12$ & $12$ & $10$ & $14$ & $8$ & $16$ & $6$ & $18$ & $6$ & \cellcolor[gray]{.8} $18$ & \cellcolor[gray]{.8} $6$ & $18$ & $4$ & $20$ & $2$ \\
$(3,4)$ & $7$ & $15$ & $9$ & $13$ & $11$ & $11$ & $13$ & $9$ & $15$ & $7$ & $17$ & $5$ & $19$ & $4$ & $20$ & $3$ & \cellcolor[gray]{.8} $21$ & \cellcolor[gray]{.8} $2$ & \cellcolor[gray]{.8} $22$ & \cellcolor[gray]{.8} $1$ \\
$(4,4)$ & $8$ & $14$ & $10$ & $12$ & $12$ & $10$ & $14$ & $8$ & $16$ & $6$ & $18$ & $4$ & $20$ & $2$ & $22$ & $0$ & $24$ & $0$ & \cellcolor[gray]{.8} $24$ & \cellcolor[gray]{.8} $0$ \\
\hline
 &  &  $22$ & $2$ & $20$ & $4$ & $17$ & $7$ & $15$ & $9$ & $12$ & $12$ & $9$ & $15$ & $7$ & $17$ & $4$ & $20$ & $2$ & $22$ & \\
\end{tabular}
}
\end{center}
 
If we go from left to right we can use Lemma \ref{rulesii-v} (a) on $Z_1$ (first column ) to prove linear independence of the sections with indices $(0,0)$ and $(0,1)$ , 
$(0,2)$ on $Z_2$ (second column), $(0,3)$ and $(1,1)$ on $Z_3$ (third column),  and $(1,2)$ on $Z_4$ (fourth column).
Then, using Lemma \ref{rulesii-v} (b) we can drop rows $(4,4)$ and $(3,4)$ from the last column, row $(2,4)$ from the ninth column, rows $(1,4)$ and $(3,3)$ from the eighth column,
 and row $(2,3)$ from the seventh column. 
This leaves only rows $(0,4)$, $(1,3)$  and $(2,2)$ in the fifth and sixth columns, which can be dropped using either Lemma \ref{rulesvi} (together with Lemmas \ref{rulesii-v} (c)) or
 Lemma \ref{rulesvii}.
\end{ex}

\section{Observations on injectivity}\label{sec:injective}

We now consider injective cases, meaning that $\binom{r+m}{m} \leq md+1-g$.
Our main result will be the observation that $N$-expungeability for an injective case implies that we get infinitely many additional cases by increasing $g$. 
In fact, we will give two versions of this statement,  with one adding a mild hypothesis but yielding more cases in return. 
A preliminary definition is the following.

\begin{defn}\label{defn:extendable} We say that a $(g,r,d)$-sequence $\delta=(\delta_1,\dots,\delta_g)$ is \textbf{extendable} if for all
$g' \geq g$, and all $d'$ with $g' \geq (r+1)(g'-d+r')$, we can extend $\delta$ to a valid $(g',r,d')$-sequence.
\end{defn}

 We have the following characterization:

\begin{prop}\label{prop:extendable} A $(g,r,d)$-sequence $\vec{\delta}$ is extendable if and only if $0$ occurs at most one time more than $r$ does in $\vec{\delta}$.
\end{prop}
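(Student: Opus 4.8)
The plan is to convert extendability into a purely numerical condition on the multiset of values of $\vec{\delta}$. For $j=0,\dots,r$ write $n_j$ for the number of times $j$ occurs in $\vec{\delta}$, so $\sum_j n_j = g$. The first step records the basic consequence of the balanced condition in the definition of a $(g,r,d)$-sequence: at the last position $i$ where a given value $j$ occurs, the balanced condition forces every $j'<j$ to have occurred at least $n_j$ times among $\delta_1,\dots,\delta_i$, hence $n_{j'}\ge n_j$. Thus the occurrence counts of any $(g,r,d)$-sequence are automatically weakly decreasing, $n_0\ge n_1\ge\dots\ge n_r$; conversely every weakly decreasing tuple $(n_j')$ with $n_j'\ge r+g'-d'$ for all $j$ and $\sum_j n_j'=g'$ is realized by some $(g',r,d')$-sequence (e.g. the nondecreasing one $0^{n_0'}1^{n_1'}\cdots r^{n_r'}$).

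Next I would analyze single-symbol extensions. If a prefix has (necessarily weakly decreasing) occurrence counts $(n_j)$, then appending the symbol $j$ keeps the balanced condition valid precisely when $j=0$ or $n_{j-1}>n_j$, and in either case the new count vector is again weakly decreasing. Iterating this with a short greedy argument — at each stage append the least symbol $k$ whose current count is still below its target, which is legal because the target's monotonicity forces the current count of $k-1$ to already exceed that of $k$ — shows that the occurrence-count vectors obtainable by extending $\vec{\delta}$ to a sequence of length $g'$ are exactly the weakly decreasing $(n_j')$ with $n_j'\ge n_j$ for all $j$ and $\sum_j n_j'=g'$. Combining with the previous step, $\vec{\delta}$ extends to a valid $(g',r,d')$-sequence if and only if there is a weakly decreasing $(n_j')$ with $n_j'\ge\max(n_j,\,r+g'-d')$ and $\sum_j n_j'=g'$; since any surplus can be dumped onto $n_0'$ without breaking monotonicity, this holds if and only if $\sum_j\max(n_j,\,r+g'-d')\le g'$.

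It then remains to quantify over all admissible $(g',d')$. Writing $e'=r+g'-d'$, the admissible pairs are those with $e'\in\ZZ$, $g'\ge g$, and $(r+1)e'\le g'$, and $\sum_j\max(n_j,e')=g+\sum_j\max(e'-n_j,0)$. I would split into the ranges $e'\le g/(r+1)$ and $e'>g/(r+1)$. In the first, the tightest admissible $g'$ is $g$ and the condition reduces to $e'\le n_r$, which over all such $e'$ amounts to $n_r\ge\lfloor g/(r+1)\rfloor$, hence (since $n_r\le g/(r+1)$ always) to $n_r=\lfloor g/(r+1)\rfloor$. In the second, the tightest admissible $g'$ is $(r+1)e'$, and using $\sum_j\max(e'-n_j,0)=(r+1)e'-\sum_j\min(e',n_j)$ the condition reduces to $\sum_j\min(e',n_j)\ge g$, i.e. $e'\ge n_0$, which over all such $e'$ amounts to $n_0\le\lfloor g/(r+1)\rfloor+1$. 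Finally I would observe that, given weak monotonicity, the inequality $n_0\le n_r+1$ already forces every $n_j$ into $\{n_r,n_r+1\}$ and hence $n_r=\lfloor g/(r+1)\rfloor$; so the conjunction of the two displayed conditions is equivalent to $n_0\le n_r+1$, which is exactly the assertion that $0$ occurs at most one time more than $r$ in $\vec{\delta}$.

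The main obstacle I anticipate is the bookkeeping in the last step — keeping the floor functions and the edge cases (small $g$, $e'\le 0$, and the tightest choice of $g'$ for each $e'$) straight — together with making the greedy reachability argument genuinely airtight; the conceptual content (monotonicity of the counts and the one-step extension criterion) is short, but turning "we can fill up the counts" into a clean equivalence requires some care.
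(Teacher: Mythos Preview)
Your argument is correct and takes a genuinely different, more systematic route than the paper's. The paper handles each direction directly and briefly: for the ``if'' direction it observes that $n_0\le n_r+1$ forces every count to be $\lfloor g/(r+1)\rfloor$ or $\lfloor g/(r+1)\rfloor+1$, and then extends by cycling through $0,1,\dots,r$ starting at the first deficient value; for the ``only if'' direction (argued by contradiction), it tests extendability at $g'=g$ with $e'=\lfloor g/(r+1)\rfloor$ to force $n_r\ge\lfloor g/(r+1)\rfloor$, and then at $g''=(r+1)\lceil g/(r+1)\rceil$ with $e''=\lceil g/(r+1)\rceil$, where every value must occur exactly $e''$ times, to obtain the contradiction with $n_0\ge n_r+2$.

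Your approach instead first proves a reachability lemma---the occurrence-count vectors obtainable by extending $\vec{\delta}$ are exactly the weakly decreasing tuples dominating $(n_j)$---via the single-step extension criterion and a greedy filling, and then reduces extendability to the uniform inequality $\sum_j\max(n_j,e')\le g'$ over all admissible $(g',e')$. Optimizing over $e'$ recovers precisely the same two critical test values the paper uses, but now as the outcome of a calculation rather than an inspired guess. The payoff is an intermediate characterization of reachable count vectors that is of independent interest and makes the two bounds $n_r\ge\lfloor g/(r+1)\rfloor$ and $n_0\le\lfloor g/(r+1)\rfloor+1$ appear symmetrically; the cost is a longer argument and slightly more delicate floor-function bookkeeping than the paper's two explicit witnesses. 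Your anticipated obstacles (the greedy step and the edge cases) are all handled correctly as you describe them.
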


\begin{proof} 
In the language of Young Tableaux introduced at the start of section \ref{sec:elem-1}, the condition that 0 appears at most one more time than $r$ 
can be written as the column corresponding to 0 is at most one taller than the column corresponding to $r$.
Equivalently, the Young Tableau is as close to a rectangle as it can possibly be for the given $g$.
Then, for any $g'\ge g$, one can add the additional $g'-g$ indices while keeping the tableau again as close to a rectangle as possible for that $g'$,
so the sequence is extendable.

On the other hand, assume that  the column corresponding to 0 has height $l$ that is at least two larger than the height of the column corresponding to $r$.
Let us say there are $t$ elements in the last bottom row.
Define $g'=(l-1)(r+1)+t-1,\ d'=g'+r-l+1$. By assumption $g\le (l-1)(r+1) +t-2\le g'$. From the definition of $d'$, $g'-d'+r=l-1$. 
Any Young Tableau associated to $g',d',r$ must contain the $(r+1)(g'-d'+r)$ rectangle, hence it cannot contain the last element of the bottom row of the initial Tableau.
Hence, the $\delta$ sequence is not extendable.
\end{proof}

The following notion will be useful for verifying the steadiness condition.

\begin{defn}\label{defn:unimaginative}
For a given $m$, we say $c=(c_2,\dots,c_g) \in \ZZ^{g-1}$ is \textbf{unimaginative} if $c_{i+1}-c_{i} \geq m$ for $i=2,\dots,g-1$.
\end{defn}

\begin{prop}\label{prop:unimaginative} If $c=(c_2,\dots,c_g)$ is unimaginative, then for any $\vec{\delta}$ we have $T(\vec{\delta})$ steady with respect to $w(c)$.
\end{prop}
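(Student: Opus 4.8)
The plan is to unwind the definition of steadiness (Definition \ref{defn:table-erasures}) and reduce it to a statement about the sign pattern of the differences $c'_i - c_i$, as flagged in Remark \ref{rem:zeroing-pattern}. Fix a row $\vec{j} = (j_1,\dots,j_m)$ of $T(\vec{\delta})$, and set $w' = (a^2_{\vec{j}},\dots,a^g_{\vec{j}})$ as in the definition; we must show $(w',w)$ is steady, i.e. that there exists an index $i$ with $c'_{i'} - c_{i'} \le 0$ for $i' \ge i$ and $c'_{i'} - c_{i'} \ge 0$ for $i' < i$, where I write $c'_{i'} = a^{i'}_{\vec{j}}$ and $c_{i'}$ for the entries of $w$. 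Equivalently (the content of Remark \ref{rem:zeroing-pattern}), it suffices to show that the sequence of signs of $c'_{i'} - c_{i'}$ is weakly decreasing in the sense that a negative value is never followed by a nonnegative one; i.e. once $a^{i}_{\vec{j}} < c_{i}$ happens, it persists.

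The key computation is to control how $a^{i}_{\vec{j}}$ varies with $i$. By Definition \ref{defn:tensor-table}, $a^i_{\vec{j}} = \sum_{n=1}^m a^i_{j_n}$, and for each fixed $j = j_n$ the sequence $a^i_j$ satisfies $a^{i+1}_j = d - b^i_j$, which equals $a^i_j + 1$ when $j \neq \delta_i$ and $a^i_j$ when $j = \delta_i$. Hence $a^{i+1}_j - a^i_j \in \{0,1\}$ for each $n$, and consequently $a^{i+1}_{\vec{j}} - a^i_{\vec{j}} \in \{0,1,\dots,m\}$, i.e. $c'_{i+1} - c'_i \le m$. On the other hand, the unimaginative hypothesis says precisely $c_{i+1} - c_i \ge m$. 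Subtracting, $(c'_{i+1} - c_{i+1}) - (c'_i - c_i) \le 0$, so the sequence $i \mapsto c'_i - c_i$ is (weakly) decreasing. A weakly decreasing integer sequence certainly has the property that once it is $\le 0$ it stays $\le 0$, so defining $i$ to be the least index with $c'_i - c_i \le 0$ (or $i = g+1$ if no such index exists) gives exactly the steadiness condition: $c'_{i'} - c_{i'} \ge 0$ for $i' < i$ and $\le 0$ for $i' \ge i$.

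One point to check carefully is the indexing convention: $w$ has components $c_2,\dots,c_g$ (indices $2$ through $g$), $w'$ likewise has components $a^2_{\vec{j}},\dots,a^g_{\vec{j}}$, and the unimaginative condition only asserts $c_{i+1} - c_i \ge m$ for $i = 2,\dots,g-1$, which is exactly the full range of consecutive differences among $c_2,\dots,c_g$; so the monotonicity argument applies to the entire sequence and no boundary case is missing. I do not expect any real obstacle here — the only mild subtlety is making sure the estimate $a^{i+1}_j - a^i_j \le 1$ is applied for every $n = 1,\dots,m$ before summing, and that the definition of steadiness is matched up with the correct direction of the inequality (which way the signs flip). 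Since this is a short combinatorial argument, the "hard part" is purely bookkeeping: lining up Definition \ref{defn:table-erasures}, Definition \ref{defn:tensor-table}, and Definition \ref{defn:unimaginative} so the inequalities point the right way.
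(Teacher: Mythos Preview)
Your proposal is correct and essentially identical to the paper's proof: both show that $a^{i+1}_{\vec{j}} - a^i_{\vec{j}} \le m$ (the paper phrases this via $a^i_{\vec{j}} \ge md - m - b^i_{\vec{j}} = a^{i+1}_{\vec{j}} - m$, you via summing $a^{i+1}_{j_n} - a^i_{j_n} \in \{0,1\}$), combine with $c_{i+1}-c_i \ge m$ to conclude $c'_i - c_i$ is nonincreasing, and infer steadiness.
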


\begin{proof} 
Recall that by definition $T(\vec{\delta})$ is steady with respect to $w$ if for each multi-index of a section $\vec{j}$, setting $c'=(a^2_{\vec{j}},\dots,a^g_{\vec{j}})$  
we have that $(w(c'),w(c))$ is steady that is, there exists $i$ such that for  $ l< i, c_l\le a^l_{\vec{j}}$  for  $ l\ge i, a^l_{\vec{j}}\le c_l $.
By construction of the table associated to a given $\delta$, for $i<g$ we have
$$a^i_{\vec{j}} \geq md-m-b^i_{\vec{j}} = a^{i+1}_{\vec{j}}-m,$$
so the sequence $c'_i-c_i$ is nonincreasing, and $(w',w)$ is steady.
\end{proof}

We have the following basic observation on `change of degree':  

\begin{prop}\label{prop:invariance} Given $(g,r,d,m), \vec{\delta}$ a $(g,r,d)$-sequence, $w, d' > d$ and $a$ such that $0 \leq a \leq d'-d$, then
$\vec{\delta}$ is also an $a$-shifted $(g,r,d')$-sequence.
If we obtain $w'$ from $w$ by adding $ma$ to every entry, then $T_w(\vec{\delta})$ is $N$-expungeable for $\vec{\delta}$ as a  $(g,r,d)$-sequence if and only if 
$T_{w'}(\vec{\delta})$ is $N$-expungeable for $\vec{\delta}$ as an $a$-shifted $(g,r,d')$-sequence.
\end{prop}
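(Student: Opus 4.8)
The plan is to reduce the whole statement to a single observation: replacing $\vec{\delta}$, viewed as a $(g,r,d)$-sequence, by $\vec{\delta}$, viewed as an $a$-shifted $(g,r,d')$-sequence, adds the constant $ma$ to every $a$-entry of all the associated tables and the constant $m(d'-d-a)$ to every $b$-entry, and changes nothing else; while the notion of $N$-expungeability refers only to data invariant under such global shifts. First I would dispose of the first assertion: the condition on the $\delta_i$ in the definition of a $(g,r,d)$-sequence makes no mention of $d$, and since $d'>d$ one has $(r+1)(r+g-d') \le (r+1)(r+g-d) \le g$, so $\vec{\delta}$ is still a $(g,r,d')$-sequence; moreover every value in $\{0,\dots,r\}$ occurs at least $r+g-d = (r+g-d')+(d'-d) \ge (r+g-d')+a$ times in $\vec{\delta}$, which is precisely the extra requirement for $\vec{\delta}$ to be $a$-shifted.

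Next I would pin down the shift. Write $a^i_j,b^i_j$ for the entries of $T'(\vec{\delta})$ with $\vec{\delta}$ a $(g,r,d)$-sequence, and $\tilde a^i_j,\tilde b^i_j$ for those with $\vec{\delta}$ an $a$-shifted $(g,r,d')$-sequence. An induction on $i$ using Definition \ref{defn:tensor-table} -- base case $\tilde a^1_j = a+j = a^1_j+a$, and the recursions $\tilde b^i_j \in \{d'-\tilde a^i_j,\, d'-1-\tilde a^i_j\}$, $\tilde a^{i+1}_j = d'-\tilde b^i_j$ -- gives $\tilde a^i_j = a^i_j+a$ and $\tilde b^i_j = b^i_j+(d'-d-a)$ for all $i,j$; since $0 \le d'-d-a$ and $0 \le a^i_j \le d$, all entries stay in $[0,d']$, so the shifted table is legitimate. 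Summing over the $m$ coordinates of a row $\vec{j}$ then gives $\tilde a^i_{\vec{j}} = a^i_{\vec{j}}+ma$ and $\tilde b^i_{\vec{j}} = b^i_{\vec{j}}+m(d'-d-a)$ in every column $i$.

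Then I would check that $T_w(\vec{\delta})$ and $T_{w'}(\vec{\delta})$ have the same erasure pattern. Fix a row $\vec{j}$ and set $u_{\vec{j}} = (a^2_{\vec{j}},\dots,a^g_{\vec{j}})$ and $\tilde u_{\vec{j}} = (\tilde a^2_{\vec{j}},\dots,\tilde a^g_{\vec{j}})$; by Definition \ref{defn:table-erasures} the $i$th entry of this row is erased in $T_w(\vec{\delta})$ exactly when $\epsilon^i_{u_{\vec{j}},w}=0$, and in $T_{w'}(\vec{\delta})$ exactly when $\epsilon^i_{\tilde u_{\vec{j}},w'}=0$. By the previous step $\tilde u_{\vec{j}}$ is obtained from $u_{\vec{j}}$ by adding $ma$ to each coordinate, and $w'$ is obtained from $w$ in the same way; since $\epsilon^i_{\cdot,\cdot}$ of Definition \ref{defn:zeroing-pattern} depends on its two arguments only through their coordinatewise differences, we get $\epsilon^i_{\tilde u_{\vec{j}},w'} = \epsilon^i_{u_{\vec{j}},w}$ for every $i$, so the two tables have exactly the same blank entries.

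Finally I would run through Rules (i)--(vii) of Definition \ref{defn:row-dropping} and note that every hypothesis occurring there is of one of the following kinds: a statement about which entries are erased (settled above); an order comparison or an equality among the $a^i_{\vec{j}}$, or among the $b^i_{\vec{j}}$, within a fixed column; a statement about a difference $a^{i'}_{\vec{j}^1}-a^{i'}_{(\delta_{i'},\dots,\delta_{i'})}$ of two entries in a fixed column (the "$-1$" in Rule (vi)); or a purely combinatorial condition on $\vec{j}$ and the $\delta_i$ (counts of coordinates equal to a given value, inequalities such as $j_1^e<\delta_i<j_2^e$, equalities $\delta_{i'}=\delta_i$, and the like), involving $d$ nowhere. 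Adding a fixed constant to all $a$-entries and a fixed constant to all $b$-entries preserves all such orderings, equalities and differences and leaves the combinatorial data alone, so by the two preceding steps a choice of $N$ rows together with a sequence of rule-applications expunging $T_w(\vec{\delta})$ is, verbatim, a valid expunging of $T_{w'}(\vec{\delta})$, and conversely; this gives the asserted equivalence (with the same $N$, and in fact the same chosen rows). The only point demanding genuine care is this closing inspection -- in particular confirming that the intricate Rules (v), (vi), (vii) never reference an absolute table value in place of a comparison, a difference, or an erasure -- but each individual check is routine.
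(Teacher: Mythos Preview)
Your proof is correct and follows the same approach as the paper. The paper's argument is a two-sentence sketch stating exactly your key observation---that passing to the $a$-shifted $(g,r,d')$-table adds $ma$ to every $a^i_{\vec{j}}$ and $m(d'-d-a)$ to every $b^i_{\vec{j}}$, and that the expungeability rules are invariant under such uniform shifts---while you have carefully supplied the details (the induction on $T'(\vec{\delta})$, the invariance of the $\epsilon^i$, and the rule-by-rule verification) that the paper leaves to the reader.
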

\begin{proof} As $\vec{\delta}$ is a $(g,r,d)$-sequence, its Young Tableau contains an $(r+1)(g-d+r)$ rectangle.
The condition $a \leq d'-d$ ensures that it also contains an $(r+1)(g-d'+a+r)$ rectangle.
Hence, $\vec{\delta}$ is also an $a$-shifted $(g,r,d')$-sequence.

By definition of the $a$-shifted table, $T_{w'}(\vec{\delta})$ is obtained from $T_w(\vec{\delta})$ by adding $ma$ to each $a^i_{\vec{j}}$, and
adding $m(d'-d-a)$ to each $b^i_{\vec{j}}$. 
One checks directly that the rules for expungeability are invariant under this operation.
\end{proof}

Below is our basic result on extending injective cases to higher genus.

\begin{prop}\label{prop:injective-extend} Given $(g,r,d,m)$, satisfying
\[ r\ge 3, m\ge 2, g\ge (r+1)(g-d+r), \binom{r+m}{m} \leq md+1-g\]
suppose that there exists a $(g,r,d)$-sequence $\vec{\delta}$ and a $c=(c_2,\dots,c_g)$ such that $T_{w(c)}(\vec{\delta})$ is $\binom{r+m}{m}$-expungeable.
Then for all $(g',r,d',m)$ with $g' \geq g$ and $g'-d' \leq g-d$, there exists a $(g',r,d')$-sequence $\vec{\delta}'$ and a $w'=(c'_2,\dots,c'_g)$ such that 
$T_{w'}(\vec{\delta}')$ is $\binom{r+m}{m}$-expungeable. In particular, the Maximal Rank Conjecture holds in all these cases. 

If further the above holds with $\vec{\delta}$ extendable, then the condition $g'-d' \leq g-d$ above is unnecessary. 
In either situation, if the chosen $w$ was unimaginative, then the new $w$ may also be chosen to be unimaginative.
\end{prop}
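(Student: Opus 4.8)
The plan is to reduce everything to a single basic construction: given an expungeable table for a case $(g,r,d)$, I want to produce an expungeable table for $(g',r,d')$ by (a) lengthening the $(g,r,d)$-sequence to a $(g',r,d')$-sequence, and (b) choosing the new $w'$ so that the extra columns of $T_{w'}(\vec{\delta}')$ are entirely erased, and the old columns behave exactly as before. Then the row-dropping procedure that worked for $T_w(\vec{\delta})$ works verbatim for the old columns, and the new columns are irrelevant because all their entries are blank.

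First I would handle the reduction in the total degree: using Proposition \ref{prop:invariance}, I may replace $(g,r,d)$ by an $a$-shifted $(g,r,d+a)$ situation for any $0 \le a \le d'-d$, without affecting expungeability. The inequality $g'-d' \le g-d$ is exactly what is needed so that, after possibly passing to such a shift, we have $d' - d \ge g' - g \ge 0$, i.e. the degree we must reach is at least as large as the length increase; this is the arithmetic that makes room for inserting new sequence entries that increment the degree. (In the extendable case, by Definition \ref{defn:extendable} we can lengthen $\vec{\delta}$ to a valid $(g',r,d')$-sequence for \emph{any} admissible $(g',d')$ with $g' \ge g$, so this constraint disappears.) Second, I would specify the extension $\vec{\delta}'$: keep $\delta_1,\dots,\delta_g$ as the first $g$ entries, and append $g'-g$ further entries; the point of Proposition \ref{prop:extendable}'s cycling construction (or, in the non-extendable case, a direct count) is that one can append entries so that the result is a genuine $(g',r,d')$-sequence — I need to check that the defining condition (no smaller integer occurs fewer times, and every value occurs at least $r+g'-d'$ times) survives, which is where I pay attention to the degree bookkeeping above.

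Third, the key step: choosing $w'$. I would take $w' = (c_2,\dots,c_g, c_g', c_{g+1}',\dots)$ where the first $g-1$ entries agree with $w$ (up to a uniform shift by $ma$ coming from Proposition \ref{prop:invariance}), and the new entries $c_{g}', c_{g+1}',\dots$ are chosen large — pushed up to (or past) $md'$ — so that, by the second part of Proposition \ref{prop:zeroing-pattern} and the sign analysis of Remark \ref{rem:zeroing-pattern}, for every row $\vec{j}$ the quantity $\epsilon^i_{w',w}$ vanishes for all $i > g$. Concretely, I would make $w'$ unimaginative on the new stretch (choosing consecutive differences $\ge m$, indeed as large as needed), which by Proposition \ref{prop:unimaginative} keeps $T(\vec{\delta}')$ steady and, more importantly here, forces the new columns' $a^i_{\vec{j}}$ to exceed the corresponding $c_i'$, so those entries are erased. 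I must also check the new entries do not disturb which entries of the \emph{old} columns are erased: because the old $w$-entries are unchanged and the new tail only adds terms to the partial sums $\sum_{j'=i+1}^{g'}(c'_{j'}-c_{j'})$ for indices $i \le g$ that are all of the same sign (the steadiness/unimaginative setup guarantees monotonicity), the location of the minimum among $i \le g-1$ is unaffected, so $\epsilon^i_{w',w'}$ for $i \le g-1$ equals the old $\epsilon^i_{w',w}$. This is the step I expect to require the most care — verifying that erasure in the original block is genuinely preserved — and it is essentially a bookkeeping check against the explicit formulas in Definition \ref{defn:zeroing-pattern} and Remark \ref{rem:zeroing-pattern}.

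Finally, with $T_{w'}(\vec{\delta}')$ having its last $g'-g$ columns entirely blank and its first $g$ columns identical (after the degree shift, which by Proposition \ref{prop:invariance} preserves all the row-dropping rules) to those of $T_w(\vec{\delta})$, the same choice of $\binom{r+m}{m}$ rows and the same sequence of applications of Rules (ii)–(vii) expunges $T_{w'}(\vec{\delta}')$; hence it is $\binom{r+m}{m}$-expungeable. Applying Theorem \ref{thm:main-reduction} gives the Maximal Rank Conjecture in all these cases. The ``unimaginative'' clause is immediate from the construction, since we took the new $w'$ to agree with $w$ on the old block (which was unimaginative by hypothesis) and chose large consecutive differences on the new block. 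The extendable case is the same argument with the degree constraint removed, using Definition \ref{defn:extendable} directly in place of the ad hoc lengthening. \qed
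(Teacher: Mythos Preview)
Your approach is essentially the same as the paper's: extend $\vec{\delta}$ to a $(g',r,d')$-sequence (by appending zeroes when $g'-d'\le g-d$, or by the extendability hypothesis otherwise), use Proposition~\ref{prop:invariance} to pass from degree $d$ to $d'$, and append sufficiently large entries to $w$ so that the new columns of $T_{w'}(\vec{\delta}')$ are entirely erased while the old ones are unchanged; the original row-dropping then carries over verbatim. Two minor points: the paper simply takes $a=0$ in Proposition~\ref{prop:invariance} (viewing $\vec{\delta}$ directly as a $(g,r,d')$-sequence), so your general $a$-shift is unnecessary; and your sentence ``forces the new columns' $a^i_{\vec{j}}$ to exceed the corresponding $c_i'$'' has the inequality backward---erasure occurs because the appended $c_i$ exceed the $a^i_{\vec{j}}$ (cf.\ Remark~\ref{rem:zeroing-pattern}), not the reverse.
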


\begin{proof} Under either hypothesis, we have that $\delta$ can be extended to a $(g',r,d')$-sequence $\delta'$: 
in the first case, the condition $g'-d' \leq g-d$ allows us to extend simply by adding $g'-g$  zeroes, while in the second case we can extend by hypothesis.
 Moreover, we have that $\delta$ is a valid $(g,r,d')$-sequence, and Proposition \ref{prop:invariance} says that the $\binom{r+m}{m}$-expungeability of
$T_w(\vec{\delta})$ does not depend on whether we view $\vec{\delta}$ as a $(g,r,d)$-sequence or a $(g,r,d')$-sequence
 (the only difference is that the  $b_j^{\vec{i}}$ are all translated by $m(d'-d)$). 
Then by appending sufficiently large (e.g., larger than $md'$) numbers to $c$, we obtain $c' \in \ZZ^{g'-1}$ with the property that $T_{w(c')}(\vec{\delta'})$
 is exactly the same as $T_{w(c)}(\vec{\delta})$, when $\vec{\delta}$ is considered as a $(g,r,d')$-sequence: the  entries of $T_{w(c')}(\vec{\delta'})$ after the first $g$ columns are all  erased.
  Then the $\binom{r+m}{m}$-expungeability of $T_{w'}(\vec{\delta'})$ follows.

If $w$ was unimaginative, the above construction can clearly also make $w'$ unimaginative.
\end{proof}

We conclude by proving that for any fixed $m,r$ we have injectivity for all $g$ sufficiently large. 
Although the bound is very far from sharp (and is worse than that obtained in Larson \cite{la4}),
the proof is brief and we include it as an illustration of a different sort of approach to applying Proposition  \ref{cor:lls-reduction} than the ones which we will make below.

\begin{prop}\label{prop:big-g-inject} With $m,r$ fixed, if we have $g,d$ with $\rho \geq 0$ and 
$$g \geq (r+1)\left((m+1)^{r-1}-r\right),$$
then the Maximal Rank Conjecture holds for $(g,r,d,m)$.
Moreover, a general chain of genus-$1$ curves is not in the closure of  the locus on $\cM_g$ for which the maximal rank condition fails.
\end{prop}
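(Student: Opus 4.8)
The plan is to apply Theorem~\ref{thm:main-reduction} with a nondecreasing $(g,r,d)$-sequence $\vec\delta$ and an unimaginative $w$. First I would settle the numerics: a direct estimate shows that, in the range $r+g>d$ of interest, the hypotheses $\rho\ge 0$ and $g\ge (r+1)\bigl((m+1)^{r-1}-r\bigr)$ force $\binom{r+m}{m}\le md+1-g$ (bound $d$ below by $r+g-\lfloor g/(r+1)\rfloor$ using $\rho\ge 0$, and compare with $\binom{r+m}{m}$), so we are in the injective range and must take $N=\binom{r+m}{m}$. By Proposition~\ref{prop:invariance} it suffices to treat the smallest admissible degree for the given $g$; thus I may take $\vec\delta$ to be the nondecreasing sequence in which each of $0,\dots,r$ occurs either $\lfloor g/(r+1)\rfloor$ or $\lceil g/(r+1)\rceil$ times, so that the $r+1$ \emph{blocks} of columns on which $\delta_i$ is constant each have length at least $(m+1)^{r-1}-r$.

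Next I would choose $w$ unimaginative. Then Proposition~\ref{prop:unimaginative} gives steadiness of $T(\vec\delta)$ with respect to $w$ for free, so the assertion about the closure of the bad locus will follow from Theorem~\ref{thm:main-reduction} as soon as $\binom{r+m}{m}$-expungeability of $T_w(\vec\delta)$ is proved. Moreover, by Remark~\ref{rem:zeroing-pattern}, for an unimaginative $w$ the non-erased entries of each row of $T_w(\vec\delta)$ occupy a \emph{contiguous} block of columns, namely those columns $i$ near which the sequence $i\mapsto a^i_{\vec j}$ crosses $i\mapsto c_i$. So the freedom in choosing the $c_i$ is precisely the freedom to schedule, for each row $\vec j$, a contiguous interval of columns in which that row remains available to be dropped.

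The core of the argument, and the part I expect to be the main obstacle, is the expungeability claim itself. I would select all $\binom{r+m}{m}$ rows and eliminate them by sweeping through the blocks from left to right, peeling rows off one at a time with Rule~(ii) (the minimal-$a^i_{\vec j}$ rule), using Rule~(iii) symmetrically from the right and Rules~(iv), (vi), (vii) to mop up the handful of rows where ties among the $a^i_{\vec j}$ genuinely persist. The mechanism is that on a block where $\delta_i\equiv j$ the entry $a^i_{\vec j}$ depends affinely on the position within the block, with slope $m-\mu_j(\vec j)$, where $\mu_j(\vec j)$ is the number of coordinates of $\vec j$ equal to $j$; hence as one moves across the block the row of minimal $a$-value among those not yet dropped (and not erased) changes predictably, and in each successive column one more row becomes uniquely minimal and can be removed. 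Two points must be pinned down in tandem: (a) that a block of length $(m+1)^{r-1}-r$ really does accommodate all the rows that need to be peeled in it — the exponent $r-1$ appearing because, for the question of which column of a block a given row is eliminated in, a row is visible only through the $r-1$ ``intermediate'' multiplicities $\mu_{j'}(\vec j)$, each ranging over $\{0,\dots,m\}$; and (b) the erasure bookkeeping — one must write down an explicit choice of the $c_i$ (for instance increasing by $m$ within each block and by a controlled amount at each block boundary) for which the column chosen to invoke Rule~(ii) on each row lies inside that row's contiguous non-erased interval, and check this for all $\binom{r+m}{m}$ rows at once. Carrying out (a) and (b) together is the only substantial work; everything else is routine given Theorem~\ref{thm:main-reduction} and Propositions~\ref{prop:invariance} and~\ref{prop:unimaginative}.
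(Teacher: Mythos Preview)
Your approach is quite different from the paper's, and the paper's is considerably simpler. Rather than spreading the row-dropping across many columns with a nondecreasing $\vec\delta$ and unimaginative $w$, the paper concentrates \emph{all} the work in a single column. It designs $\vec\delta$ (which is \emph{not} nondecreasing) so that at one column $i_0$ the values $a^{i_0}_j$ of $T'(\vec\delta)$ are $0,1,m+1,(m+1)^2,\dots,(m+1)^{r-1}$ (up to a shift). Then $a^{i_0}_{\vec j}=\mu_1+\mu_2(m+1)+\cdots+\mu_r(m+1)^{r-1}$ is a base-$(m+1)$ expansion in the multiplicities, hence all $\binom{r+m}{m}$ values are distinct. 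The paper then takes $w$ with $c_i=0$ for $i\le i_0$ and $c_i=md$ for $i>i_0$; this is steady (trivially, for any $\vec\delta$) though not unimaginative, and it forces every row to appear in column $i_0$ of $T_w(\vec\delta)$. Repeated application of Rule~(ii) alone then drops every row. The bound $g\ge(r+1)\bigl((m+1)^{r-1}-r\bigr)$ arises because that is how many columns are needed to engineer the desired column $i_0$ while keeping $\vec\delta$ a valid $(g,r,d)$-sequence.

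Your sketch, by contrast, leaves the central step --- the expungeability argument --- essentially undone. The sentence ``a row is visible only through the $r-1$ intermediate multiplicities'' is not clearly correct (there are $r+1$ multiplicities $\mu_0,\dots,\mu_r$; even fixing $\mu_j$ for the current block leaves $r$ others), so the link between your scheduling scheme and the specific bound $(m+1)^{r-1}-r$ is not established. You also invoke Rules~(vi) and (vii) to handle ties, but those rules have rather restrictive hypotheses (e.g.\ Rule~(vii) requires $m=2$) that you have not checked. It may be that a block-by-block peeling argument can be made to work, but as written it is a plan, not a proof --- and the paper's one-column base-$(m+1)$ trick sidesteps all of this bookkeeping entirely.
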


\begin{proof} We will show that with the stated lower bound, we can always produce a $(g,r,d)$-sequence $\vec{\delta}$ so that for some
column $i_0$, the entries $a^{i_0}_{\vec{j}}$ of $T(\vec{\delta})$ are all  distinct. 
Observe that this will be the case if the ${i_0}$th column of $T'(\vec{\delta})$ is equal to $0,1,m+1,(m+1)^2,\dots,(m+1)^{r-1}$,
or more generally, (for some $m$)
\[  a,a+1,a+m+1,a+(m+1)^2,\dots,a+(m+1)^{r-1}\] .
Thus, we take $\vec{\delta}$ to be the sequence whose first $(m+1)^{r-1}-r$ entries are $0$, and then followed by $(m+1)^{r-1}-(m+1)^{i-1}-(r-i)$  entries equal to $i$, for $i=1,\dots,r-1$.
We then take the next $(m+1)-2$ entries equal to $2$, and then followed by $(m+1)^{i-1}-i$ entries equal to $i$ for $i=3,\dots,r$. 
This determines the first  $(r+1)\left((m+1)^{r-1}-r\right)$ entries of $\vec{\delta}$, with each entry occurring $(m+1)^{r-1}-r$ times. 
Any remaining entries of  $\vec{\delta}$ can be chosen to cycle from $0$ through $r$.

Then set $i_0$ to be the column immediately after the first sequence of $(r-1)$s occurring in $\vec{\delta}$, so that $i_0=r(m+1)^{r-1}-\binom{r}{2}-\sum_{i=1}^{r-2} (m+1)^i$.
By construction, the entries $a^{i_0}_j$ of $T'(\vec{\delta})$ have the desired form, so the entries $a^{i_0}_{\vec{j}}$ of $T(\vec{\delta})$ are all distinct, as desired. 
Finally, let  $c=(c_2,\dots,c_g)$ with $c_i=0$ for $i \leq i_0$ and $c_i=md$ for  $i>i_0$. 
Note that this is steady with respect to $T(\vec{\delta})$ (indeed, for any $\vec{\delta}$), and the effect is that every row occurs in the $i_0$th column of $T_w(\vec{\delta})$. 
We may then apply Lemma \ref{rulesii-v} (b)  repeatedly to $T_w(\vec{\delta})$ to prove the desired statement.
\end{proof}

\section{The case of quadrics}\label{sec:m2}

In this section, we use Proposition  \ref{cor:lls-reduction} to prove the Maximal Rank Conjecture for the $m=2$ case. 
The proof uses reduction constructions to show that we can always reduce either to smaller $r$ or to one of a sequence of `critical' cases which are in particular as close 
as possible to being simultaneously injective and surjective. 
Empirically, these critical cases are the most difficult cases to handle.

\begin{thm}\label{thm:m2} The Maximal Rank Conjecture holds in the $m=2$ case. More specifically, for any given $(g,r,d)$ with $\rho \geq 0$ and $g-d+r>0$, 
a general chain of  genus-$1$ curves is not in the closure of the locus on $\cM_g$ where the maximal rank condition fails.
\end{thm}
Explicitly, for every such $(g,r,d)$  there is a $(g,r,d)$-sequence $\vec{\delta}$ and an unimaginative $c \in \ZZ^{g-1}$ such that $T_{w(c)}(\vec{\delta})$ is 
$\min\left(\binom{r+2}{2},2d+1-g\right)$-expungeable.

In order to keep the overall structure of the proof as clear as possible, we will first state the necessary preliminary results, then give the proof of the theorem, and
finally prove the preliminary results. 
In fact, we will also prove the statement of the theorem for many cases where $g-d+r \leq 0$, but to keep  the statement as simple as possible we do not list precisely which cases
are handled by our constructions.

The following lemma constitutes the basic reduction used for surjective cases.

\begin{lem}\label{lem:basic-reduction} Given $(g,r,d)$ with $\rho \geq 0$, set $t=\min(\rho+(g-d+r), r-1)$.
Define $r'=r-1$, $g'=g-t$ and $d'=d-(t+1)$.
Suppose that there is a  $(g',r',d')$-sequence $\vec{\delta}'$ having no more than $r'$ of any given integer, and an unimaginative $c'=(c'_2,\dots,c'_{g'}) \in \ZZ^{g'-1}$ such that 
$T_{w(c')}(\vec{\delta}')$ is $N$-expungeable, and $c'_2 \geq 2$. 
Then there is a $(g,r,d)$-sequence $\vec{\delta}$ having no more than $r$ of any given integer, and an unimaginative $c=(c_2,\dots,c_g) \in \ZZ^{g-1}$ such that 
$T_{w(c)}(\vec{\delta})$ is $(N+t+2)$-expungeable and $c_2 \geq 2$. 
In particular, if  $T_{w(c')}(\vec{\delta}')$ is $(2d'+1-g')$-expungeable, then $T_{w(c)}(\vec{\delta})$ is $(2d+1-g)$-expungeable. 

Moreover, if either $\binom{r+2}{2}>2d+1-g$ or $\binom{r+2}{2} =2d+1-g$ and $\rho>0$, then $\binom{r'+2}{2} \geq 2d'+1-g'$.
\end{lem}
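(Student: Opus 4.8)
We begin with the numerical assertions. Writing $s := r+g-d$, one computes directly that $r'+g'-d' = (r-1)+(g-t)-(d-t-1) = s$, and that $\rho' = (g-t) - (r'+1)(r'+g'-d') = \rho + (s-t)$, which is nonnegative because $t = \min(\rho+s,\,r-1) \le \rho + s$. For the closing inequality, note that $\binom{r'+2}{2} = \binom{r+1}{2} = \binom{r+2}{2}-(r+1)$ and $2d'+1-g' = (2d+1-g)-(t+2)$, so $\binom{r'+2}{2} \ge 2d'+1-g'$ is equivalent to $\binom{r+2}{2}-(2d+1-g) \ge (r+1)-(t+2) = r-1-t$. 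The existence of a $(g',r',d')$-sequence $\vec\delta'$ with at most $r'$ copies of each integer forces $s \le r-1$. If $\rho+s \ge r-1$ the right-hand side is nonpositive and we are done; if $\rho+s < r-1$ a short computation in $r,s,\rho$ shows the inequality holds unless $\binom{r+2}{2}=2d+1-g$ and $\rho=0$, which is excluded by hypothesis.

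For the reduction itself, I adjoin to $\vec\delta'$ a block of $t$ new columns that introduces the new top value $r$. Explicitly, let $\vec\delta$ be $\vec\delta'$ followed by a block of length $t$ consisting of $s$ copies of $r$ together with $t-s$ ``filler'' entries in $\{0,\dots,r-1\}$; since $s \le r-1$ and $t-s \le r-1$, the filler can be chosen so that $\vec\delta$ is a genuine $(g,r,d)$-sequence with at most $r$ copies of each integer, with the $r$'s placed so as to respect the staircase condition. Let $w$ be $w'$ followed by $t$ entries that increase by at least $2$ at each step and are large; then $w$ is unimaginative and $c_2 = c'_2 \ge 2$. The key structural point, immediate from the recursions defining $T'$ and $T$ together with Proposition \ref{prop:invariance} (applied with shift $0$, the only change being the degree), is that on its first $g'$ columns the submatrix of $T(\vec\delta)$ on the $\binom{r+1}{2}$ ``old'' rows --- those indexed by pairs $(j_1,j_2)$ with $j_2 \le r-1$ --- coincides with $T(\vec\delta')$ up to a uniform translation of all $b$-entries, which does not affect any row-dropping rule; and, since the adjoined $c_i$ are large, for these old rows the erasure pattern on the first $g'$ columns is exactly that of $T_{w'}(\vec\delta')$, while on each of the $t$ new columns every old row is erased.

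Granting this, take as distinguished rows the $N$ rows witnessing $N$-expungeability of $T_{w'}(\vec\delta')$ together with $t+2$ of the $r+1$ new rows $(j_1,r)$ --- possible since $t \le r-1$. After Rule (i) discards the remaining rows, we eliminate the $t+2$ distinguished new rows using the $t$ new columns: on the columns where $\delta_i = r$ the value $a_r$ ceases to grow while $a_{j_1}$ keeps growing and is increasing in $j_1$, so among the surviving new rows $a^i_{(j_1,r)}$ always attains a strict minimum, and Rule (ii) peels them off one at a time, with Rule (iv) finishing the last two. Only the $N$ distinguished old rows then remain; they live on the first $g'$ columns, where they reproduce $T_{w'}(\vec\delta')$, so the row-dropping procedure for $T_{w'}(\vec\delta')$ eliminates them verbatim. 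This establishes $(N+t+2)$-expungeability, and the ``in particular'' follows by setting $N = 2d'+1-g'$ and using $2d+1-g = (2d'+1-g')+(t+2)$.

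The main obstacle is making the choices of the adjoined blocks of $w$ and $\vec\delta$ mutually compatible: the adjoined $c_i$ must be large enough that the new columns erase the old rows and that the minimum in the definition of $\epsilon^i$ does not migrate out of the first $g'$ columns (so the inherited erasure pattern is preserved), yet the spacing must keep $w$ unimaginative and must keep the distinguished new rows non-erased on the new columns long enough to be peeled in the order forced by Rule (ii). Verifying that these constraints can be met simultaneously --- and that no surviving new row blocks a step of the inherited procedure on the old columns --- is the technical core of the proof.
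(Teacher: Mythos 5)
Your numerical verifications ($r'+g'-d'=r+g-d$, $\rho'=\rho+(s-t)\ge 0$ with $s=r+g-d$, and the final inequality via Proposition \ref{prop:m2-difference}) are correct, and your overall strategy --- adjoin a block of $t$ columns introducing a new extreme value, peel off $t+2$ new rows using that block, then run the inherited procedure --- is the right shape. But your specific construction, appending the block at the \emph{end} and introducing the new \emph{top} value $r$, has a genuine gap at exactly the point you flag as ``the technical core,'' and I do not believe it can be closed. The problem is that the first and last columns of $T'(\vec{\delta})$ are not symmetric: in column $1$ one has $a^1_j=j$ by definition, whereas in column $g'+1$ one has $a^{g'+1}_j=j+g'-n_j$, where $n_j$ is the number of occurrences of $j$ in $\vec{\delta}'$ --- a quantity handed to you by the inductive hypothesis, not chosen by you. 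For your plan you need a threshold $c_{g'+1}$ with $2a^{g'+1}_{r-1}<c_{g'+1}\le a^{g'+1}_{j_1}+a^{g'+1}_r$ for each distinguished new row $(j_1,r)$ (erase all old rows, keep the new ones). For $j_1=0$ this requires $a^{g'+1}_0+a^{g'+1}_r>2a^{g'+1}_{r-1}$, i.e.\ $2n_{r-1}>r-2+n_0$, which fails whenever $n_{r-1}$ is close to its minimum $s$ and $s<r-1$. Since $t+2$ can equal $r+1$ (whenever $t=r-1$, the generic case), \emph{all} $r+1$ new rows including $(0,r)$ must be among the distinguished rows, so you cannot discard the problematic ones by Rule (i). Worse, on subsequent columns with $\delta_i=r$ the quantity $a^i_{(j_1,r)}$ grows by $1$ per column while $a^i_{(j_1',j_2')}$ (with $j_2'\le r-1$) grows by $2$, so a new row that falls below the old rows at column $g'+1$ never recovers; it then surfaces only in the \emph{old} columns, interleaved with the old rows, where it can block Rule (ii)/(iii) steps of the inherited procedure.

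The paper's proof avoids all of this by prepending the block and introducing the new value at the \emph{bottom}: it adds $1$ to every entry of $\vec{\delta}'$, inserts $t$ zeroes at the front, and takes $c_2=3$, $c_i=c_{i-1}+2$ for $i\le t+1$, $c_i=c'_{i-t}+2t+2$ thereafter. Because $\delta_i=0$ on the first $t$ columns, $a^i_0$ stays frozen at $0$ and $b^i_0=d$ is maximal, so the new rows $(0,j_2)$ are separated from every row $(j_1,j_2)$ with $j_1\ge 1$ by an explicit $b$-value computation ($b^i_{(1,1)}=2d-2i-2<2d-c_{i+1}$), exactly three new rows appear in column $1$ and at most one new row appears in each of columns $2,\dots,t$, so Rules (ii) and (iv) dispose of them; the remaining $g'$ columns are then literally $T_{w'}(\vec{\delta}')$ for $\vec{\delta}'$ viewed as a $(t+1)$-shifted $(g',r',d)$-sequence, via Proposition \ref{prop:invariance}. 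If you want to salvage your write-up, the cleanest fix is to switch to this prepended construction rather than trying to engineer the appended one.
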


Thus, the reduction of the lemma can be applied to give lower bounds on rank  in all cases, but the resulting bound may not be sharp unless we are starting in a surjective case
 which is either non-injective, or where $\rho>0$. See Example \ref{ex:reductions} for further discussion.

We will also use Proposition \ref{prop:injective-extend} to reduce to the following sequence of `critical' injective cases, of which the first was examined in Example \ref{ex:m2-critical} above.

\begin{prop}\label{prop:m2-critical} Theorem \ref{thm:m2} holds when $r$ is even and $g=(r+1)r/2$, $d=(r+2)r/2$, and when $r$ is odd and $g=(r+1)^2/2$, $d=r(r+3)/2$.
\end{prop}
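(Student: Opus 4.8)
The plan is to take $\vec{\delta}$ to be the ``staircase'' sequence in which each of $0,1,\dots,r$ occurs exactly $p:=r+g-d$ times (so $g=(r+1)p$, with $p=r/2$ for $r$ even and $p=(r+1)/2$ for $r$ odd); this is a valid $(g,r,d)$-sequence with no integer occurring more than $r$ times. One checks that $2d+1-g=\binom{r+2}{2}$ when $r$ is even and $2d+1-g>\binom{r+2}{2}$ when $r$ is odd, so in both cases it is enough to exhibit an unimaginative $w$ for which $T_w(\vec{\delta})$ is $\binom{r+2}{2}$-expungeable, i.e.\ for which \emph{every} row can be dropped. Since unimaginativeness gives steadiness for free by Proposition \ref{prop:unimaginative}, this also yields the geometric conclusion in Theorem \ref{thm:m2}; the first even case $r=4$ is Example \ref{ex:m2-critical}.

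For $w$ I would use an ``approximately diagonal'' vector: $c_i$ increases by $m=2$ within each block of constant $\vec{\delta}$ and by slightly more across block boundaries, chosen symmetrically ($c_{g+2-i}=2d-c_i$) and calibrated so that, for the row of $T(\vec{\delta})$ indexed by $(j_1,j_2)$, the non-erased columns in $T_w(\vec{\delta})$ form a short interval lying to the left of the central block(s) when $j_1+j_2<r$, to the right when $j_1+j_2>r$, and straddling the central block(s) when $j_1+j_2=r$. The underlying elementary fact is that along any fixed column $a^i_{(j_1,j_2)}$ is weakly increasing in $j_1+j_2$, strictly so across the anti-diagonal $j_1+j_2=r$; hence (using unimaginativeness) $a^i_{(j_1,j_2)}-c_i$ is non-increasing in $i$, so each non-erased window is a single contiguous interval whose position is governed by where $a^i_{(j_1,j_2)}$ crosses $c_i$, the boundary conventions at $i=1,g$ being handled the same way.

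The expungement then runs in three phases. First, the rows with $j_1+j_2<r$ are removed by repeated use of Rule (ii), sweeping columns from left to right: because all rows of smaller $j_1+j_2$ have already been dropped and all rows of larger $j_1+j_2$ have strictly larger $a$-value in the current column, the appropriate row is at each stage strictly $a$-minimal where it needs to be. Symmetrically, using $i\mapsto g+2-i$, the rows with $j_1+j_2>r$ are removed by Rule (iii) sweeping right to left. This leaves the anti-diagonal rows $(k,r-k)$. For $r$ even these are $(0,r),\dots,(r/2,r/2)$; a direct computation at the first column $i_0$ of the central block $B_{r/2}$ shows that all of them take the common value $r/2+2(i_0-1)$ there while $(r/2,r/2)$ takes the strictly larger value $r+2(i_0-1)$, and each lies in the window over all of $B_{r/2}$. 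Since $B_{r/2}$ has length exactly $r/2$ and $k<r/2<r-k$ for $k<r/2$, Rule (vii) with $n=r/2$ applies and clears all of them at once, with $(r/2,r/2)$ playing the role of the $(\delta_{i_0},\delta_{i_0})$ row.

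For $r$ odd the anti-diagonal rows are $(0,r),\dots,((r-1)/2,(r+1)/2)$; the same computation shows the $(r-1)/2$ ``off-diagonal'' rows $(k,r-k)$ with $k\le(r-3)/2$ share a common $a$-value throughout the block $B_{(r-1)/2}$ (a block of length $(r+1)/2=(r-1)/2+1$), with the diagonal row $((r-1)/2,(r-1)/2)$ strictly larger, so Rule (vii) with $n=(r-1)/2$ clears these together with $((r-1)/2,(r-1)/2)$ — which I deliberately retain through the first phase; when $r=3$ this degenerates to a single off-diagonal row and one instead isolates the pair $\{(0,3),(1,1)\}$ in a single column and drops it by Rule (iv). The genuinely exceptional row is the middle row $((r-1)/2,(r+1)/2)$, which can be neither a $\vec{j}^e$ in Rule (vii) (there is no integer strictly between $(r-1)/2$ and $(r+1)/2$) nor paired off in the obvious way by Rule (vi); I arrange its window to straddle $B_{(r-1)/2}$ and $B_{(r+1)/2}$, so that once all other rows are gone it is the unique row non-erased in the first column of $B_{(r+1)/2}$ and can be dropped by Rule (iv). I expect the main difficulty to be exactly this calibration: verifying that one unimaginative symmetric $w$ can be made to place every window where the three-phase argument requires — in particular isolating the middle row in the odd case — and checking the strict-minimality and position hypotheses of Rules (ii), (iii), (iv) and (vii) at each step; each such check reduces to explicit inequalities among the $a^i_{(j_1,j_2)}$ and $c_i$ that are affine in $i$ on each block.
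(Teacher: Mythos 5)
Your setup matches the paper's (the staircase $\vec{\delta}$ with each value occurring $\ell=r+g-d$ times, full $\binom{r+2}{2}$-expungeability as the target, unimaginativeness for steadiness, a final Rule (vii) application to the anti-diagonal rows in the central block), but the core of your argument has a genuine gap. The ``underlying elementary fact'' you invoke --- that in a fixed column $a^i_{(j_1,j_2)}$ is weakly increasing in $j_1+j_2$ --- is false. From the formula $a^i_{j}$ for the staircase sequence, a row of sum $s$ takes the value $2i+s-2$, $2i+s-\ell-2$, or $2i+s-2\ell-2$ in a column of block $\alpha$ according to whether $\alpha$ lies below, between, or above $j_1,j_2$ (with an interpolating value in blocks $\alpha\in\{j_1,j_2\}$); the correction of size $\ell$ or $2\ell$ easily outweighs a difference in sums. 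Concretely, in Example \ref{ex:m2-critical} (the $r=4$ member of this very family), in the fifth column one has $a^5_{(1,4)}=11<12=a^5_{(2,2)}$ although $(1,4)$ has the larger sum; and rows of equal sum generally take different values, so ``increasing in $j_1+j_2$'' is not even well defined. Consequently your justification of Phase 1 (``all rows of larger $j_1+j_2$ have strictly larger $a$-value in the current column'') does not hold, and the claim that Rules (ii)/(iii) alone clear everything off the anti-diagonal is exactly the point that needs proof. Worse, distinct rows of the same sum $s$ have \emph{identical} $(a,b)$ entries in every column of every block strictly between their inner indices (e.g.\ $(0,4)$ and $(1,3)$ in Example \ref{ex:m2-critical} agree in their entire non-erased window), so they cannot be separated by Rule (ii) or (iii) at all unless your $w$ places their windows near their own index blocks --- simultaneously for every same-sum pair, with a single unimaginative symmetric $w$, while preserving strict minimality against rows of other sums. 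You acknowledge this ``calibration'' as the main difficulty and do not carry it out; but it is the entire content of the proposition, and your only structural tool for it is the false monotonicity statement.

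For comparison, the paper's proof does not attempt your three-phase scheme: with its explicit $w$ (increments $2$ with one increment $3$ per block, arranged symmetrically), the cluster of same-sum rows $(0,i+\ell),(1,i+\ell-1),\dots,(i-1,\ell+1)$ appears with identical entries throughout block $i$, and the paper therefore applies Rule (vii) with $n=i$ inside \emph{every} block $2\le i<$ the middle (and symmetrically from the right), reserving Rule (iv) for the remaining columns; only the final application of Rule (vii), with $n=r/2$ (even case), coincides with your Phase 3. In the odd case the paper has an even number of blocks and the two sweeps cover everything, so no special ``middle row'' device is needed. If you want to rescue your plan you would either have to exhibit a $w$ for which every same-sum collision below the anti-diagonal is avoided (and verify the strict-minimality hypotheses column by column), or accept, as the paper does, that Rule (vii) must be used throughout and not only at the center.
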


Finally, the following computation is very straightforward, but is used in the proofs of both Theorem \ref{thm:m2} and Lemma \ref{lem:basic-reduction}.

\begin{prop}\label{prop:m2-difference} Given $(g,r,d)$, we have
$$\binom{r+2}{2}-(2d+1-g)=\binom{r}{2}-\rho-(g-d+r)(r-1).$$
\end{prop}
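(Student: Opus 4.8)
The statement is a purely algebraic identity, so the plan is simply to verify it by substituting the definition $\rho = g-(r+1)(r+g-d)$ and simplifying. The cleanest bookkeeping is to move $\binom{r}{2}$ to the other side first: the claimed identity is equivalent to
$$\binom{r+2}{2}-\binom{r}{2} = (2d+1-g)-\rho-(g+r-d)(r-1).$$
First I would evaluate the left-hand side of this reformulated identity directly, getting $\binom{r+2}{2}-\binom{r}{2}=\tfrac{(r+2)(r+1)-r(r-1)}{2}=2r+1$.

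Then I would substitute $\rho=g-(r+1)(r+g-d)$ into the right-hand side and collect the two terms that are multiples of $(r+g-d)$. This yields
$$(2d+1-g)-\bigl(g-(r+1)(r+g-d)\bigr)-(r-1)(r+g-d) = 2d+1-2g+\bigl[(r+1)-(r-1)\bigr](r+g-d).$$
Since $(r+1)-(r-1)=2$ and $2(r+g-d)=2r+2g-2d$, the right-hand side collapses to $2d+1-2g+2r+2g-2d=2r+1$, which matches the left-hand side, proving the identity.

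There is no genuine obstacle here: the whole argument is a two-line computation once the terms are arranged conveniently. The only points requiring a moment of care are the sign in front of the $\rho$ term and the cancellation of the $(r+g-d)$ contributions, which is exactly why combining those two terms before expanding makes the calculation painless.
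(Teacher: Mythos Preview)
Your proof is correct. The paper treats this proposition as a straightforward computation and does not write out a proof at all; your direct verification by substituting $\rho=g-(r+1)(r+g-d)$ and comparing both sides is exactly what is intended.
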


We can now complete the proof of the $m=2$ case of the Maximal Rank  Conjecture.

\begin{proof}[Proof of Theorem \ref{thm:m2}] We work by induction on $r$, with the induction hypothesis being that Theorem \ref{thm:m2} holds with
the added stipulation that for any surjective case, we can arrange for the $(g,r,d)$-sequence $\vec{\delta}$ to have at most $r$ repetitions
of every integer, and for $w$ to be unimaginative, with $c_2 \geq 2$.
We begin by proving the desired statement in the base case $r=3$.
The conditions $\rho \geq 0$ and $g-d+r>0$ imply that we must have $g \geq r+1=4$.

 Start with the surjective cases, $\binom{r+2}{2} \geq 2d+1-g$. 
 By Proposition \ref{prop:m2-difference} and the assumption $r=3$, this is equivalent to having $3-\rho-(g-d+r)\cdot 2 \geq 0$, implying that we must have $g-d+r=1$ and  $\rho \leq 1$. 
 Thus, the only two cases are the canonical case $g=4,d=6$, or the case $g=5, d=7$, which are addressed (satisfying our extra stipulations on the $(g,r,d)$-sequences 
 and $w$) in Examples \ref{ex:m2-r3-g4} and \ref{ex:m2-r3-g5}. 
 Now, the two previous cases are the only ones with $g \leq 5$, but we observe that Example \ref{ex:m2-r3-g5} was injective, with $\vec{\delta}$ 
extendable, so the $r=3$ case follows by Proposition  \ref{prop:injective-extend}.

Next, if we assume our hypothesis holds for $r-1$, Lemma  \ref{lem:basic-reduction} together with the induction hypothesis
then gives us all surjective cases except for those which are also injective and have $\rho=0$.
Now, suppose that we are in the injective case $\binom{r+2}{2} \leq 2d+1-g$, and set $s=\min(2d+1-g-\binom{r+2}{2}, \rho)$. 
Then if we set $g'=g-s, r'=r, d'=d-s$, we see that $g'-d'+r'=g-d+r$, and $\rho'=\rho-s \geq 0$,
so we have another valid case with the same $r$. 
In addition, 
$$2d'+1-g'-\binom{r'+2}{2}=2d+1-g-\binom{r+2}{2}-s=\max\left(0,
2d+1-g-\binom{r+2}{2}-\rho\right),$$ 
so $(g',r',d')$ remains in the injective case, but either has $\rho'=0$, or is simultaneously in the surjective case. 
In either case, Proposition \ref{prop:injective-extend} implies that in order to treat $(g,r,d)$, it is enough to treat $(g',r',d')$.
Combined with our previous reductions in the surjective case, we see that it is enough to treat injective cases with $\rho=0$.
 We claim that all such cases have  $g \geq (r+1) \lceil \frac{r}{2} \rceil$.
  Indeed, $\rho=0$ means that $g=(r+1)(g-d+r)$, so it then suffices to see that injectivity (together with $\rho=0$) implies that
$g -d+r \geq r/2$, which is immediate from Proposition \ref{prop:m2-difference}.
Noting that any $(g,r,d)$-sequence with $\rho=0$ is extendable, the theorem then follows from Propositions \ref{prop:m2-critical} and \ref{prop:injective-extend}.
\end{proof}

We now give the proofs of the two intermediate results, starting with the basic reduction for the surjective case.

\begin{proof}[Proof of Lemma \ref{lem:basic-reduction}]
From the definition of $g',d',r'$, it follows that  $g'-d'+r'=g-d+r$. Then,  
$$\rho'= \rho-(t-(g-d+r))=\max (0, \rho+g-d+1) \geq 0.$$
We construct $\vec{\delta}$ by adding $1$ to each entry of $\vec{\delta}'$, and inserting $t$ zeroes at the beginning of the sequence.
 In terms of Young tableaux, it is adding a height $t$ column to the left of the Tableau$'$.
 Since $t \leq r-1$, $\vec{\delta}$ will have no number appearing more than $r$ times.
 Moreover, $\vec{\delta}$ is a $(g,r,d)$-sequence: since  $g-d+r=g'-d'+r'$, it suffices to check that the added column in the Young Tableau is at least as long as the others, or equivalently
  that no number in $\vec{\delta}'$ appears more than $t$ times. 
 If $t=r-1$, this is by hypothesis. If $t=\rho+(g-d+r)$, then $\rho'=0$  and the Tableau$'$ was a rectangle with all columns  of the same height $g'-d'+r'\le t$.

If $c'=(c'_2,\dots,c'_{g'})$, we construct $c=(c_2,\dots,c_g)$ by setting 
\[ c_2=3, c_i=c_{i-1}+2, i \leq t+1;\ \ c_i=c'_{i-t}+2t+2, i \geq t+2.\] 
Then if $w'$ is unimaginative with $c'_2\geq 2$, the same will be true of $w$. 
 By construction we will have that in $T_w(\vec{\delta})$, only rows of the form $(0,j_2)$ can appear in the first $t$ columns: 
 indeed, we have $ 2d-c_{i+1}=2d-2i-1$ while $b^i_{(1,1)}=2d-2i-2$ for $i \leq t$, so the $(1,1)$ row cannot appear, and $b^i_{(j_1,j_2)} \leq b^i_{(1,1)}$ when $j_1 \geq 1$.
Now, suppose there exists a choice of $N$ rows of $T_{w'}(\vec{\delta}')$ which can be used to verify $N$-expungeability of $T_{w'}(\vec{\delta}')$.
Our claim is that using these rows (appropriately reindexed by $1$ corresponding to the shift in $\vec{\delta}$) together with the rows
$(0,0),\dots,(0,t+1)$, we can verify $(N+t+2)$-expungeability of $T_w(\vec{\delta})$.
By construction we will have precisely the rows $(0,0)$, $(0,1)$, $(0,2)$ appearing in the first column, with  entries $a^1_{\vec{j}}$ equal to $0,1,2$ respectively, 
so repeatedly applying Lemma \ref{rulesii-v}(a), we can drop these three rows. 
Next, in the following $t-1$ columns, we can have at most one new row appearing in each column, so applying Lemma \ref{rulesii-v} (c) in each case,
 we can drop each of these rows, which are rows $(0,3),\dots,(0,t+1)$.
  The remaining rows are those of the form $(j_1,j_2)$ with $j_1>0$, which appear only in the final $g'$ columns. 
  These $g'$ columns of $T_w(\vec{\delta})$ agree precisely with the $T_{w'}(\vec{\delta}')$ one obtains from considering $\vec{\delta}$ as a $(t+1)$-shifted  $(g',r',d)$-sequence,
and the latter is $N$-expungeable by Proposition \ref{prop:invariance}.
We thus conclude the first statement of the lemma, and the particular case of $(2d'+1-g')$-expungeability follows immediately.

Finally, we verify by direct calculation that 
$$\binom{r'+2}{2}-(2d'+1-g')=\binom{r+2}{2}-(2d+1-g)-(r-1-t)$$
For the last statement in the Lemma, it suffices to prove that if $t<r-1$ and either $\binom{r+2}{2}>2d+1-g$ or $\binom{r+2}{2} =2d+1-g$ and $\rho>0$,  then $\binom{r+2}{2}-(2d+1-g)\geq r-1-t$. 
Now, if $t<r-1$ then $r-1-t=r-1-\rho-(r+g-d)$. 
Writing $\ell=r+g-d$, Proposition \ref{prop:m2-difference} implies first that our desired inequality can be written $\binom{r}{2}-\rho-\ell(r-1) \geq r-1-\rho-\ell$, 
and second, that under either of our hypotheses, we have $\binom{r}{2}>\ell(r-1)$.
The desired inequality simplifies to $(r-1)(r-2)/2 \geq \ell(r-2)$, or equivalently, $\ell \leq (r-1)/2$, while the given inequality yields $\ell < r/2$ and hence $\ell \leq (r-1)/2$, as desired.
\end{proof}

Finally, we treat our sequence of critical cases. Recall that an example of the $r=4$ case is given in Example \ref{ex:m2-critical}.

\begin{proof}[Proof of Proposition \ref{prop:m2-critical}]
We are assuming  that if  $r$ is even,  $g=(r+1)r/2$, $d=(r+2)r/2$, and if $r$ is odd then $g=(r+1)^2/2$, $d=r(r+3)/2$.

Write $\ell=g-d+r$, so that $\ell=\frac{r}{2}$ if $r$ is even, and $\ell=\frac{r+1}{2}$ if $r$ is odd. 
Choose $\vec{\delta}=\underbrace{0,\dots,0}_{\ell \text{ times}}, \underbrace{1,\dots,1}_{\ell \text{ times}}, \dots , \underbrace{r,\dots,r}_{\ell \text{ times}}$,
 or equivalently, the Young Tableau is a rectangle filled successively by column.
Choose $c=(c_2,\dots,c_g)$, where $c_2=2$, and 
\[ \text{for }2<i \leq g/2+1, \ \  c_i=c_{i-1}+\begin{cases} 2: & i \not\equiv 2 \pmod{\ell} \\  3 : & i \equiv 2 \pmod{\ell}.\end{cases}\]
\[ \text{for } g/2+1 < i \leq g, \ \  c_i=c_{i-1}+\begin{cases} 2: & i \not\equiv 1 \pmod{\ell} \\ 3 : & i \equiv 1 \pmod{\ell}.\end{cases}\]
The result is that we have $r+1$ blocks consisting of $\ell$ columns each, which can be analyzed essentially independently of one another. 
In addition, the situation is symmetric about the middle. 
Because our $w$ is unimaginative, in order to analyze the erasures in $T_w(\vec{\delta})$, we can simply look at how a given $(a^i_{\vec{j}}, b^i_{\vec{j}})$ compares to $(c_i, 2d-c_{i+1})$; 
see Remark \ref{rem:zeroing-pattern}.
Specifically, if $\vec{j}=(j_1,j_2)$, the columns are erased up until the  first time that  $b^i_{\vec{j}} \geq 2d-c_{i+1}$ (equivalently, $a^{i+1}_{\vec{j}} \leq c_{i+1}$), 
and will be erased after the last time  that $a^i_{\vec{j}} \geq c_i$. 
In particular, the $(j_1,j_2)$ row appears for the first time in the $i$th column if and only if $a^i_{\vec{j}}>c_i$ and $a^{i+1}_{\vec{j}} \leq c_{i+1}$.

Labeling our blocks $0,\dots,r$, we have the following formulas: if we write $i=\ell \cdot \alpha+\beta$ with  $0<\beta \leq \ell$, so that the $i$th column of $T(\vec{\delta})$ 
is the $\beta$th column of the $\alpha$th block, then provided that  $i \leq \frac{g}{2}$, we have 
$$c_i=2i-2+\alpha-\delta_{\beta,1}, \text{ and } a^i_j=\begin{cases} i+j-1: & \alpha<j \\ i+j-\beta: & \alpha=j \\ i+j-\ell-1: & \alpha>j, \end{cases}$$
where $\delta_{\beta,1}$ is the Kronecker $\delta$ function.
We then analyze which rows appear for the first time (reading left to right) in each column.

In the first column of the $k$th block, with $0 \leq k < \ell$, we will have the first appearances of the rows of the form $(j,\ell+k-j)$, for $j=0,\dots,k-1$. 
For the $i'$th column of the $k$th block, with $1<i'\leq k$, the only new row is the $(k,k)$ row, which occurs for the first time in the $\lceil k/2 \rceil$th column
of the $k$th block (if $k\leq 2$, the $(k,k)$ row occurs in the first column of the $k$th block). 
For $k<i' \leq \ell$, the row $(k,i')$ will appear for the first time in the $i'$th column of the $k$th block (note that this includes the $(0,1)$ row occurring in the $1$st
column of the $0$th block; for $k>0$, we will have $i'>1$).

Now, if $r$ even, the procedure we use to show that $T_w(\vec{\delta})$ is  $\binom{r+2}{2}$-expungeable is as follows: for $k<r/2$, we show that
if all rows appearing in previous blocks have already been dropped, then we can work from left to right in the $k$th block to drop all rows  appearing in that block. 
For $k>r/2$ we apply the same procedure from right to left, and finally in the central $r/2$ block, we have dropped all rows appearing in any other block, and we show that the rows
only appearing in the $r/2$ can be dropped as well.

The desired dropping behavior is clear in the 0th block, since according to the above description, we see that when we work from left to right, there are never more than
two new rows appearing in a given column, so repeated use of Lemma \ref{rulesii-v} (c) suffices to drop all rows. 
The same argument works for the 1st block. In  the $k$th block for $1<k<r/2$, we have at most $k+1$ new rows appearing in the first column: 
$(0,k+r/2), (1,k+r/2-1),\dots,(k-1,r/2+1)$ always appear, as well as $(k,k)$ when $k = 2$. 
However, in the next $k-1$ columns we have no new rows appearing other than $(k,k)$ in the $\lceil k/2 \rceil$th column, and in each subsequent column we have only one new row appearing. 
We claim that we can use Lemma \ref{rulesvii} with $n=k$ to drop the $k+1$ rows appearing in the first $k$ columns; 
this will then imply that the rest of the rows in the block can be dropped just using Lemma \ref{rulesii-v}(d), as in the 0th block.
 Now, within the $k$th block, the rows $(0,k+r/2), (1,k+r/2-1),\dots,(k-1,r/2+1)$ are all identical, starting at $(2k(r/2)+k, 2d-2i(r/2)-k-2)$, with the left side increasing by $2$
and the right side decreasing by $2$ in each subsequent column. 
Note that this precisely matches the behavior of $w$, so in fact these rows all appear throughout the $k$th block. In contrast, the $(k,k)$th row is a constant
$(2k(r/2 + 1), 2d-2k(r/2 + 1))$, and appears in the $\lceil k/2 \rceil$th column only if $k$ is odd, and in the $\lceil k/2 \rceil$th and $(\lceil k/2 \rceil+1)$st columns if $k$ is even. 
Because no other rows appear in these columns, we can apply Lemma \ref{rulesvii}, as claimed.

By symmetry, we can also work from right to left to drop all rows except those which occur solely in the $r/2$ block. 
But these rows are precisely the rows $(0,r),(1,r-1),\dots,(r/2,r/2)$, and we can again apply Lemma  \ref{rulesvii}, this time with $n=r/2$, to drop all the remaining rows. 
This handles the case that $r$ is even.

Next, if $r$ is odd, the situation is almost the same, except that the number of blocks is even.
Accordingly, we can drop all rows by first going from left to right in the first $(r+1)/2$ blocks, and then going right to left in the remaining  $(r+1)/2$ blocks.
We again have that the $0$th and first blocks each have at most two new  rows in each column, so we can eliminate all the rows simply using Lemma  \ref{rulesii-v}((c).
We also still have that the $k$th block for $k \leq (r+1)/2$ will have $k+1$ rows occurring in the first $i$ columns, and then one additional row in each subsequent column, 
so just as before, we can apply  Lemma  \ref{rulesvii}  to treat the first $k$ columns of the block simultaneously, and then Lemma  \ref{rulesii-v}((c)  to deal with the remaining columns. 
As before, the situation is  symmetric, so applying the same procedure from right to left on the remaining $(r+1)/2$ blocks will allow us to drop all rows, as desired.
\end{proof}

\begin{ex}\label{ex:reductions} We consider some examples of the reduction processes from the proof of Theorem \ref{thm:m2}.

First, if we have the canonical case, with $g=r+1$ and $d=2r$, then applying  Lemma \ref{lem:basic-reduction} we have $\rho=0$ and $g+r-d=1$, so $t=1$,  
and we get $r'=r-1$, $g'=g-1=r'+1$, $d'=d-2=2r'$. 
Thus, we reduce to the  canonical case in genus one less.

Next, suppose we have an injective case with $r$ even and $g$ strictly smaller than the critical case $\frac{r(r+1)}{2}$.
 Then our reduction process will lead to an injective (and surjective) case with  $r'=r-1$, and $g'$ strictly smaller than the critical case $\frac{(r'+1)^2}{2}$.
However, the next step in the reduction will not necessarily stay below the critical case. 
For instance, consider the case $r=6$, $g=20$, $d=24$. 
This is injective, with $\rho=6$ and $g+r-d=2$, and $2d+1-g-\binom{r+2}{2}=1$. 
In this case, the $s$ from the proof of Theorem \ref{thm:m2} is equal to $1$, so we first use Proposition \ref{prop:injective-extend} to reduce to considering the
case $r'=r=6, g'=g-1=19, d'=d-1=23$. 
This case is now injective and surjective, with $g'+r'-d'=2$ and $\rho'=5$, so when we apply Lemma  \ref{lem:basic-reduction}, we have $t=r'-1=5$, 
and reduce to the case $r''=r'-1=5$, $g''=g'-5=14$, $d''=d'-6=17$, which is still an injective and surjective case, and has $g''=14<\frac{(r''+1)^2}{2}=18$. 
The next step is another reduction via Lemma \ref{lem:basic-reduction}, where now we have $t=r''-1=4$, so the next reduction ends up at the critical case $r'''=4$,
$g'''=10$, $d'''=12$, which is addressed directly in Proposition  \ref{prop:m2-critical} (and in Example \ref{ex:m2-critical}).

Finally, consider what happens for the critical case $r=4$, $g=10$, $d=12$ if instead of handling the case directly as in our proof of Theorem 
\ref{thm:m2}, we instead attempt to apply Lemma \ref{lem:basic-reduction}. 
This case has $g+r-d=2$ and $\rho=0$, so we will have $t=2$, so we will `reduce' to the case $r'=3$, $d'=9$, $g'=8$. 
However, this latter case is non-surjective: $2d'+1-g'=11$, while $\binom{r'+2}{2}=10$. 
Thus, the best we can do in this case is to show that we have rank $10$ for $(g',r',d')=(8,9,3)$. 
Then Lemma \ref{lem:basic-reduction} says that we have rank at least $10+t+2=14$ for $(g,r,d)=(10,4,12)$, but the conjecture is that this case should have rank $15$. 
Thus, in this case Lemma \ref{lem:basic-reduction} does provide partial information, but falls short of the sharp result.
\end{ex}

\section{Observations on surjectivity}\label{sec:surjective}

We now consider the surjective range, where $\binom{r+m}{m} \geq md+1-g$.
We prove surjectivity in a range of cases for $m=3$ in Corollary \ref{cor:surj} below, but while these cases are somewhat different
from those considered by Jensen and Payne in \cite{j-p3}, they are fully covered by Ballico \cite{ba4}. 
For us, the purpose of this section is to illustrate a rather distinct type of argument from that found in other sections, and simultaneously to explain how the number $md+1-g$,
which arises naturally from the Riemann-Roch theorem on smooth curves, can be seen also in the context of limit linear series and our elementary criterion.
We start our discussion with the limit  linear series point of view, but this will not be used elsewhere: the  criteria which we will actually apply are stated in Proposition 
\ref{prop:surj-crit} below, and proved directly from our elementary criterion. 

Suppose we have $w=(c_2,\dots,c_g)$ inducing multidegree $(d_1,\dots,d_g)$, with $\sum_i d_i= md$. 
Then we can study $\Gamma(X_0,\sL_{w})$ via the Riemann-Roch theorem for reducible curves, 
but for our purposes, it is more instructive to carry out a direct analysis.
Considering restriction to components and nodes gives us an exact sequence
\begin{equation}\label{eq:gluing} 0 \to \Gamma(X_0,\sL_{w}) \to  \bigoplus_{i=1}^g \Gamma(Z_i,\sL_{w}|_{Z_i}) \to \bigoplus_{i=1}^{g-1} k, \end{equation}
and assuming all the $d_i$ are positive, we have  $\dim \Gamma(Z_i,\sL_{w}|_{Z_i})=d_i$ for $i=1,\dots,g$.
 We thus see that $\dim \Gamma(X_0,\sL_{w}) \geq md+1-g$, with equality if and only if the last map of \eqref{eq:gluing} is surjective. We then have

\begin{prop}\label{prop:gluing-surject} In the above situation, suppose that  $md > 2g-2$, and we have $d_1 \geq 1$, $d_i \geq 2$ for $1<i<g$, and $d_g \geq 1$. 
Then  \eqref{eq:gluing} is surjective, so $\dim \Gamma(X_0,\sL_{w}) = md+1-g$.
\end{prop}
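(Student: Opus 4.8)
The plan is to reduce the assertion to the single cohomological vanishing $H^1(X_0,\sL_{\md(w)})=0$. Indeed, by Riemann--Roch on $X_0$ we have $\chi(X_0,\sL_{\md(w)})=md-g+1$, so $\dim\Gamma(X_0,\sL_{\md(w)})=md+1-g$ is equivalent to $H^1(X_0,\sL_{\md(w)})=0$; and by the discussion preceding the proposition (where the bound $\dim\Gamma\ge md+1-g$ and the fact that equality is equivalent to surjectivity of the last map in \eqref{eq:gluing} were established using $d_i>0$ for all $i$), this is exactly what we want.

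To see the vanishing I would use Serre duality for the nodal (hence Gorenstein) curve $X_0$: $H^1(X_0,\sL_{\md(w)})\cong H^0(X_0,\sM)^\vee$, where $\sM:=\omega_{X_0}\otimes\sL_{\md(w)}^{-1}$. The dualizing sheaf of the chain restricts to $\omega_{Z_i}(D_i)$ on each component, with $D_i$ the sum of the nodes lying on $Z_i$; since every $Z_i$ has genus $1$ we have $\omega_{Z_i}\cong\sO_{Z_i}$, so $\omega_{X_0}$ has multidegree $(1,2,2,\dots,2,1)$. Hence $\sM$ has multidegree $(1-d_1,\,2-d_2,\,\dots,\,2-d_{g-1},\,1-d_g)$. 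The point is that the hypotheses are tailored exactly to this: $d_1\ge1$, $d_i\ge2$ for $1<i<g$, and $d_g\ge1$ say precisely that $\deg\sM|_{Z_i}\le0$ for every $i$, while $md>2g-2$ says precisely that $\deg\sM=(2g-2)-md<0$.

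The heart of the argument is then a short connectedness/propagation step proving $H^0(X_0,\sM)=0$. Suppose $s$ is a nonzero global section with restrictions $s_i:=s|_{Z_i}$. If $s_i\ne0$, then since $Z_i$ is smooth and $\deg\sM|_{Z_i}\le0$, the effective divisor $\dv(s_i)$ has nonpositive degree, hence is zero; so $s_i$ is nowhere vanishing, forcing $\deg\sM|_{Z_i}=0$ and $\sM|_{Z_i}\cong\sO_{Z_i}$, with $s_i$ a nonzero constant. At a node joining $Z_i$ to an adjacent component $Z_{i'}$, the two restrictions of $s$ agree, so $s_{i'}$ is nonzero at that node and hence $s_{i'}\ne0$. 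Iterating along the connected chain shows $s_i\ne0$ and $\deg\sM|_{Z_i}=0$ for all $i$, whence $\deg\sM=0$, contradicting $\deg\sM<0$. Therefore $H^0(X_0,\sM)=0$, so $H^1(X_0,\sL_{\md(w)})=0$ and the proposition follows.

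I do not expect a serious obstacle here: the degree bookkeeping is forced by the hypotheses, and the only genuinely substantive point is the propagation argument, which is routine for a connected nodal curve. If one prefers to avoid invoking Serre duality on a singular curve, an alternative is to identify $\coker\big(\bigoplus_i\Gamma(Z_i,\sL_{\md(w)}|_{Z_i})\to\bigoplus_{i=1}^{g-1}k\big)$ with $H^1(X_0,\sL_{\md(w)})$ via the normalization sequence, using $H^1(Z_i,\sL_{\md(w)}|_{Z_i})=0$ (valid since $d_i\ge1$), and then run the same argument on $\sM$; I would keep the Serre-duality phrasing since it is the shortest.
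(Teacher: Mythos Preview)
Your proof is correct. It takes a genuinely different route from the paper's own argument, so let me compare.

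The paper argues surjectivity of \eqref{eq:gluing} directly and elementarily: since $md>2g-2$ exceeds the sum $1+2(g-2)+1$ of the lower bounds, some $d_{i_0}$ is strictly larger than its bound. For that index (say $1<i_0<g$, so $d_{i_0}\ge 3$), evaluation $\Gamma(Z_{i_0},\sL|_{Z_{i_0}})\to k^{2}$ at both $P_{i_0},Q_{i_0}$ is surjective; for the remaining interior components, $d_i\ge 2$ makes evaluation at a single marked point surjective. One then assembles these componentwise surjections into surjectivity of the full difference map at the $g-1$ nodes. No cohomology or duality is invoked.

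Your approach instead recasts the statement as $H^1(X_0,\sL_{\md(w)})=0$, applies Serre duality for the Gorenstein curve $X_0$, and then runs a propagation argument on the dual bundle $\sM=\omega_{X_0}\otimes\sL_{\md(w)}^{-1}$, whose multidegree the hypotheses force to be componentwise nonpositive and globally negative. The propagation step (nonzero section $\Rightarrow$ nowhere vanishing on that component $\Rightarrow$ nonzero at the adjacent node $\Rightarrow$ spreads to all components) is standard and correct.

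What each buys: the paper's argument is more self-contained, needing only Riemann--Roch on smooth genus-$1$ curves, and fits the paper's generally hands-on style. Your argument is more conceptual and makes transparent why the specific thresholds $d_1,d_g\ge 1$ and $d_i\ge 2$ arise: they are exactly the multidegree of $\omega_{X_0}$. Your version also generalizes more readily (e.g., to other compact-type curves once one knows $\omega_{X_0}|_{Z_v}$), at the cost of invoking Serre duality on a nodal curve---though, as you note, this can be replaced by the normalization long exact sequence together with $H^1(Z_i,\sL|_{Z_i})=0$.
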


\begin{proof} Since $md>2g-2$, there is some $i_0$ for which the above inequality on $d_{i_0}$ becomes strict. If $1<i_0<g$, and $d_{i_0}>2$,
then the map $\Gamma(Z_{i_0},\sL_{w}|_{Z_{i_0}}) \to k^{\oplus 2}$  induced by restriction to $P_{i_0}$ and $Q_{i_0}$ is necessarily surjective.
For $1<i<i_0$, because $d_i \geq 2$ we have surjectivity of the map  $\Gamma(Z_i,\sL_{\md(w)}|_{Z_i}) \to k$ induced by restriction to $P_i$, and
similarly for $i<i_0<g$ we have surjectivity of the map  $\Gamma(Z_i,\sL_{w}|_{Z_i}) \to k$ induced by restriction to $Q_i$.
Putting these together gives surjectivity of \eqref{eq:gluing}. A similar analysis of the cases $i_0=1$ and $i_0=g$ yields the proposition.
\end{proof}

\begin{rem}\label{rem:deg-bd}
The hypothesis in Proposition \ref{prop:gluing-surject} that $md>2g-2$ is quite mild: if $m=3$, it is always satisfied,
while for $m=2$, we observe that if we are in the surjective range, so that $\binom{r+2}{2} \geq 2d+1-g$, then we necessarily have $d > g$.
Indeed, Proposition \ref{prop:m2-difference} may be rewritten equivalently as $\binom{r+2}{2}-(2d+1-g)=(d-g)(r-1)-\binom{r}{2}-\rho$, from which
$d > g$ follows immediately when the lefthand side is nonnegative.
\end{rem}

The above point of view gives  a way to choose the sections that one wants to be linearly independent
If reading from left to right, the first column (corresponding to $Z_1$) has full $d_1$-dimensional span, and each subsequent column has full $(d_i-1)$-dimensional span among the sections
not appearing in previous columns, then we obtain surjectivity choosing the sections that appear in each of these columns.

We now generalize  the above observation and derive some consequences. 
In the Proposition  below, the  case $i_0=1$ corresponds to the above situation. 

\begin{prop}\label{prop:surj-crit} 
Assume  that $\binom{r+m}{m} \geq md+1-g$, $w=(d_1,\dots,d_g)$, such that  $d_i>0$ for $i>1$ and for some $i_0\geq 1$, $\sum_{i=1}^{i_0} (d_i-1)\geq 0$.
 Assume that there is some choice of $md+1-g$ rows of $T_w(\vec{\delta})$ such that Lemma \ref{rulesii-v} can be used in component $Z_{i_0}$ 
 to prove the independence of sections correspondng to $1+\sum_{i=1}^{i_0} (d_i-1)$ rows,
  and then for each $i>i_0$, to prove on $Z_i$ the independence of the sections corresponding to $d_i-1$ aditional rows none of which occur in previous columns. 
  Then $T_w(\vec{\delta})$ is $(md+1-g)$-expungeable.

In particular, suppose that $w$ and $i_0$ are as above, and  $T_w(\vec{\delta})$ has the property that the non-erased portion of each row is contiguous. 
Then if every number between  $0$ and $md$ other than $1,\dots,i_0-1$ and $md-1$ occurs among the $a^i_{\vec{j}}$ of $T_w(\vec{\delta})$ for $i \geq i_0$, 
we have that $T_w(\vec{\delta})$ is $(md+1-g)$-expungeable. 

More generally, if $w$ and $i_0$ are as above, and $T_w(\vec{\delta})$ has the property that the non-erased portion of each row is contiguous, suppose further that:
\begin{itemize}
\item in the $i_0$th column, either $0,i_0,i_0+1,\dots,c_{i_0+1}-1$ all occur among the $a^{i_0}_{\vec{j}}$, or $0,i_0,i_0+1,\dots,c_{i_0+1}-2$ all occur, with $c_{i_0+1}-2$ occurring at least twice;
\item for each $i>i_0$, in the $i$th column either $c_i+1,\dots,c_{i+1}-1$ all occur among the $a^{i}_{\vec{j}}$, or $c_i+1,\dots,c_{i+1}-2$ all occur, with $c_{i+1}-2$ occurring at least twice.
\end{itemize}
Then $T_w(\vec{\delta})$ is $(md+1-g)$-expungeable. 
\end{prop}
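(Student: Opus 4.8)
The plan is to deduce the final statement from the first statement of Proposition~\ref{prop:surj-crit} by exhibiting an explicit choice of $md+1-g$ rows of $T_w(\vec{\delta})$ and checking that the prescribed column-by-column dropping succeeds using only Rules (ii) and (iv). The starting point is a description of which entries survive in $T_w(\vec{\delta})$: by Remark~\ref{rem:zeroing-pattern}, applied with $w'=(a^2_{\vec{j}},\dots,a^g_{\vec{j}})$, the $i$th entry of the $\vec{j}$ row is erased whenever $a^i_{\vec{j}}<c_i$ or $a^{i+1}_{\vec{j}}>c_{i+1}$; together with the contiguity hypothesis this forces the columns in which the $\vec{j}$ row survives to form an interval on which $c_i\le a^i_{\vec{j}}\le c_{i+1}$. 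Since $a^i_{\vec{j}}+b^i_{\vec{j}}\le md$ gives $a^{i+1}_{\vec{j}}=md-b^i_{\vec{j}}\ge a^i_{\vec{j}}$, the sequence $a^i_{\vec{j}}$ is non-decreasing in $i$; hence a surviving row with $a^i_{\vec{j}}>c_i$ is erased in column $i-1$ and, by contiguity, in every column of smaller index. I will call such a row \emph{born at column $i$}. Using $\sum_{j=1}^{i_0}d_j=c_{i_0+1}$, one has $1+\sum_{j=1}^{i_0}(d_j-1)=1+c_{i_0+1}-i_0$, and these numbers together with $\sum_{i>i_0}(d_i-1)$ sum to $md+1-g$, so it suffices to pick $1+c_{i_0+1}-i_0$ rows ``for column $i_0$'' and $d_i-1$ rows ``for column $i$'' for each $i>i_0$.

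The choice is dictated by the two bullets. For column $i_0$: in the first subcase, the numbers $0,i_0,i_0+1,\dots,c_{i_0+1}-1$ each occur as $a^{i_0}_{\vec{j}}$ of some surviving row, and these are $1+c_{i_0+1}-i_0$ distinct values, so pick one surviving row realizing each; in the second subcase pick one surviving row for each of $0,i_0,\dots,c_{i_0+1}-3$ together with two distinct surviving rows realizing the value $c_{i_0+1}-2$, again $1+c_{i_0+1}-i_0$ rows in total. For each $i>i_0$: in the first subcase pick one surviving row realizing each of $c_i+1,\dots,c_{i+1}-1$, which is $d_i-1$ rows; in the second subcase pick one surviving row for each of $c_i+1,\dots,c_{i+1}-3$ together with two realizing $c_{i+1}-2$, again $d_i-1$ rows. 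Every row chosen for a column $i>i_0$ has $a^i_{\vec{j}}>c_i$, hence is born at column $i$ and does not survive in any column of smaller index; since each row has a unique birth column, distinct choices genuinely give distinct rows, and the whole selection consists of $md+1-g$ distinct rows of $T_w(\vec{\delta})$.

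Now run the dropping procedure. After the initial Rule (i) step discarding all non-chosen rows, I claim inductively that when the procedure reaches column $i$ (proceeding $i=i_0,i_0+1,\dots,g$) the surviving rows appearing in column $i$ are exactly the rows chosen for column $i$: rows chosen for a later column are born later, so are erased in column $i$, while rows chosen for $i_0$ or for an earlier column have already been dropped at their own step. In column $i_0$, in the first subcase the chosen rows have the strictly increasing $a^{i_0}_{\vec{j}}$-values $0,i_0,i_0+1,\dots,c_{i_0+1}-1$, so repeated use of Rule (ii) (strict minimality of $a^{i_0}_{\vec{j}}$) drops them in increasing order; in the second subcase Rule (ii) removes all but the two rows with $a^{i_0}_{\vec{j}}=c_{i_0+1}-2$, which are then removed together by Rule (iv), since only two rows remain non-erased in that column. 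The identical argument, with the values $c_i+1,\dots$ in place of $0,i_0,\dots$, drops exactly $d_i-1$ rows from column $i$ for each $i>i_0$, none of which occurs in a previous column. Hence all $md+1-g$ chosen rows are eliminated, and by the first statement of Proposition~\ref{prop:surj-crit} the table $T_w(\vec{\delta})$ is $(md+1-g)$-expungeable.

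I expect the only real difficulty to be bookkeeping: making the counts match exactly at every column so that, in the inductive step, precisely the right $d_i-1$ (respectively $1+c_{i_0+1}-i_0$) rows are present when column $i$ (respectively $i_0$) is processed, and dealing with the degenerate ranges by the obvious minor modifications, e.g.\ when some $d_i$ is small enough that the range $c_i+1,\dots,c_{i+1}-3$ is empty (so Rule (iv) is applied directly to the two rows realizing $c_{i+1}-2$), or when $c_{i_0}\le 0$ so that the value $0$ genuinely occurs in column $i_0$, or when the doubled value $c_{i+1}-2$ is not strictly above all the listed once-values (in which case the order of applying Rules (ii) and (iv) is adjusted). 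The one geometric input beyond the first statement of the proposition---that a surviving row with $a^i_{\vec{j}}>c_i$ is born at column $i$---is immediate from Remark~\ref{rem:zeroing-pattern} and contiguity, so no new ideas are needed.
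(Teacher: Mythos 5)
Your argument is correct and follows essentially the same route as the paper's proof: select one row realizing each required value of $a^i_{\vec{j}}$ in each column (two for the doubled value $c_{i+1}-2$), use the inequalities $c_i \le a^i_{\vec{j}} \le c_{i+1}$ together with contiguity to see that each chosen row first appears in its designated column, and eliminate column by column via repeated Rule (ii) plus Rule (iv), invoking the first statement at the end. The only thing left implicit is the first statement itself, which is a one-line observation: the described procedure drops exactly $1+\sum_{i=1}^g(d_i-1)=md+1-g$ rows and is by definition an instance of $(md+1-g)$-expungeability.
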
 

Note that the condition on the non-erased portion of each row being contiguous is automatically satisfied for unimaginative $w$, or more generally for $w$ which are steady with respect to 
$T(\vec{\delta})$.

\begin{proof} The hypothesis of the first statement is just a special form of $(md+1-g)$-expungeability, since $1+\sum_{i=1}^g (d_i-1)=md+1-g$.

For the second statement, we observe that a number $a$ can occur as $a^i_{\vec{j}}$ in $T_w(\vec{\delta})$ only if we have $c_i \leq a \leq c_{i+1}$
 (here, we take $c_1=0$ and $c_{g+1}=md$): certainly, we must have $a \geq c_i$, but we must likewise have $b^i_{\vec{j}} \geq md-c_{i+1}$, and because
$a^i_{\vec{j}} + b^i_{\vec{j}} \leq md$, we also obtain $a \leq c_{i+1}$.
Now, we will denote by $S$ the set of $N$ rows chosen to verify $N$-expungeability, which we will construct one column at a time.

By hypothesis, we have $c_i<c_{i+1}$ for all $i>1$, so we see that if any of $0,\dots,c_{i_0+1}-1$ occur among the $a^i_{\vec{j}}$ in the $i$th  column with $i \geq i_0$, we must have $i=i_0$.
We have supposed that $c_{i_0+1}-(i_0-1)$ ($=1+\sum_{i=1}^{i_0} (d_i-1)$) of these values do occur,
so we can choose $S$ to contain exactly one row with each of these values in the $i_0$th column. 
Then, we can apply Lemma \ref{rulesii-v} (a) to drop the remaining $1+\sum_{i=1}^{i_0} (d_i-1)$ rows in this column. 
Then for $i>i_0$, the values $c_i+1,\dots,c_{i+1}-1$ can only occur in the $i$th column. 
Moreover, if $c_i+1 \leq a^i_{\vec{j}}$, then the $\vec{j}$th row cannot occur in a previous column, since $a^i_{\vec{j}}>c_i$ implies that the row cannot appear in the $(i-1)$st column, 
and we have assumed that the non-erased portions of each row are contiguous. 
Thus, we may again add rows to $S$ so that the $i$th column contains each value from $c_i+1$ to $c_{i+1}-1$ exactly once, and we can again apply Lemma \ref{rulesii-v} (a)
 to drop $c_{i+1}-c_i-1=d_i-1$ rows from the $i$th column.
Note that by construction, the number of rows in $S$ is precisely $1+\sum_{i=1}^g (d_i-1)=md+1-g$,and applying the first statement of the proposition, we conclude the desired result.

Finally, the more general case proceeds by exactly the same argument, except that in columns where $c_{i+1}-1$ is omitted, but $c_{i+1}-2$
occurs at least twice, we use Lemma \ref{rulesii-v} (c) to drop the final two rows in the column.
\end{proof}

\begin{ex} Consider the canonical series, with $r=g-1$ and $d=2g-2$. 
In this case, the only $(g,r,d)$-sequence is $\vec{\delta}=0,1,\dots,g-1$. The $i$th column of $T'(\vec{\delta})$ is:
\begin{center}
\begin{tabular}{lr}
$i-2$ & $2g-i-1$ \\
$i-1$ & $2g-i-2$ \\
$\vdots$ & $\vdots$ \\
$2i-4$ & $2g-2i+1$ \\
$2i-2$ & $2g-2i$ \\
$2i-1$ & $2g-2i-2$  \\
$\vdots$ & $\vdots$ \\
$i+g-2$ & $g-i-1$ \\
\end{tabular}
\end{center}
That is, 
\[ a^i_j=j+i-2, \ \ j\le i-2, \ a^i_j=j+i-1,\ j\ge i-1,\ \ \ b^i_j=2g-i-j-1, \ j \leq i-1, \ b^i_{j}=2g-i-j-2, j\ge i.\]

For any $m \geq 2$, $T(\vec{\delta})$ is obtained by adding $m$-tuples of rows of $T'(\vec{\delta})$.
Set $c=(c_2,\dots,c_g)$, with $c_i=a^i_{(i-2,\dots,i-2,g-1)}$ for all $i$. 
Then,  $c_{i+1}-c_i=2(m-1)+1$ for all $i$.

The rows $(0,\dots,0,j)$ for $0 \leq j \leq g-1$ all appear in the first column of  $T_w(\vec{\delta})$, 
and the corresponding values of $a^1_{\vec{j}}$ are $0,1,\dots,g-1=c_2-1$.
Next, in the $i$th column for $1 < i < g=r+1$, rows of the form $\vec{j}=(i-2,\dots,i-2,i-1,\dots,i-1,j)$ with $j =g-2$ or $g-1$ all appear in $T_w(\vec{\delta})$, except for $(i-2,\dots,i-2,g-2)$.
 The corresponding values of $a^i_{\vec{j}}$ yield $c_i,c_i+1,\dots,c_{i+1}-1$.
Finally, in the $g$th column, the rows  $(j_1,g-2,\dots,g-2,g-1,\dots,g-1, j_m)$ with $j_1=g-3$ or $g-2$ and $j_m=g-1$ all  appear with the exception of $(g-3,g-2,\dots,g-2,g-1)$ 
(which has $a^i_{\vec{j}}=c_g-1$), and the values of $a^i_{\vec{j}}$ these yield cover $c_g,c_g+1,\dots,md-2$.
Then the row $(g-1\dots,g-1)$ has $a^i_{\vec{j}}=md$, and (the $i_0=1$ case of) Proposition \ref{prop:surj-crit} gives us surjectivity.
\end{ex}

We now apply Proposition \ref{prop:surj-crit} to prove surjectivity within certain ranges, generalizing the canonical linear series, 
and including many cases which do not fall in the surjective range for $m=2$.
Recall from the introduction that although we only treat directly the case $m=3$,  surjectivity then follows for all higher $m$.

\begin{rem}\label{rem:allai}
Suppose that  $c=(c_2,\dots,c_g)$, and that the $c_i$ are nondecreasing.
Then in the $i$th column, each $a^i_{\vec{j}}$ whose corresponding section $s_{\vec{j}}$ does not vanish on the curve $Z_i$ is at least $c_i$. 
If we want every number to appear as some $a^{i'}_{\vec{j}}$ in $T_w(\vec{\delta})$, we need $c_i-1$ to appear as an  $a^{i'}_{\vec{j}}$ for some  $i'<i$ and unless  $c_{i-1}=c_i$,  $i'=i-1$.
If $c_i-1=a^{i-1}_{\vec{j}}$ for some $\vec{j}$, then $b^{i-1}_{\vec{j}} \geq md-c_i$. 
Since $a^{i-1}_{\vec{j}}+b^{i-1}_{\vec{j}}$ is given by $md-m$ plus the number of times $\delta_{i-1}$ occurs in $\vec{j}$, 
we conclude that $\delta_{i-1}$ must occur at least $m-1$ times in $\vec{j}$.
 Similarly, if $c_i-n$ appears as $a^{i-1}_{\vec{j}}$ for $1 \leq n < m$, we conclude that  $\delta_{i-1}$ occurs at least $m-n$ times in $\vec{j}$.
  If $w$ is unimaginative, we then derive a necessary and sufficient condition for numbers of the form $c_i-n$ to appear as $a^{i-1}_{\vec{j}}$ in $T_w(\vec{\delta})$ 
  for some $\vec{j}$: first, we must have $c_i-n \geq c_{i-1}$, second, $c_i-n$ must appear as some $a^{i-1}_{\vec{j}}$ in $T(\vec{\delta})$, and third, if $n<m$, it must do so in a row 
$\vec{j}$ with at least $m-n$ occurrences of $\delta_{i-1}$ in $\vec{j}$.
\end{rem}

\begin{cor}\label{cor:surj} Suppose that $m=3$, and $(g,r,d)$ satisfy $\rho \geq 0$. Then the Maximal Rank Conjecture holds in the following cases: 
\begin{ilist}
\itm if $g-d+r=1$, and $2r-3 \geq \rho + 1$;
\itm if $g-d+r=2$, $r \geq 4$, and $2r-3 \geq \rho + 2$.
\end{ilist}
Moreover, the locus of chains of genus-$1$ curves is not in the closure of the locus in $\cM_g$ where the maximal rank condition fails.
\end{cor}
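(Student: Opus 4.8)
The plan is to apply Proposition \ref{prop:surj-crit} directly, by exhibiting in each of the two cases an explicit $(g,r,d)$-sequence $\vec\delta$ and an unimaginative (hence steady, so that the contiguity hypothesis of Proposition \ref{prop:surj-crit} is automatic) tuple $w$ such that $T_w(\vec\delta)$ is $(md+1-g)$-expungeable via the second or third criterion of that proposition. Since $\binom{r+m}{m}\geq md+1-g$ here by the surjectivity hypothesis, $N$-expungeability with $N=md+1-g=\min(\binom{r+m}{m},md+1-g)$ gives the Maximal Rank Conjecture, and the steadiness (from unimaginativeness, via Proposition \ref{prop:unimaginative}) gives the statement about chains of genus-$1$ curves not lying in the closure of the failure locus, by the last sentence of Theorem \ref{thm:main-reduction}.

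First I would handle the arithmetic: with $m=3$, the hypothesis $md>2g-2$ of Proposition \ref{prop:gluing-surject} always holds (as noted in Remark \ref{rem:deg-bd}), so multidegrees with the $d_i$'s bounded below as there give $\dim\Gamma(X_0,\sL_{\md(w)})=3d+1-g$. The condition $r+g-d=1$ (resp.\ $2$) together with $\rho\geq 0$ pins down the structure: each integer $0,\dots,r$ occurs $r+g-d$ times in any $(g,r,d)$-sequence, so $g=(r+1)(r+g-d)+\rho$, i.e.\ $g=r+1+\rho$ (resp.\ $g=2(r+1)+\rho$). I would take $\vec\delta$ to be a nondecreasing sequence (each value repeated $r+g-d$ times, with the extra $\rho$ copies of some values distributed in a convenient way — e.g.\ appended as extra $0$'s or spread to keep the sequence valid), mirroring the construction in the canonical example above, and take $c_i=a^i_{(i-2,\dots,i-2,r)}$ or a small variant, so that $c_{i+1}-c_i$ is a constant ($2(m-1)+1=5$ when $r+g-d=1$ with nondecreasing $\delta$, slightly different when $r+g-d=2$), making $w$ unimaginative. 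The inequalities $2r-3\geq\rho+1$ and $2r-3\geq\rho+2$ should be exactly what is needed to guarantee that, column by column from left to right (starting at $i_0=1$), the values $c_i+1,\dots,c_{i+1}-1$ (allowing the $c_{i+1}-2$-occurring-twice alternative near the ends, using Rule (iv)) all appear among the $a^i_{\vec j}$ of rows not yet used — i.e.\ the hypotheses of the third bullet form of Proposition \ref{prop:surj-crit}.

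The main obstacle I expect is the bookkeeping in the $r+g-d=2$ case: with two copies of each $\delta$-value the block structure of $T'(\vec\delta)$ is more complicated (each value spans two columns), the extra $\rho$ repetitions must be placed so that $\vec\delta$ remains a valid $(g,r,d)$-sequence and so that the relevant $a^i_{\vec j}$-values still fill out the required intervals, and one must verify that no row needed in a later column has already appeared (the contiguity of non-erased portions, automatic from unimaginativeness, is what makes this manageable). Checking that $2r-3\geq\rho+2$ is precisely the threshold — and that the boundary columns $i=i_0$ and $i=g$ still work with the "$c_{i+1}-2$ twice" escape clause — is the delicate part; the hypothesis $r\geq 4$ in (ii) is presumably forced by the need for enough room in these boundary columns. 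Everything else is a routine translation of the canonical-series computation already carried out in the excerpt.
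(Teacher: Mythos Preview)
Your overall strategy is right: one applies Proposition~\ref{prop:surj-crit} to an explicit $(g,r,d)$-sequence $\vec\delta$ and an unimaginative $w$, and then the steadiness/closure statement follows from Proposition~\ref{prop:unimaginative} and Theorem~\ref{thm:main-reduction}. However, the specific choices you propose do not work once $\rho>0$, and the actual construction is qualitatively different from the canonical-series example, not merely a ``small variant'' of it.

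Concretely, your claim that $c_{i+1}-c_i$ can be taken constant (equal to $5$ in case~(i)) with $c_i=a^i_{(i-2,\dots,i-2,r)}$ fails as soon as $\vec\delta$ has repeated entries. The paper instead puts the $\rho$ extra copies of $0$ at the \emph{front} of $\vec\delta$, takes the first several $c_i$ to be \emph{negative} (so those columns of $T_w(\vec\delta)$ are empty), and uses $i_0=\rho+3-n$ in case~(i) (respectively $i_0=\rho+4-n$ in case~(ii)), where $n=\min(r-1,\rho+2)$ (respectively $\min(r-1,\rho+1)$, with $n=2$ if $\rho=0$). The remaining $c_i$ are defined as $a^i_{\vec j_i}$ for a carefully varying list of indices $\vec j_i$---for instance in case~(i) one uses rows of the form $(0,n-i,n-i)$ for a range of columns and then switches to $(i-2,i-2,r)$; in case~(ii) there are further special rows like $(r-3,r-2,r)$ and $(r-2,r-1,r-1)$ near the end. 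The resulting differences $c_{i+1}-c_i$ take several different values ($3$, $4$, $5$, and one larger jump depending on $\rho$ and $n$), and verifying unimaginativeness and the column-by-column hypothesis of Proposition~\ref{prop:surj-crit} (including repeated use of the ``$c_{i+1}-2$ occurring twice'' clause) is a substantial case analysis. The hypotheses $2r-3\geq\rho+1$ (resp.\ $\rho+2$) and $r\geq 4$ enter at specific identifiable steps---e.g.\ to ensure $r\geq\rho+3-n$ so that the $(0,0,j_3)$ rows cover the required range in column $i_0$, and to guarantee $\rho+r-2n+4\geq 3$ (resp.\ $\rho+r-2n+3\geq 3$) so that $w$ is unimaginative---rather than as a generic ``enough room'' condition. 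The bookkeeping you anticipate in case~(ii) is real, but the missing piece is the design of $w$ itself, not just its verification.
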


\begin{proof} 
In case (i), we set $\vec{\delta}$ to be the sequence whose first $\rho$ entries are $0$, followed by $0,1,\dots,r$.
 As by assumption,  $g-d+r=1$,  it follows that $\rho+(r+1)=g$ and this choice gives a $\vec{\delta}$ sequence. 
Set $n=\min(r-1,\rho+2)$,  $c=(c_2,\dots,c_g)$, where  
\[ c_i=-3(\rho+3-n-i)-1,\  2 \leq i \leq \rho+3-n; \ \ \ c_i=a^i_{\vec{j}_i} ,\ i \ge \rho+4-n \]
with
\[ \vec{j}_{\rho+2-t} = (0,n-t,n-t) \text{ for }0 \leq t \leq n-2,\ \ \vec{j}_{\rho+t} = (t-2,t-2,r) \text{ for }3 \leq t \leq r+1\]
We first check that   $w$ is unimaginative: 
\[ c_i-c_{i-1}= 3 \text{ for }2<i \leq \rho+3-n\]
\[ c_{\rho+4-n}-c_{\rho+3-n}=a_{(0,2,2)}^{\rho+4-n}-(-1)=1+2(\rho+5-n)\ge 4\] 
\[ c_{\rho+2-t}-c_{\rho+1-t}=a^{\rho+2-t}_{(0,n-t,n-t)}-a^{\rho+1-t}_{(0,n-t-1,n-t-1)}=4. \text{ for }0 \leq t < n-2,\]   
\[ c_{\rho+3}-c_{\rho+2}=a^{\rho+3}_{(1,1,r)}-a^{\rho+2}_{(0,n,n)}=(2(\rho+2)+r+\rho+2)-2(\rho+1+n)=\rho+r-2n+4\]
and as  $n\leq r-1,  n\leq \rho+2$, then $\rho+r-2n+4 \geq 3$.
\[ c_{\rho+t}-c_{\rho+t-1}=a^{\rho+t}_{(t-2,t-2,r)}-a^{\rho+t-1}_{(t-3,t-3,r)}=5 \text{ for } 3 < t \leq r+1\] 

Also,  $T_w(\vec{\delta})$ satisfies the condition of Proposition \ref{prop:surj-crit}. 
Specifically, no rows will appear in the first $\rho+2-n$ columns. 
Using the inequality $2r-3 \geq \rho+1$, if $r-1\ge n$ we obtain  $r \geq \rho+3-n$ while if $n=\rho +2$, $2r-3 \geq 1=\rho+3-n$.
Then in the $(\rho+3-n)$th column, rows of the form $(0,0,j_3)$ with $0 \leq j \leq \rho+3-n$ will yield $a^i_{\vec{j}}$ equal to $0,\rho+3-n,\rho+4-n,\dots,2(\rho+3-n)-1$.
Then the rows $(0,1,1)$, $(0,1,2)$, $(0,2,2)$, $(0,1,3)$ give $2(\rho+3-n)$, $2(\rho+3-n)+1$, and $2(\rho+4-n)$ twice. 
We thus have the numbers $0$ through $2(\rho+4-n)$ occurring with $\rho+2-n$ gaps in this column, and with $2(\rho+4-n)$ occurring twice. 
Then in the $\rho+2-t$th column for $t=n-2,\dots,1$, we will have the rows  $(0,n-t,n-t)$, $(0,n-t,n+1-t)$, $(0,n+1-t,n+1-t)$ 
and $(0,n-t,n+2-t)$ contributing $2(\rho+1+n-2t)$, $2(\rho+1+n-2t)+1$, and  $2(\rho+2+n-2t)$ twice.
In each case, we will have skipped $c_{\rho+2-t}-1=2(\rho+1+n-2t)-1$, but  we can still apply Proposition \ref{prop:surj-crit} because 
$2(\rho+1+n-2t)-2$ will have appeared twice in the previous column.

Next, in the $(\rho+2)$nd column, the rows $(1,1,j_3)$ for  $1 \leq j_3 \leq r$ cover all values from $\max(3(\rho+2),c_{\rho+2})$ to 
$c_{\rho+3}-1$. If $3(\rho+2) \leq c_{\rho+2}$, these rows suffice in this column, and otherwise, we must have $n=r-1$.
The hypothesis $2r-3 \geq \rho+1$ implies that $c_{\rho+2} \geq 3(\rho+2)-2$, so adding in the rows $(0,r-1,r-1)$ and $(0,r-1,r)$ allows us to cover all values between
$c_{\rho+2}$ and $c_{\rho+3}-1$.
In the $(\rho+t)$th column for $t=3,\dots,r$, the rows $(t-2,t-2,r)$, $(t-2,t-1,r-1)$, $(t-2,t-1,r)$, $(t-1,t-1,r-1)$,  $(t-1,t-1,r)$ give the values from $c_{\rho+t}$ to $c_{\rho+t+1}-1$.
Finally, in the $(\rho+r+1)$st column, the rows $(r-1,r-1,r)$, $(r-2,r,r)$, $(r-1,r,r)$, $(r,r,r)$ give the values from $c_{\rho+r+1}$ to $3d$, skipping only $3d-1$.
Applying Proposition \ref{prop:surj-crit}, we conclude the desired  statement for case (i).

For case (ii), the pattern is similar, but a bit more complicated.
We set $\vec{\delta}$ to be the sequence whose first $\rho$ entries are $0$, followed by $0,0,1,1,\dots,r,r$.
 As by assumption,  $g-d+r=2$,  it follows that $\rho+2(r+1)=g$ and this choice gives a $\vec{\delta}$ sequence. 
 Define  
 \[ n=\min(r-1,\rho+1), \text {if } \rho>0,\ \ n=2 \text{  if  } \rho=0\] 
\[ c=(c_2,\dots,c_g), c_i= -3(\rho+4-n-i)-1\text{ for }2 \leq i \leq \rho+4-n, \ \ c_i=a^i_{\vec{j}_i},  \text{ for } \rho+4-n <i\]
with
\[ \vec{j}_{\rho+3-t} = (0,n-t,n-t) \text{ for }0 \leq t \leq n-2, \]
\[ \vec{j}_{\rho+2t} = (t-1,t-1,r-2), \ 2 \leq t \leq r-2;\ \vec{j}_{\rho+2t+1} = (t-1,t-1,r),   \ 2 \leq t \leq r-1;\]
\[ \vec{j}_{\rho+2r-2} = (r-3,r-2,r), \vec{j}_{\rho+2r} = (r-2,r-1,r-1), \vec{j}_{\rho+2r+1} = (r-3,r-1,r), \vec{j}_{\rho+2r+2} = (r-2,r,r) \]

We check that $w$ is unimaginative: 
\[ c_i-c_{i-1}=3\text{ for }2<i \leq \rho+4-n,\]
\[ c_{\rho+5-n}-c_{\rho+4-n}=a^{\rho+5-n}  _{(0,2,2)}-(-1)= 1+2(\rho+6-n),\]
\[ c_{\rho+3-t}-c_{\rho+2-t}= a^{\rho+3-t}  _{(0,n-t,n-t)}-a^{\rho+2-t}  _{(0,n-t-1,n-t-1)} =  4,  \text{ for }0 \leq t \leq n-3,\] 
\[ c_{\rho+4}-c_{\rho+3}=  a^{\rho+4}_{(1,1,r-2)} -a^{\rho+3}  _{(0,n,n)}=  (2(\rho+3)+r+\rho+1)-2(\rho+2+n)=\rho+r-2n+3,\]
If $\rho=0$, as $r\ge 4$,\ $\rho+r-2n+3=r-4+3\ge 3$. 
If $\rho>0$, as $n=\min(r-1,\rho+1)$, $\rho+r-2n+3\ge \rho+r-r+1-\rho-1+3\ge 3$.
\[c_{\rho+2t+1}-c_{\rho+2t}=a^{\rho+2t+1}_{(t-1,t-1,r)} -a^{\rho+2t}  _{(t-1,t-1,r-2)}=3\text{ for }2 \leq t \leq r-2,\] 
\[ c_{\rho+2t}-c_{\rho+2t-1}=a^{\rho+2t}  _{(t-1,t-1,r-2)}- a^{\rho+2t-1}_{(t-2,t-2,r)} =5\text{ for }3 \leq t \leq r-2,\]
\[c_{\rho+2r-2}-c_{\rho+2r-3}=5, \ c_{\rho+2r-1}-c_{\rho+2r-2}=3, \ c_{\rho+2r}-c_{\rho+2r-1}=3 \]
\[ c_{\rho+2r+1}-c_{\rho+2r}=3, \ c_{\rho+2r+2}-c_{\rho+2r+1}=5. \] 

We again verify that $T_w(\vec{\delta})$ will satisfy the condition of Proposition \ref{prop:surj-crit}. 
Specifically, no rows will appear in the first $\rho+3-n$ columns. 
We clam that $r \geq \rho+4-n$: if $\rho=0$, $\rho+4-n=2<4\leq r$, while if $\rho\not= 0$, from the definition of $n$,  $\rho+4-n$ equals either $\rho+5-r$  or $3$
and both these quantities are at most $r$ from the assumptions $2r-3\geq \rho+2$ and $r \geq 4$.
 Then in the $(\rho+4-n)$th column, rows of the form $(0,0,j_3)$ with $0 \leq j \leq \rho+4-n$ will include $a^i_{\vec{j}}$ equal to $0,\rho+4-n,\rho+5-n,\dots,2(\rho+4-n)-1$.
Then the rows $(0,1,1)$, $(0,1,2)$, $(0,2,2)$, $(0,1,3)$ give $2(\rho+4-n)$, $2(\rho+4-n)+1$, and $2(\rho+5-n)$ twice. 
We thus have the numbers $0$ through $2(\rho+5-n)$ occurring with $\rho+3-n$ gaps in this column, and with $2(\rho+5-n)$ occurring twice. 
Then in the $\rho+3-i$th column for $i=n-2,\dots,1$, we will have the rows  $(0,n-i,n-i)$, $(0,n-i,n+1-i)$, $(0,n+1-i,n+1-i)$ and
$(0,n-i,n+2-i)$ contributing $2(\rho+2+n-2i)$, $2(\rho+2+n-2i)+1$, and 
$2(\rho+3+n-2i)$ twice.
In each case, we will have skipped $c_{\rho+3-i}-1=2(\rho+2+n-2i)-1$, but 
we can still apply Proposition \ref{prop:surj-crit} because 
$2(\rho+2+n-2i)-2$ will have appeared twice in the previous column.

Next, in the $(\rho+3)$rd column, the rows $(1,1,j_3)$ for 
$1 \leq j_3 \leq r-2$ cover all values from $\max(3(\rho+3),c_{\rho+3})$ to 
$c_{\rho+4}-1$.
If $3(\rho+3) \leq c_{\rho+3}$, these rows suffice in this column, and
otherwise, the hypothesis
$2r-3 \geq \rho+2$ implies that adding the rows $(0,n,n)$ and $(0,n,n+1)$
suffices to cover all values from $c_{\rho+3}$ up to $3(\rho+3)-1$.
In the $(\rho+2i)$nd column for $i=2,\dots,r-2$, the rows
$(i-1,i-1,r-2)$, $(i-1,i-1,r-1)$ and $(i-1,i-1,r)$ give the values from 
$c_{\rho+2i}$ to $c_{\rho+2i+1}-1$. In the
$(\rho+2i+1)$st column for $i=2,\dots,r-2$,
the rows $(i-1,i-1,r)$, $(i-1,i,r-2)$, $(i-1,i,r-1)$, $(i-1,i,r)$, 
$(i,i,r-2)$ give the values from $c_{\rho+2i+1}$ to $c_{\rho+2i+2}-1$.
We have to change the pattern slightly in the final
five columns, as follows: in the $(\rho+2r-2)$nd column, the final row
of the previous column was $(r-2,r-2,r-2)$, but this does not appear in
the $(\rho+2r-2)$nd column, because $c_{\rho+2r-2}$ was chosen to be one
larger than the corresponding $a^i_{\vec{j}}$. Instead, $c_{\rho+2r-2}$
will be achieved by the $(r-3,r-2,r)$ row,
and then the $(r-2,r-2,r-1)$ and $(r-2,r-2,r)$ rows cover through
$c_{\rho+2r-1}-1$. In the $(\rho+2r-1)$st column, the rows 
$(r-2,r-2,r)$, $(r-3,r-1,r-1)$, $(r-2,r-1,r-1)$ cover from $c_{\rho+2r-1}$
to $c_{\rho+2r}-1$. In the $(\rho+2r)$th column, the rows
$(r-2,r-1,r-1)$, $(r-3,r-1,r)$, $(r-1,r-1,r-1)$ cover from $c_{\rho+2r}$ to
$c_{\rho+2r+1}-1$. In the $(\rho+2r+1)$st column, the rows 
$(r-3,r-1,r)$, $(r-2,r-1,r)$, $(r-1,r-1,r)$, $(r-3,r,r)$, $(r-2,r,r)$
cover from $c_{\rho+2r+1}$ to $c_{\rho+2r+2}-1$, and in the final
column, the rows $(r-2,r,r)$, $(r-1,r,r)$, $(r,r,r)$ will cover from
$c_{\rho+2r+2}$ to $3d$, omitting only $3d-1$.
Applying Proposition \ref{prop:surj-crit}, we conclude the desired 
statement for case (ii).
\end{proof}


\section{The case of cubics}\label{sec:m3}

We conclude with a discussion of the $m=3$ case. Rather than attempting
to prove that it holds for every case of given small $r$, which requires
extensive case-by-case analysis, we will treat what appear to be the
``hardest'' cases for each of $r=3,4,5$, each of which is in the injective 
range, and then conclude by Proposition \ref{prop:injective-extend} that
the Maximal Rank Conjecture holds for all but finitely many cases for
each $r$. The aforementioned ``hardest case'' for each $r$ is somewhat 
parallel to the critical cases for $m=2$ addressed in Proposition 
\ref{prop:m2-critical}; specifically, we take the smallest $g$ such that
all non-injective cases occur in genera strictly smaller than $g$. For
$r=5$, this case happens to be also in the surjective range.
For $r=3$ and $r=4$ these cases are not in the surjective range, although 
the $r=3$ example will imply a case having genus one
greater which is simultaneously in the injective and surjective ranges.

The three examples are as follows.


\begin{ex}\label{ex:m3-r3} Consider the case $r=3$, $g=7$, $d=9$.
Then $\binom{r+3}{3}=20$, and $3d+1-g=21$; we see that this is in 
the injective range. We take the extendable $(g,r,d)$-sequence 
$\vec{\delta}=0,0,1,1,2,2,3$, which gives $T'(\vec{\delta})$ as follows.
\begin{center}
\begin{tabular}{lr|lr|lr|lr|lr|lr|lr}
$0$ & $9$ & $0$ & $9$ & $0$ & $8$ & $1$ & $7$ & $2$ & $6$ & $3$ & $5$ & $4$ & $4$ \\
$1$ & $7$ & $2$ & $6$ & $3$ & $6$ & $3$ & $6$ & $3$ & $5$ & $4$ & $4$ & $5$ & $3$ \\
$2$ & $6$ & $3$ & $5$ & $4$ & $4$ & $5$ & $3$ & $6$ & $3$ & $6$ & $3$ & $6$ & $2$ \\
$3$ & $5$ & $4$ & $4$ & $5$ & $3$ & $6$ & $2$ & $7$ & $1$ & $8$ & $0$ & $9$ & $0$ \\
\end{tabular}
\end{center}
We then get $T(\vec{\delta})$ as follows.
\begin{center}
\begin{tabular}{llr|lr|lr|lr|lr|lr|lr}
 &  &  $23$ & $4$ & $20$ & $7$ & $17$ & $10$ & $14$ & $13$ & $10$ & $17$ & $6$ & $21$ & \\
\hline
$(0,0,0)$ & \cellcolor[gray]{.8} $0$ & \cellcolor[gray]{.8} $27$ & $0$ & $27$ & $0$ & $24$ & $3$ & $21$ & $6$ & $18$ & $9$ & $15$ & $12$ & $12$ \\
$(0,0,1)$ & \cellcolor[gray]{.8} $1$ & \cellcolor[gray]{.8} $25$ & $2$ & $24$ & $3$ & $22$ & $5$ & $20$ & $7$ & $17$ & $10$ & $14$ & $13$ & $11$ \\
$(0,0,2)$ & \cellcolor[gray]{.8} $2$ & \cellcolor[gray]{.8} $24$ & $3$ & $23$ & $4$ & $20$ & $7$ & $17$ & $10$ & $15$ & $12$ & $13$ & $14$ & $10$ \\
$(0,1,1)$ & \cellcolor[gray]{.8} $2$ & \cellcolor[gray]{.8} $23$ & \cellcolor[gray]{.8} $4$ & \cellcolor[gray]{.8} $21$ & $6$ & $20$ & $7$ & $19$ & $8$ & $16$ & $11$ & $13$ & $14$ & $10$ \\
$(0,0,3)$ & \cellcolor[gray]{.8} $3$ & \cellcolor[gray]{.8} $23$ & \cellcolor[gray]{.8} $4$ & \cellcolor[gray]{.8} $22$ & $5$ & $19$ & $8$ & $16$ & $11$ & $13$ & $14$ & $10$ & $17$ & $8$ \\
$(0,1,2)$ & $3$ & $22$ & \cellcolor[gray]{.8} $5$ & \cellcolor[gray]{.8} $20$ & \cellcolor[gray]{.8} $7$ & \cellcolor[gray]{.8} $18$ & $9$ & $16$ & $11$ & $14$ & $13$ & $12$ & $15$ & $9$ \\
$(1,1,1)$ & $3$ & $21$ & $6$ & $18$ & \cellcolor[gray]{.8} $9$ & \cellcolor[gray]{.8} $18$ & $9$ & $18$ & $9$ & $15$ & $12$ & $12$ & $15$ & $9$ \\
$(0,1,3)$ & $4$ & $21$ & $6$ & $19$ & \cellcolor[gray]{.8} $8$ & \cellcolor[gray]{.8} $17$ & \cellcolor[gray]{.8} $10$ & \cellcolor[gray]{.8} $15$ & $12$ & $12$ & $15$ & $9$ & $18$ & $7$ \\
$(0,2,2)$ & $4$ & $21$ & $6$ & $19$ & $8$ & $16$ & $11$ & $13$ & \cellcolor[gray]{.8} $14$ & \cellcolor[gray]{.8} $12$ & $15$ & $11$ & $16$ & $8$ \\
$(1,1,2)$ & $4$ & $20$ & $7$ & $17$ & $10$ & $16$ & \cellcolor[gray]{.8} $11$ & \cellcolor[gray]{.8} $15$ & $12$ & $13$ & $14$ & $11$ & $16$ & $8$ \\
$(0,2,3)$ & $5$ & $20$ & $7$ & $18$ & $9$ & $15$ & $12$ & $12$ & \cellcolor[gray]{.8} $15$ & \cellcolor[gray]{.8} $10$ & \cellcolor[gray]{.8} $17$ & \cellcolor[gray]{.8} $8$ & $19$ & $6$ \\
$(1,1,3)$ & $5$ & $19$ & $8$ & $16$ & $11$ & $15$ & \cellcolor[gray]{.8} $12$ & \cellcolor[gray]{.8} $14$ & \cellcolor[gray]{.8} $13$ & \cellcolor[gray]{.8} $11$ & $16$ & $8$ & $19$ & $6$ \\
$(1,2,2)$ & $5$ & $19$ & $8$ & $16$ & $11$ & $14$ & $13$ & $12$ & \cellcolor[gray]{.8} $15$ & \cellcolor[gray]{.8} $11$ & $16$ & $10$ & $17$ & $7$ \\
$(0,3,3)$ & $6$ & $19$ & $8$ & $17$ & $10$ & $14$ & $13$ & $11$ & $16$ & $8$ & $19$ & $5$ & \cellcolor[gray]{.8} $22$ & \cellcolor[gray]{.8} $4$ \\
$(1,2,3)$ & $6$ & $18$ & $9$ & $15$ & $12$ & $13$ & $14$ & $11$ & $16$ & $9$ & \cellcolor[gray]{.8} $18$ & \cellcolor[gray]{.8} $7$ & $20$ & $5$ \\
$(2,2,2)$ & $6$ & $18$ & $9$ & $15$ & $12$ & $12$ & $15$ & $9$ & $18$ & $9$ & \cellcolor[gray]{.8} $18$ & \cellcolor[gray]{.8} $9$ & $18$ & $6$ \\
$(1,3,3)$ & $7$ & $17$ & $10$ & $14$ & $13$ & $12$ & $15$ & $10$ & $17$ & $7$ & $20$ & $4$ & \cellcolor[gray]{.8} $23$ & \cellcolor[gray]{.8} $3$ \\
$(2,2,3)$ & $7$ & $17$ & $10$ & $14$ & $13$ & $11$ & $16$ & $8$ & $19$ & $7$ & \cellcolor[gray]{.8} $20$ & \cellcolor[gray]{.8} $6$ & \cellcolor[gray]{.8} $21$ & \cellcolor[gray]{.8} $4$ \\
$(2,3,3)$ & $8$ & $16$ & $11$ & $13$ & $14$ & $10$ & $17$ & $7$ & $20$ & $5$ & $22$ & $3$ & \cellcolor[gray]{.8} $24$ & \cellcolor[gray]{.8} $2$ \\
$(3,3,3)$ & $9$ & $15$ & $12$ & $12$ & $15$ & $9$ & $18$ & $6$ & $21$ & $3$ & $24$ & $0$ & \cellcolor[gray]{.8} $27$ & \cellcolor[gray]{.8} $0$ \\
\hline
 &  &  $23$ & $4$ & $20$ & $7$ & $17$ & $10$ & $14$ & $13$ & $10$ & $17$ & $6$ & $21$ & \\
\end{tabular}
\end{center}
The highlighted entries show $T_w(\vec{\delta})$ for $c=(4,7,10,13,17,21)$, which is unimaginative. 
As in earlier examples, we have placed the $c_i$ and $md-c_i$ at the top and bottom of the table to make the erasure procedures clearer.

Now, by applying Lemma \ref{rulesii-v}(a) to the first, third, fourth and seventh columns, 
we can drop rows $(0,0,0)$, $(0,0,1)$, $(0,1,2)$, $(0,1,3)$, $(1,1,1)$, $(1,1,2)$, $(1,1,3)$, $(2,2,3)$, $(0,3,3)$, $(1,3,3)$, $(2,3,3)$ and  $(3,3,3)$. 
Applying Lemma \ref{rulesii-v}(b) to the sixth column, we can also drop rows $(1,2,3)$, $(0,2,3)$, and $(2,2,2)$. 
This leaves only five rows, which can all be dropped using Lemma \ref{rulesii-v}(c) in the second, first and fifth columns.
\end{ex}

\begin{ex}\label{ex:m3-r4} Consider the case $r=4$, $g=16$, $d=17$.
Then $\binom{r+3}{3}=35$, and $3d+1-g=36$, so this is in the injective range, but not the surjective range.
We take the extendable $(g,r,d)$-sequence 
$\vec{\delta}=0,0,0,0,1,1,1,2,2,2,3,3,3$, which gives $T'(\vec{\delta})$ as follows.
\begin{center}
\resizebox{\textwidth}{!}{
\begin{tabular}{lr|lr|lr|lr|lr|lr|lr|lr|lr|lr|lr|lr|lr|lr|lr|lr}
$0$ & $17$ & $0$ & $17$ & $0$ & $17$ & $0$ & $17$ & $0$ & $16$ & $1$ & $15$ & $2$ & $14$ & $3$ & $13$ & $4$ & $12$ & $5$ & $11$ & $6$ & $10$ & $7$ & $9$ & $8$ & $8$ & $9$ & $7$ & $10$ & $6$ & $11$ & $5$ \\
$1$ & $15$ & $2$ & $14$ & $3$ & $13$ & $4$ & $12$ & $5$ & $12$ & $5$ & $12$ & $5$ & $12$ & $5$ & $11$ & $6$ & $10$ & $7$ & $9$ & $8$ & $8$ & $9$ & $7$ & $10$ & $6$ & $11$ & $5$ & $12$ & $4$ & $13$ & $3$ \\
$2$ & $14$ & $3$ & $13$ & $4$ & $12$ & $5$ & $11$ & $6$ & $10$ & $7$ & $9$ & $8$ & $8$ & $9$ & $8$ & $9$ & $8$ & $9$ & $8$ & $9$ & $7$ & $10$ & $6$ & $11$ & $5$ & $12$ & $4$ & $13$ & $3$ & $14$ & $2$ \\
$3$ & $13$ & $4$ & $12$ & $5$ & $11$ & $6$ & $10$ & $7$ & $9$ & $8$ & $8$ & $9$ & $7$ & $10$ & $6$ & $11$ & $5$ & $12$ & $4$ & $13$ & $4$ & $13$ & $4$ & $13$ & $4$ & $13$ & $3$ & $14$ & $2$ & $15$ & $1$ \\
$4$ & $12$ & $5$ & $11$ & $6$ & $10$ & $7$ & $9$ & $8$ & $8$ & $9$ & $7$ & $10$ & $6$ & $11$ & $5$ & $12$ & $4$ & $13$ & $3$ & $14$ & $2$ & $15$ & $1$ & $16$ & $0$ & $17$ & $0$ & $17$ & $0$ & $17$ & $0$ \\
\end{tabular}
}
\end{center}
We then get $T(\vec{\delta})$ as follows.
\begin{center}
\resizebox{\textwidth}{!}{
\begin{tabular}{llr|lr|lr|lr|lr|lr|lr|lr|lr|lr|lr|lr|lr|lr|lr|lr}
 &   &  $48$ & $3$ & $46$ & $5$ & $44$ & $7$ & $39$ & $12$ & $35$ & $16$ & $32$ & $19$ & $29$ & $22$ & $27$ & $24$ & $23$ & $28$ & $20$ & $31$ & $16$ & $35$ & $14$ & $37$ & $10$ & $41$ & $7$ & $44$ & $4$ & $47$ &  \\
\hline
$(0,0,0)$ & \cellcolor[gray]{.8} $0$ & \cellcolor[gray]{.8} $51$ & $0$ & $51$ & $0$ & $51$ & $0$ & $51$ & $0$ & $48$ & $3$ & $45$ & $6$ & $42$ & $9$ & $39$ & $12$ & $36$ & $15$ & $33$ & $18$ & $30$ & $21$ & $27$ & $24$ & $24$ & $27$ & $21$ & $30$ & $18$ & $33$ & $15$ \\
$(0,0,1)$ & \cellcolor[gray]{.8} $1$ & \cellcolor[gray]{.8} $49$ & $2$ & $48$ & $3$ & $47$ & $4$ & $46$ & $5$ & $44$ & $7$ & $42$ & $9$ & $40$ & $11$ & $37$ & $14$ & $34$ & $17$ & $31$ & $20$ & $28$ & $23$ & $25$ & $26$ & $22$ & $29$ & $19$ & $32$ & $16$ & $35$ & $13$ \\
$(0,0,2)$ & \cellcolor[gray]{.8} $2$ & \cellcolor[gray]{.8} $48$ & \cellcolor[gray]{.8} $3$ & \cellcolor[gray]{.8} $47$ & $4$ & $46$ & $5$ & $45$ & $6$ & $42$ & $9$ & $39$ & $12$ & $36$ & $15$ & $34$ & $17$ & $32$ & $19$ & $30$ & $21$ & $27$ & $24$ & $24$ & $27$ & $21$ & $30$ & $18$ & $33$ & $15$ & $36$ & $12$ \\
$(0,1,1)$ & $2$ & $47$ & $4$ & $45$ & $6$ & $43$ & \cellcolor[gray]{.8} $8$ & \cellcolor[gray]{.8} $41$ & $10$ & $40$ & $11$ & $39$ & $12$ & $38$ & $13$ & $35$ & $16$ & $32$ & $19$ & $29$ & $22$ & $26$ & $25$ & $23$ & $28$ & $20$ & $31$ & $17$ & $34$ & $14$ & $37$ & $11$ \\
$(0,0,3)$ & $3$ & $47$ & \cellcolor[gray]{.8} $4$ & \cellcolor[gray]{.8} $46$ & \cellcolor[gray]{.8} $5$ & \cellcolor[gray]{.8} $45$ & $6$ & $44$ & $7$ & $41$ & $10$ & $38$ & $13$ & $35$ & $16$ & $32$ & $19$ & $29$ & $22$ & $26$ & $25$ & $24$ & $27$ & $22$ & $29$ & $20$ & $31$ & $17$ & $34$ & $14$ & $37$ & $11$ \\
$(0,1,2)$ & $3$ & $46$ & $5$ & $44$ & $7$ & $42$ & \cellcolor[gray]{.8} $9$ & \cellcolor[gray]{.8} $40$ & $11$ & $38$ & $13$ & $36$ & $15$ & $34$ & $17$ & $32$ & $19$ & $30$ & $21$ & $28$ & $23$ & $25$ & $26$ & $22$ & $29$ & $19$ & $32$ & $16$ & $35$ & $13$ & $38$ & $10$ \\
$(1,1,1)$ & $3$ & $45$ & $6$ & $42$ & $9$ & $39$ & $12$ & $36$ & \cellcolor[gray]{.8} $15$ & \cellcolor[gray]{.8} $36$ & $15$ & $36$ & $15$ & $36$ & $15$ & $33$ & $18$ & $30$ & $21$ & $27$ & $24$ & $24$ & $27$ & $21$ & $30$ & $18$ & $33$ & $15$ & $36$ & $12$ & $39$ & $9$ \\
$(0,0,4)$ & $4$ & $46$ & $5$ & $45$ & \cellcolor[gray]{.8} $6$ & \cellcolor[gray]{.8} $44$ & \cellcolor[gray]{.8} $7$ & \cellcolor[gray]{.8} $43$ & $8$ & $40$ & $11$ & $37$ & $14$ & $34$ & $17$ & $31$ & $20$ & $28$ & $23$ & $25$ & $26$ & $22$ & $29$ & $19$ & $32$ & $16$ & $35$ & $14$ & $37$ & $12$ & $39$ & $10$ \\
$(0,1,3)$ & $4$ & $45$ & $6$ & $43$ & $8$ & $41$ & \cellcolor[gray]{.8} $10$ & \cellcolor[gray]{.8} $39$ & \cellcolor[gray]{.8} $12$ & \cellcolor[gray]{.8} $37$ & $14$ & $35$ & $16$ & $33$ & $18$ & $30$ & $21$ & $27$ & $24$ & $24$ & $27$ & $22$ & $29$ & $20$ & $31$ & $18$ & $33$ & $15$ & $36$ & $12$ & $39$ & $9$ \\
$(0,2,2)$ & $4$ & $45$ & $6$ & $43$ & $8$ & $41$ & \cellcolor[gray]{.8} $10$ & \cellcolor[gray]{.8} $39$ & \cellcolor[gray]{.8} $12$ & \cellcolor[gray]{.8} $36$ & $15$ & $33$ & $18$ & $30$ & $21$ & $29$ & $22$ & $28$ & $23$ & $27$ & $24$ & $24$ & $27$ & $21$ & $30$ & $18$ & $33$ & $15$ & $36$ & $12$ & $39$ & $9$ \\
$(1,1,2)$ & $4$ & $44$ & $7$ & $41$ & $10$ & $38$ & $13$ & $35$ & $16$ & $34$ & \cellcolor[gray]{.8} $17$ & \cellcolor[gray]{.8} $33$ & $18$ & $32$ & $19$ & $30$ & $21$ & $28$ & $23$ & $26$ & $25$ & $23$ & $28$ & $20$ & $31$ & $17$ & $34$ & $14$ & $37$ & $11$ & $40$ & $8$ \\
$(0,1,4)$ & $5$ & $44$ & $7$ & $42$ & $9$ & $40$ & $11$ & $38$ & \cellcolor[gray]{.8} $13$ & \cellcolor[gray]{.8} $36$ & $15$ & $34$ & $17$ & $32$ & $19$ & $29$ & $22$ & $26$ & $25$ & $23$ & $28$ & $20$ & $31$ & $17$ & $34$ & $14$ & $37$ & $12$ & $39$ & $10$ & $41$ & $8$ \\
$(0,2,3)$ & $5$ & $44$ & $7$ & $42$ & $9$ & $40$ & $11$ & $38$ & \cellcolor[gray]{.8} $13$ & \cellcolor[gray]{.8} $35$ & \cellcolor[gray]{.8} $16$ & \cellcolor[gray]{.8} $32$ & \cellcolor[gray]{.8} $19$ & \cellcolor[gray]{.8} $29$ & \cellcolor[gray]{.8} $22$ & \cellcolor[gray]{.8} $27$ & \cellcolor[gray]{.8} $24$ & \cellcolor[gray]{.8} $25$ & $26$ & $23$ & $28$ & $21$ & $30$ & $19$ & $32$ & $17$ & $34$ & $14$ & $37$ & $11$ & $40$ & $8$ \\
$(1,1,3)$ & $5$ & $43$ & $8$ & $40$ & $11$ & $37$ & $14$ & $34$ & $17$ & $33$ & \cellcolor[gray]{.8} $18$ & \cellcolor[gray]{.8} $32$ & \cellcolor[gray]{.8} $19$ & \cellcolor[gray]{.8} $31$ & $20$ & $28$ & $23$ & $25$ & $26$ & $22$ & $29$ & $20$ & $31$ & $18$ & $33$ & $16$ & $35$ & $13$ & $38$ & $10$ & $41$ & $7$ \\
$(1,2,2)$ & $5$ & $43$ & $8$ & $40$ & $11$ & $37$ & $14$ & $34$ & $17$ & $32$ & $19$ & $30$ & $21$ & $28$ & \cellcolor[gray]{.8} $23$ & \cellcolor[gray]{.8} $27$ & \cellcolor[gray]{.8} $24$ & \cellcolor[gray]{.8} $26$ & $25$ & $25$ & $26$ & $22$ & $29$ & $19$ & $32$ & $16$ & $35$ & $13$ & $38$ & $10$ & $41$ & $7$ \\
$(0,2,4)$ & $6$ & $43$ & $8$ & $41$ & $10$ & $39$ & $12$ & $37$ & $14$ & $34$ & $17$ & $31$ & $20$ & $28$ & $23$ & $26$ & \cellcolor[gray]{.8} $25$ & \cellcolor[gray]{.8} $24$ & $27$ & $22$ & $29$ & $19$ & $32$ & $16$ & $35$ & $13$ & $38$ & $11$ & $40$ & $9$ & $42$ & $7$ \\
$(0,3,3)$ & $6$ & $43$ & $8$ & $41$ & $10$ & $39$ & $12$ & $37$ & $14$ & $34$ & $17$ & $31$ & $20$ & $28$ & $23$ & $25$ & $26$ & $22$ & $29$ & $19$ & \cellcolor[gray]{.8} $32$ & \cellcolor[gray]{.8} $18$ & $33$ & $17$ & $34$ & $16$ & $35$ & $13$ & $38$ & $10$ & $41$ & $7$ \\
$(1,1,4)$ & $6$ & $42$ & $9$ & $39$ & $12$ & $36$ & $15$ & $33$ & $18$ & $32$ & $19$ & $31$ & \cellcolor[gray]{.8} $20$ & \cellcolor[gray]{.8} $30$ & $21$ & $27$ & $24$ & $24$ & $27$ & $21$ & $30$ & $18$ & $33$ & $15$ & $36$ & $12$ & $39$ & $10$ & $41$ & $8$ & $43$ & $6$ \\
$(1,2,3)$ & $6$ & $42$ & $9$ & $39$ & $12$ & $36$ & $15$ & $33$ & $18$ & $31$ & $20$ & $29$ & $22$ & $27$ & $24$ & $25$ & \cellcolor[gray]{.8} $26$ & \cellcolor[gray]{.8} $23$ & \cellcolor[gray]{.8} $28$ & \cellcolor[gray]{.8} $21$ & $30$ & $19$ & $32$ & $17$ & $34$ & $15$ & $36$ & $12$ & $39$ & $9$ & $42$ & $6$ \\
$(2,2,2)$ & $6$ & $42$ & $9$ & $39$ & $12$ & $36$ & $15$ & $33$ & $18$ & $30$ & $21$ & $27$ & $24$ & $24$ & $27$ & $24$ & \cellcolor[gray]{.8} $27$ & \cellcolor[gray]{.8} $24$ & $27$ & $24$ & $27$ & $21$ & $30$ & $18$ & $33$ & $15$ & $36$ & $12$ & $39$ & $9$ & $42$ & $6$ \\
$(0,3,4)$ & $7$ & $42$ & $9$ & $40$ & $11$ & $38$ & $13$ & $36$ & $15$ & $33$ & $18$ & $30$ & $21$ & $27$ & $24$ & $24$ & $27$ & $21$ & $30$ & $18$ & \cellcolor[gray]{.8} $33$ & \cellcolor[gray]{.8} $16$ & \cellcolor[gray]{.8} $35$ & \cellcolor[gray]{.8} $14$ & \cellcolor[gray]{.8} $37$ & \cellcolor[gray]{.8} $12$ & $39$ & $10$ & $41$ & $8$ & $43$ & $6$ \\
$(1,2,4)$ & $7$ & $41$ & $10$ & $38$ & $13$ & $35$ & $16$ & $32$ & $19$ & $30$ & $21$ & $28$ & $23$ & $26$ & $25$ & $24$ & $27$ & $22$ & \cellcolor[gray]{.8} $29$ & \cellcolor[gray]{.8} $20$ & \cellcolor[gray]{.8} $31$ & \cellcolor[gray]{.8} $17$ & $34$ & $14$ & $37$ & $11$ & $40$ & $9$ & $42$ & $7$ & $44$ & $5$ \\
$(1,3,3)$ & $7$ & $41$ & $10$ & $38$ & $13$ & $35$ & $16$ & $32$ & $19$ & $30$ & $21$ & $28$ & $23$ & $26$ & $25$ & $23$ & $28$ & $20$ & $31$ & $17$ & \cellcolor[gray]{.8} $34$ & \cellcolor[gray]{.8} $16$ & \cellcolor[gray]{.8} $35$ & \cellcolor[gray]{.8} $15$ & $36$ & $14$ & $37$ & $11$ & $40$ & $8$ & $43$ & $5$ \\
$(2,2,3)$ & $7$ & $41$ & $10$ & $38$ & $13$ & $35$ & $16$ & $32$ & $19$ & $29$ & $22$ & $26$ & $25$ & $23$ & $28$ & $22$ & $29$ & $21$ & \cellcolor[gray]{.8} $30$ & \cellcolor[gray]{.8} $20$ & \cellcolor[gray]{.8} $31$ & \cellcolor[gray]{.8} $18$ & $33$ & $16$ & $35$ & $14$ & $37$ & $11$ & $40$ & $8$ & $43$ & $5$ \\
$(0,4,4)$ & $8$ & $41$ & $10$ & $39$ & $12$ & $37$ & $14$ & $35$ & $16$ & $32$ & $19$ & $29$ & $22$ & $26$ & $25$ & $23$ & $28$ & $20$ & $31$ & $17$ & $34$ & $14$ & $37$ & $11$ & $40$ & $8$ & \cellcolor[gray]{.8} $43$ & \cellcolor[gray]{.8} $7$ & \cellcolor[gray]{.8} $44$ & \cellcolor[gray]{.8} $6$ & $45$ & $5$ \\
$(1,3,4)$ & $8$ & $40$ & $11$ & $37$ & $14$ & $34$ & $17$ & $31$ & $20$ & $29$ & $22$ & $27$ & $24$ & $25$ & $26$ & $22$ & $29$ & $19$ & $32$ & $16$ & $35$ & $14$ & $37$ & $12$ & \cellcolor[gray]{.8} $39$ & \cellcolor[gray]{.8} $10$ & \cellcolor[gray]{.8} $41$ & \cellcolor[gray]{.8} $8$ & $43$ & $6$ & $45$ & $4$ \\
$(2,2,4)$ & $8$ & $40$ & $11$ & $37$ & $14$ & $34$ & $17$ & $31$ & $20$ & $28$ & $23$ & $25$ & $26$ & $22$ & $29$ & $21$ & $30$ & $20$ & $31$ & $19$ & $32$ & $16$ & $35$ & $13$ & \cellcolor[gray]{.8} $38$ & \cellcolor[gray]{.8} $10$ & \cellcolor[gray]{.8} $41$ & \cellcolor[gray]{.8} $8$ & $43$ & $6$ & $45$ & $4$ \\
$(2,3,3)$ & $8$ & $40$ & $11$ & $37$ & $14$ & $34$ & $17$ & $31$ & $20$ & $28$ & $23$ & $25$ & $26$ & $22$ & $29$ & $20$ & $31$ & $18$ & $33$ & $16$ & $35$ & $15$ & \cellcolor[gray]{.8} $36$ & \cellcolor[gray]{.8} $14$ & \cellcolor[gray]{.8} $37$ & \cellcolor[gray]{.8} $13$ & $38$ & $10$ & $41$ & $7$ & $44$ & $4$ \\
$(1,4,4)$ & $9$ & $39$ & $12$ & $36$ & $15$ & $33$ & $18$ & $30$ & $21$ & $28$ & $23$ & $26$ & $25$ & $24$ & $27$ & $21$ & $30$ & $18$ & $33$ & $15$ & $36$ & $12$ & $39$ & $9$ & $42$ & $6$ & $45$ & $5$ & \cellcolor[gray]{.8} $46$ & \cellcolor[gray]{.8} $4$ & \cellcolor[gray]{.8} $47$ & \cellcolor[gray]{.8} $3$ \\
$(2,3,4)$ & $9$ & $39$ & $12$ & $36$ & $15$ & $33$ & $18$ & $30$ & $21$ & $27$ & $24$ & $24$ & $27$ & $21$ & $30$ & $19$ & $32$ & $17$ & $34$ & $15$ & $36$ & $13$ & $38$ & $11$ & $40$ & $9$ & \cellcolor[gray]{.8} $42$ & \cellcolor[gray]{.8} $7$ & \cellcolor[gray]{.8} $44$ & \cellcolor[gray]{.8} $5$ & $46$ & $3$ \\
$(3,3,3)$ & $9$ & $39$ & $12$ & $36$ & $15$ & $33$ & $18$ & $30$ & $21$ & $27$ & $24$ & $24$ & $27$ & $21$ & $30$ & $18$ & $33$ & $15$ & $36$ & $12$ & $39$ & $12$ & $39$ & $12$ & \cellcolor[gray]{.8} $39$ & \cellcolor[gray]{.8} $12$ & $39$ & $9$ & $42$ & $6$ & $45$ & $3$ \\
$(2,4,4)$ & $10$ & $38$ & $13$ & $35$ & $16$ & $32$ & $19$ & $29$ & $22$ & $26$ & $25$ & $23$ & $28$ & $20$ & $31$ & $18$ & $33$ & $16$ & $35$ & $14$ & $37$ & $11$ & $40$ & $8$ & $43$ & $5$ & $46$ & $4$ & $47$ & $3$ & \cellcolor[gray]{.8} $48$ & \cellcolor[gray]{.8} $2$ \\
$(3,3,4)$ & $10$ & $38$ & $13$ & $35$ & $16$ & $32$ & $19$ & $29$ & $22$ & $26$ & $25$ & $23$ & $28$ & $20$ & $31$ & $17$ & $34$ & $14$ & $37$ & $11$ & $40$ & $10$ & $41$ & $9$ & $42$ & $8$ & $43$ & $6$ & \cellcolor[gray]{.8} $45$ & \cellcolor[gray]{.8} $4$ & $47$ & $2$ \\
$(3,4,4)$ & $11$ & $37$ & $14$ & $34$ & $17$ & $31$ & $20$ & $28$ & $23$ & $25$ & $26$ & $22$ & $29$ & $19$ & $32$ & $16$ & $35$ & $13$ & $38$ & $10$ & $41$ & $8$ & $43$ & $6$ & $45$ & $4$ & $47$ & $3$ & $48$ & $2$ & \cellcolor[gray]{.8} $49$ & \cellcolor[gray]{.8} $1$ \\
$(4,4,4)$ & $12$ & $36$ & $15$ & $33$ & $18$ & $30$ & $21$ & $27$ & $24$ & $24$ & $27$ & $21$ & $30$ & $18$ & $33$ & $15$ & $36$ & $12$ & $39$ & $9$ & $42$ & $6$ & $45$ & $3$ & $48$ & $0$ & $51$ & $0$ & $51$ & $0$ & \cellcolor[gray]{.8} $51$ & \cellcolor[gray]{.8} $0$ \\
\hline
 &   &  $48$ & $3$ & $46$ & $5$ & $44$ & $7$ & $39$ & $12$ & $35$ & $16$ & $32$ & $19$ & $29$ & $22$ & $27$ & $24$ & $23$ & $28$ & $20$ & $31$ & $16$ & $35$ & $14$ & $37$ & $10$ & $41$ & $7$ & $44$ & $4$ & $47$ &  \\
\end{tabular}
}
\end{center}
The highlighted entries show $T_w(\vec{\delta})$ for
$$c=(3,5,7,12,16,19,22,24,28,31,35,37,41,44,47).$$ 
Note that the $w(c)$ is not unimaginative, although one can check that it is still steady with respect to $T(\vec{\delta})$.

We may use Lemma \ref{rulesii-v}(a) and (b) to drop all rows in the first, second, third, sixth, seventh, eighth, $10$th and $16$th columns.
 We can also drop the rows in the fourth column with Lemma \ref{rulesii-v}(a) and (c).
  This leaves two rows in each of the fifth and ninth columns, which can thus be dropped with Lemma \ref{rulesii-v}(c).
   The remaining rows in the $11$th column can then be dropped with Lemma \ref{rulesii-v} (a), 
   and the remaining rows in the $15$th column can then be dropped with Lemma \ref{rulesii-v}(b).
    This leaves only one row in the $12$th column and two rows in the $14$th column, so these can be dropped with Lemma \ref{rulesii-v}(c).
     Finally, this leaves only one row in the $13$th column, so we are done. 
\end{ex}

\begin{ex}\label{ex:m3-r5} Consider the case $r=5$, $g=26$, $d=27$.
Then $\binom{r+3}{3}=56$, and $3d+1-g=56$, so this is in 
both the injective and surjective ranges.
We take the extendable $(g,r,d)$-sequence 
$\vec{\delta}=0,0,0,0,0,1,1,1,1,1,2,2,2,2,3,3,3,3,4,4,4,4,5,5,5,5$, which gives 
$T'(\vec{\delta})$ as follows.
\begin{center}
\resizebox{\textwidth}{!}{
\begin{tabular}{lr|lr|lr|lr|lr|lr|lr|lr|lr|lr|lr|lr|lr|lr|lr|lr|lr|lr|lr|lr|lr|lr|lr|lr|lr|lr}
$0$ & $27$ & $0$ & $27$ & $0$ & $27$ & $0$ & $27$ & $0$ & $27$ & $0$ & $26$ & $1$ & $25$ & $2$ & $24$ & $3$ & $23$ & $4$ & $22$ & $5$ & $21$ & $6$ & $20$ & $7$ & $19$ & $8$ & $18$ & $9$ & $17$ & $10$ & $16$ & $11$ & $15$ & $12$ & $14$ & $13$ & $13$ & $14$ & $12$ & $15$ & $11$ & $16$ & $10$ & $17$ & $9$ & $18$ & $8$ & $19$ & $7$ & $20$ & $6$ \\
$1$ & $25$ & $2$ & $24$ & $3$ & $23$ & $4$ & $22$ & $5$ & $21$ & $6$ & $21$ & $6$ & $21$ & $6$ & $21$ & $6$ & $21$ & $6$ & $21$ & $6$ & $20$ & $7$ & $19$ & $8$ & $18$ & $9$ & $17$ & $10$ & $16$ & $11$ & $15$ & $12$ & $14$ & $13$ & $13$ & $14$ & $12$ & $15$ & $11$ & $16$ & $10$ & $17$ & $9$ & $18$ & $8$ & $19$ & $7$ & $20$ & $6$ & $21$ & $5$ \\
$2$ & $24$ & $3$ & $23$ & $4$ & $22$ & $5$ & $21$ & $6$ & $20$ & $7$ & $19$ & $8$ & $18$ & $9$ & $17$ & $10$ & $16$ & $11$ & $15$ & $12$ & $15$ & $12$ & $15$ & $12$ & $15$ & $12$ & $15$ & $12$ & $14$ & $13$ & $13$ & $14$ & $12$ & $15$ & $11$ & $16$ & $10$ & $17$ & $9$ & $18$ & $8$ & $19$ & $7$ & $20$ & $6$ & $21$ & $5$ & $22$ & $4$ & $23$ & $3$ \\
$3$ & $23$ & $4$ & $22$ & $5$ & $21$ & $6$ & $20$ & $7$ & $19$ & $8$ & $18$ & $9$ & $17$ & $10$ & $16$ & $11$ & $15$ & $12$ & $14$ & $13$ & $13$ & $14$ & $12$ & $15$ & $11$ & $16$ & $10$ & $17$ & $10$ & $17$ & $10$ & $17$ & $10$ & $17$ & $10$ & $17$ & $9$ & $18$ & $8$ & $19$ & $7$ & $20$ & $6$ & $21$ & $5$ & $22$ & $4$ & $23$ & $3$ & $24$ & $2$ \\
$4$ & $22$ & $5$ & $21$ & $6$ & $20$ & $7$ & $19$ & $8$ & $18$ & $9$ & $17$ & $10$ & $16$ & $11$ & $15$ & $12$ & $14$ & $13$ & $13$ & $14$ & $12$ & $15$ & $11$ & $16$ & $10$ & $17$ & $9$ & $18$ & $8$ & $19$ & $7$ & $20$ & $6$ & $21$ & $5$ & $22$ & $5$ & $22$ & $5$ & $22$ & $5$ & $22$ & $5$ & $22$ & $4$ & $23$ & $3$ & $24$ & $2$ & $25$ & $1$ \\
$5$ & $21$ & $6$ & $20$ & $7$ & $19$ & $8$ & $18$ & $9$ & $17$ & $10$ & $16$ & $11$ & $15$ & $12$ & $14$ & $13$ & $13$ & $14$ & $12$ & $15$ & $11$ & $16$ & $10$ & $17$ & $9$ & $18$ & $8$ & $19$ & $7$ & $20$ & $6$ & $21$ & $5$ & $22$ & $4$ & $23$ & $3$ & $24$ & $2$ & $25$ & $1$ & $26$ & $0$ & $27$ & $0$ & $27$ & $0$ & $27$ & $0$ & $27$ & $0$ \\
\end{tabular}
}
\end{center}
We then get $T(\vec{\delta})$ as follows.
\begin{center}
\resizebox{\textwidth}{!}{
\begin{tabular}{llr|lr|lr|lr|lr|lr|lr|lr|lr|lr|lr|lr|lr|lr|lr|lr|lr|lr|lr|lr|lr|lr|lr|lr|lr|lr}
 &   &  $78$ & $3$ & $75$ & $6$ & $73$ & $8$ & $70$ & $11$ & $66$ & $15$ & $63$ & $18$ & $60$ & $21$ & $57$ & $24$ & $53$ & $28$ & $50$ & $31$ & $47$ & $34$ & $44$ & $37$ & $40$ & $41$ & $37$ & $44$ & $34$ & $47$ & $31$ & $50$ & $28$ & $53$ & $25$ & $56$ & $21$ & $60$ & $18$ & $63$ & $15$ & $66$ & $11$ & $70$ & $8$ & $73$ & $5$ & $76$ & $3$ & $78$ &  \\
\hline
$(0,0,0)$ & \cellcolor[gray]{.8} $0$ & \cellcolor[gray]{.8} $81$ & $0$ & $81$ & $0$ & $81$ & $0$ & $81$ & $0$ & $81$ & $0$ & $78$ & $3$ & $75$ & $6$ & $72$ & $9$ & $69$ & $12$ & $66$ & $15$ & $63$ & $18$ & $60$ & $21$ & $57$ & $24$ & $54$ & $27$ & $51$ & $30$ & $48$ & $33$ & $45$ & $36$ & $42$ & $39$ & $39$ & $42$ & $36$ & $45$ & $33$ & $48$ & $30$ & $51$ & $27$ & $54$ & $24$ & $57$ & $21$ & $60$ & $18$ \\
$(0,0,1)$ & \cellcolor[gray]{.8} $1$ & \cellcolor[gray]{.8} $79$ & $2$ & $78$ & $3$ & $77$ & $4$ & $76$ & $5$ & $75$ & $6$ & $73$ & $8$ & $71$ & $10$ & $69$ & $12$ & $67$ & $14$ & $65$ & $16$ & $62$ & $19$ & $59$ & $22$ & $56$ & $25$ & $53$ & $28$ & $50$ & $31$ & $47$ & $34$ & $44$ & $37$ & $41$ & $40$ & $38$ & $43$ & $35$ & $46$ & $32$ & $49$ & $29$ & $52$ & $26$ & $55$ & $23$ & $58$ & $20$ & $61$ & $17$ \\
$(0,0,2)$ & \cellcolor[gray]{.8} $2$ & \cellcolor[gray]{.8} $78$ & \cellcolor[gray]{.8} $3$ & \cellcolor[gray]{.8} $77$ & $4$ & $76$ & $5$ & $75$ & $6$ & $74$ & $7$ & $71$ & $10$ & $68$ & $13$ & $65$ & $16$ & $62$ & $19$ & $59$ & $22$ & $57$ & $24$ & $55$ & $26$ & $53$ & $28$ & $51$ & $30$ & $48$ & $33$ & $45$ & $36$ & $42$ & $39$ & $39$ & $42$ & $36$ & $45$ & $33$ & $48$ & $30$ & $51$ & $27$ & $54$ & $24$ & $57$ & $21$ & $60$ & $18$ & $63$ & $15$ \\
$(0,1,1)$ & $2$ & $77$ & \cellcolor[gray]{.8} $4$ & \cellcolor[gray]{.8} $75$ & \cellcolor[gray]{.8} $6$ & \cellcolor[gray]{.8} $73$ & \cellcolor[gray]{.8} $8$ & \cellcolor[gray]{.8} $71$ & $10$ & $69$ & $12$ & $68$ & $13$ & $67$ & $14$ & $66$ & $15$ & $65$ & $16$ & $64$ & $17$ & $61$ & $20$ & $58$ & $23$ & $55$ & $26$ & $52$ & $29$ & $49$ & $32$ & $46$ & $35$ & $43$ & $38$ & $40$ & $41$ & $37$ & $44$ & $34$ & $47$ & $31$ & $50$ & $28$ & $53$ & $25$ & $56$ & $22$ & $59$ & $19$ & $62$ & $16$ \\
$(0,0,3)$ & $3$ & $77$ & \cellcolor[gray]{.8} $4$ & \cellcolor[gray]{.8} $76$ & $5$ & $75$ & $6$ & $74$ & $7$ & $73$ & $8$ & $70$ & $11$ & $67$ & $14$ & $64$ & $17$ & $61$ & $20$ & $58$ & $23$ & $55$ & $26$ & $52$ & $29$ & $49$ & $32$ & $46$ & $35$ & $44$ & $37$ & $42$ & $39$ & $40$ & $41$ & $38$ & $43$ & $35$ & $46$ & $32$ & $49$ & $29$ & $52$ & $26$ & $55$ & $23$ & $58$ & $20$ & $61$ & $17$ & $64$ & $14$ \\
$(0,1,2)$ & $3$ & $76$ & $5$ & $74$ & $7$ & $72$ & \cellcolor[gray]{.8} $9$ & \cellcolor[gray]{.8} $70$ & \cellcolor[gray]{.8} $11$ & \cellcolor[gray]{.8} $68$ & $13$ & $66$ & $15$ & $64$ & $17$ & $62$ & $19$ & $60$ & $21$ & $58$ & $23$ & $56$ & $25$ & $54$ & $27$ & $52$ & $29$ & $50$ & $31$ & $47$ & $34$ & $44$ & $37$ & $41$ & $40$ & $38$ & $43$ & $35$ & $46$ & $32$ & $49$ & $29$ & $52$ & $26$ & $55$ & $23$ & $58$ & $20$ & $61$ & $17$ & $64$ & $14$ \\
$(1,1,1)$ & $3$ & $75$ & $6$ & $72$ & $9$ & $69$ & $12$ & $66$ & $15$ & $63$ & \cellcolor[gray]{.8} $18$ & \cellcolor[gray]{.8} $63$ & \cellcolor[gray]{.8} $18$ & \cellcolor[gray]{.8} $63$ & $18$ & $63$ & $18$ & $63$ & $18$ & $63$ & $18$ & $60$ & $21$ & $57$ & $24$ & $54$ & $27$ & $51$ & $30$ & $48$ & $33$ & $45$ & $36$ & $42$ & $39$ & $39$ & $42$ & $36$ & $45$ & $33$ & $48$ & $30$ & $51$ & $27$ & $54$ & $24$ & $57$ & $21$ & $60$ & $18$ & $63$ & $15$ \\
$(0,0,4)$ & $4$ & $76$ & \cellcolor[gray]{.8} $5$ & \cellcolor[gray]{.8} $75$ & \cellcolor[gray]{.8} $6$ & \cellcolor[gray]{.8} $74$ & $7$ & $73$ & $8$ & $72$ & $9$ & $69$ & $12$ & $66$ & $15$ & $63$ & $18$ & $60$ & $21$ & $57$ & $24$ & $54$ & $27$ & $51$ & $30$ & $48$ & $33$ & $45$ & $36$ & $42$ & $39$ & $39$ & $42$ & $36$ & $45$ & $33$ & $48$ & $31$ & $50$ & $29$ & $52$ & $27$ & $54$ & $25$ & $56$ & $22$ & $59$ & $19$ & $62$ & $16$ & $65$ & $13$ \\
$(0,1,3)$ & $4$ & $75$ & $6$ & $73$ & $8$ & $71$ & $10$ & $69$ & \cellcolor[gray]{.8} $12$ & \cellcolor[gray]{.8} $67$ & $14$ & $65$ & $16$ & $63$ & $18$ & $61$ & $20$ & $59$ & $22$ & $57$ & $24$ & $54$ & $27$ & $51$ & $30$ & $48$ & $33$ & $45$ & $36$ & $43$ & $38$ & $41$ & $40$ & $39$ & $42$ & $37$ & $44$ & $34$ & $47$ & $31$ & $50$ & $28$ & $53$ & $25$ & $56$ & $22$ & $59$ & $19$ & $62$ & $16$ & $65$ & $13$ \\
$(0,2,2)$ & $4$ & $75$ & $6$ & $73$ & $8$ & $71$ & $10$ & $69$ & \cellcolor[gray]{.8} $12$ & \cellcolor[gray]{.8} $67$ & $14$ & $64$ & $17$ & $61$ & $20$ & $58$ & $23$ & $55$ & $26$ & $52$ & $29$ & $51$ & $30$ & $50$ & $31$ & $49$ & $32$ & $48$ & $33$ & $45$ & $36$ & $42$ & $39$ & $39$ & $42$ & $36$ & $45$ & $33$ & $48$ & $30$ & $51$ & $27$ & $54$ & $24$ & $57$ & $21$ & $60$ & $18$ & $63$ & $15$ & $66$ & $12$ \\
$(1,1,2)$ & $4$ & $74$ & $7$ & $71$ & $10$ & $68$ & $13$ & $65$ & $16$ & $62$ & $19$ & $61$ & \cellcolor[gray]{.8} $20$ & \cellcolor[gray]{.8} $60$ & \cellcolor[gray]{.8} $21$ & \cellcolor[gray]{.8} $59$ & $22$ & $58$ & $23$ & $57$ & $24$ & $55$ & $26$ & $53$ & $28$ & $51$ & $30$ & $49$ & $32$ & $46$ & $35$ & $43$ & $38$ & $40$ & $41$ & $37$ & $44$ & $34$ & $47$ & $31$ & $50$ & $28$ & $53$ & $25$ & $56$ & $22$ & $59$ & $19$ & $62$ & $16$ & $65$ & $13$ \\
$(0,0,5)$ & $5$ & $75$ & $6$ & $74$ & \cellcolor[gray]{.8} $7$ & \cellcolor[gray]{.8} $73$ & \cellcolor[gray]{.8} $8$ & \cellcolor[gray]{.8} $72$ & $9$ & $71$ & $10$ & $68$ & $13$ & $65$ & $16$ & $62$ & $19$ & $59$ & $22$ & $56$ & $25$ & $53$ & $28$ & $50$ & $31$ & $47$ & $34$ & $44$ & $37$ & $41$ & $40$ & $38$ & $43$ & $35$ & $46$ & $32$ & $49$ & $29$ & $52$ & $26$ & $55$ & $23$ & $58$ & $20$ & $61$ & $18$ & $63$ & $16$ & $65$ & $14$ & $67$ & $12$ \\
$(0,1,4)$ & $5$ & $74$ & $7$ & $72$ & $9$ & $70$ & $11$ & $68$ & \cellcolor[gray]{.8} $13$ & \cellcolor[gray]{.8} $66$ & \cellcolor[gray]{.8} $15$ & \cellcolor[gray]{.8} $64$ & $17$ & $62$ & $19$ & $60$ & $21$ & $58$ & $23$ & $56$ & $25$ & $53$ & $28$ & $50$ & $31$ & $47$ & $34$ & $44$ & $37$ & $41$ & $40$ & $38$ & $43$ & $35$ & $46$ & $32$ & $49$ & $30$ & $51$ & $28$ & $53$ & $26$ & $55$ & $24$ & $57$ & $21$ & $60$ & $18$ & $63$ & $15$ & $66$ & $12$ \\
$(0,2,3)$ & $5$ & $74$ & $7$ & $72$ & $9$ & $70$ & $11$ & $68$ & \cellcolor[gray]{.8} $13$ & \cellcolor[gray]{.8} $66$ & \cellcolor[gray]{.8} $15$ & \cellcolor[gray]{.8} $63$ & \cellcolor[gray]{.8} $18$ & \cellcolor[gray]{.8} $60$ & \cellcolor[gray]{.8} $21$ & \cellcolor[gray]{.8} $57$ & \cellcolor[gray]{.8} $24$ & \cellcolor[gray]{.8} $54$ & $27$ & $51$ & $30$ & $49$ & $32$ & $47$ & $34$ & $45$ & $36$ & $43$ & $38$ & $41$ & $40$ & $39$ & $42$ & $37$ & $44$ & $35$ & $46$ & $32$ & $49$ & $29$ & $52$ & $26$ & $55$ & $23$ & $58$ & $20$ & $61$ & $17$ & $64$ & $14$ & $67$ & $11$ \\
$(1,1,3)$ & $5$ & $73$ & $8$ & $70$ & $11$ & $67$ & $14$ & $64$ & $17$ & $61$ & $20$ & $60$ & $21$ & $59$ & \cellcolor[gray]{.8} $22$ & \cellcolor[gray]{.8} $58$ & $23$ & $57$ & $24$ & $56$ & $25$ & $53$ & $28$ & $50$ & $31$ & $47$ & $34$ & $44$ & $37$ & $42$ & $39$ & $40$ & $41$ & $38$ & $43$ & $36$ & $45$ & $33$ & $48$ & $30$ & $51$ & $27$ & $54$ & $24$ & $57$ & $21$ & $60$ & $18$ & $63$ & $15$ & $66$ & $12$ \\
$(1,2,2)$ & $5$ & $73$ & $8$ & $70$ & $11$ & $67$ & $14$ & $64$ & $17$ & $61$ & $20$ & $59$ & $22$ & $57$ & $24$ & $55$ & \cellcolor[gray]{.8} $26$ & \cellcolor[gray]{.8} $53$ & \cellcolor[gray]{.8} $28$ & \cellcolor[gray]{.8} $51$ & $30$ & $50$ & $31$ & $49$ & $32$ & $48$ & $33$ & $47$ & $34$ & $44$ & $37$ & $41$ & $40$ & $38$ & $43$ & $35$ & $46$ & $32$ & $49$ & $29$ & $52$ & $26$ & $55$ & $23$ & $58$ & $20$ & $61$ & $17$ & $64$ & $14$ & $67$ & $11$ \\
$(0,1,5)$ & $6$ & $73$ & $8$ & $71$ & $10$ & $69$ & $12$ & $67$ & $14$ & $65$ & \cellcolor[gray]{.8} $16$ & \cellcolor[gray]{.8} $63$ & \cellcolor[gray]{.8} $18$ & \cellcolor[gray]{.8} $61$ & $20$ & $59$ & $22$ & $57$ & $24$ & $55$ & $26$ & $52$ & $29$ & $49$ & $32$ & $46$ & $35$ & $43$ & $38$ & $40$ & $41$ & $37$ & $44$ & $34$ & $47$ & $31$ & $50$ & $28$ & $53$ & $25$ & $56$ & $22$ & $59$ & $19$ & $62$ & $17$ & $64$ & $15$ & $66$ & $13$ & $68$ & $11$ \\
$(0,2,4)$ & $6$ & $73$ & $8$ & $71$ & $10$ & $69$ & $12$ & $67$ & $14$ & $65$ & $16$ & $62$ & $19$ & $59$ & $22$ & $56$ & \cellcolor[gray]{.8} $25$ & \cellcolor[gray]{.8} $53$ & \cellcolor[gray]{.8} $28$ & \cellcolor[gray]{.8} $50$ & \cellcolor[gray]{.8} $31$ & \cellcolor[gray]{.8} $48$ & $33$ & $46$ & $35$ & $44$ & $37$ & $42$ & $39$ & $39$ & $42$ & $36$ & $45$ & $33$ & $48$ & $30$ & $51$ & $28$ & $53$ & $26$ & $55$ & $24$ & $57$ & $22$ & $59$ & $19$ & $62$ & $16$ & $65$ & $13$ & $68$ & $10$ \\
$(0,3,3)$ & $6$ & $73$ & $8$ & $71$ & $10$ & $69$ & $12$ & $67$ & $14$ & $65$ & $16$ & $62$ & $19$ & $59$ & $22$ & $56$ & \cellcolor[gray]{.8} $25$ & \cellcolor[gray]{.8} $53$ & \cellcolor[gray]{.8} $28$ & \cellcolor[gray]{.8} $50$ & \cellcolor[gray]{.8} $31$ & \cellcolor[gray]{.8} $47$ & \cellcolor[gray]{.8} $34$ & \cellcolor[gray]{.8} $44$ & \cellcolor[gray]{.8} $37$ & \cellcolor[gray]{.8} $41$ & $40$ & $38$ & $43$ & $37$ & $44$ & $36$ & $45$ & $35$ & $46$ & $34$ & $47$ & $31$ & $50$ & $28$ & $53$ & $25$ & $56$ & $22$ & $59$ & $19$ & $62$ & $16$ & $65$ & $13$ & $68$ & $10$ \\
$(1,1,4)$ & $6$ & $72$ & $9$ & $69$ & $12$ & $66$ & $15$ & $63$ & $18$ & $60$ & $21$ & $59$ & $22$ & $58$ & \cellcolor[gray]{.8} $23$ & \cellcolor[gray]{.8} $57$ & \cellcolor[gray]{.8} $24$ & \cellcolor[gray]{.8} $56$ & $25$ & $55$ & $26$ & $52$ & $29$ & $49$ & $32$ & $46$ & $35$ & $43$ & $38$ & $40$ & $41$ & $37$ & $44$ & $34$ & $47$ & $31$ & $50$ & $29$ & $52$ & $27$ & $54$ & $25$ & $56$ & $23$ & $58$ & $20$ & $61$ & $17$ & $64$ & $14$ & $67$ & $11$ \\
$(1,2,3)$ & $6$ & $72$ & $9$ & $69$ & $12$ & $66$ & $15$ & $63$ & $18$ & $60$ & $21$ & $58$ & $23$ & $56$ & $25$ & $54$ & $27$ & $52$ & \cellcolor[gray]{.8} $29$ & \cellcolor[gray]{.8} $50$ & \cellcolor[gray]{.8} $31$ & \cellcolor[gray]{.8} $48$ & $33$ & $46$ & $35$ & $44$ & $37$ & $42$ & $39$ & $40$ & $41$ & $38$ & $43$ & $36$ & $45$ & $34$ & $47$ & $31$ & $50$ & $28$ & $53$ & $25$ & $56$ & $22$ & $59$ & $19$ & $62$ & $16$ & $65$ & $13$ & $68$ & $10$ \\
$(2,2,2)$ & $6$ & $72$ & $9$ & $69$ & $12$ & $66$ & $15$ & $63$ & $18$ & $60$ & $21$ & $57$ & $24$ & $54$ & $27$ & $51$ & $30$ & $48$ & $33$ & $45$ & $36$ & $45$ & \cellcolor[gray]{.8} $36$ & \cellcolor[gray]{.8} $45$ & $36$ & $45$ & $36$ & $45$ & $36$ & $42$ & $39$ & $39$ & $42$ & $36$ & $45$ & $33$ & $48$ & $30$ & $51$ & $27$ & $54$ & $24$ & $57$ & $21$ & $60$ & $18$ & $63$ & $15$ & $66$ & $12$ & $69$ & $9$ \\
$(0,2,5)$ & $7$ & $72$ & $9$ & $70$ & $11$ & $68$ & $13$ & $66$ & $15$ & $64$ & $17$ & $61$ & $20$ & $58$ & $23$ & $55$ & $26$ & $52$ & $29$ & $49$ & \cellcolor[gray]{.8} $32$ & \cellcolor[gray]{.8} $47$ & \cellcolor[gray]{.8} $34$ & \cellcolor[gray]{.8} $45$ & $36$ & $43$ & $38$ & $41$ & $40$ & $38$ & $43$ & $35$ & $46$ & $32$ & $49$ & $29$ & $52$ & $26$ & $55$ & $23$ & $58$ & $20$ & $61$ & $17$ & $64$ & $15$ & $66$ & $13$ & $68$ & $11$ & $70$ & $9$ \\
$(0,3,4)$ & $7$ & $72$ & $9$ & $70$ & $11$ & $68$ & $13$ & $66$ & $15$ & $64$ & $17$ & $61$ & $20$ & $58$ & $23$ & $55$ & $26$ & $52$ & $29$ & $49$ & $32$ & $46$ & $35$ & $43$ & \cellcolor[gray]{.8} $38$ & \cellcolor[gray]{.8} $40$ & \cellcolor[gray]{.8} $41$ & \cellcolor[gray]{.8} $37$ & \cellcolor[gray]{.8} $44$ & \cellcolor[gray]{.8} $35$ & $46$ & $33$ & $48$ & $31$ & $50$ & $29$ & $52$ & $27$ & $54$ & $25$ & $56$ & $23$ & $58$ & $21$ & $60$ & $18$ & $63$ & $15$ & $66$ & $12$ & $69$ & $9$ \\
$(1,1,5)$ & $7$ & $71$ & $10$ & $68$ & $13$ & $65$ & $16$ & $62$ & $19$ & $59$ & $22$ & $58$ & $23$ & $57$ & $24$ & $56$ & \cellcolor[gray]{.8} $25$ & \cellcolor[gray]{.8} $55$ & $26$ & $54$ & $27$ & $51$ & $30$ & $48$ & $33$ & $45$ & $36$ & $42$ & $39$ & $39$ & $42$ & $36$ & $45$ & $33$ & $48$ & $30$ & $51$ & $27$ & $54$ & $24$ & $57$ & $21$ & $60$ & $18$ & $63$ & $16$ & $65$ & $14$ & $67$ & $12$ & $69$ & $10$ \\
$(1,2,4)$ & $7$ & $71$ & $10$ & $68$ & $13$ & $65$ & $16$ & $62$ & $19$ & $59$ & $22$ & $57$ & $24$ & $55$ & $26$ & $53$ & $28$ & $51$ & $30$ & $49$ & \cellcolor[gray]{.8} $32$ & \cellcolor[gray]{.8} $47$ & \cellcolor[gray]{.8} $34$ & \cellcolor[gray]{.8} $45$ & $36$ & $43$ & $38$ & $41$ & $40$ & $38$ & $43$ & $35$ & $46$ & $32$ & $49$ & $29$ & $52$ & $27$ & $54$ & $25$ & $56$ & $23$ & $58$ & $21$ & $60$ & $18$ & $63$ & $15$ & $66$ & $12$ & $69$ & $9$ \\
$(1,3,3)$ & $7$ & $71$ & $10$ & $68$ & $13$ & $65$ & $16$ & $62$ & $19$ & $59$ & $22$ & $57$ & $24$ & $55$ & $26$ & $53$ & $28$ & $51$ & $30$ & $49$ & $32$ & $46$ & $35$ & $43$ & \cellcolor[gray]{.8} $38$ & \cellcolor[gray]{.8} $40$ & \cellcolor[gray]{.8} $41$ & \cellcolor[gray]{.8} $37$ & \cellcolor[gray]{.8} $44$ & \cellcolor[gray]{.8} $36$ & $45$ & $35$ & $46$ & $34$ & $47$ & $33$ & $48$ & $30$ & $51$ & $27$ & $54$ & $24$ & $57$ & $21$ & $60$ & $18$ & $63$ & $15$ & $66$ & $12$ & $69$ & $9$ \\
$(2,2,3)$ & $7$ & $71$ & $10$ & $68$ & $13$ & $65$ & $16$ & $62$ & $19$ & $59$ & $22$ & $56$ & $25$ & $53$ & $28$ & $50$ & $31$ & $47$ & $34$ & $44$ & $37$ & $43$ & $38$ & $42$ & \cellcolor[gray]{.8} $39$ & \cellcolor[gray]{.8} $41$ & $40$ & $40$ & $41$ & $38$ & $43$ & $36$ & $45$ & $34$ & $47$ & $32$ & $49$ & $29$ & $52$ & $26$ & $55$ & $23$ & $58$ & $20$ & $61$ & $17$ & $64$ & $14$ & $67$ & $11$ & $70$ & $8$ \\
$(0,3,5)$ & $8$ & $71$ & $10$ & $69$ & $12$ & $67$ & $14$ & $65$ & $16$ & $63$ & $18$ & $60$ & $21$ & $57$ & $24$ & $54$ & $27$ & $51$ & $30$ & $48$ & $33$ & $45$ & $36$ & $42$ & $39$ & $39$ & $42$ & $36$ & \cellcolor[gray]{.8} $45$ & \cellcolor[gray]{.8} $34$ & \cellcolor[gray]{.8} $47$ & \cellcolor[gray]{.8} $32$ & $49$ & $30$ & $51$ & $28$ & $53$ & $25$ & $56$ & $22$ & $59$ & $19$ & $62$ & $16$ & $65$ & $14$ & $67$ & $12$ & $69$ & $10$ & $71$ & $8$ \\
$(0,4,4)$ & $8$ & $71$ & $10$ & $69$ & $12$ & $67$ & $14$ & $65$ & $16$ & $63$ & $18$ & $60$ & $21$ & $57$ & $24$ & $54$ & $27$ & $51$ & $30$ & $48$ & $33$ & $45$ & $36$ & $42$ & $39$ & $39$ & $42$ & $36$ & $45$ & $33$ & $48$ & $30$ & $51$ & $27$ & $54$ & $24$ & \cellcolor[gray]{.8} $57$ & \cellcolor[gray]{.8} $23$ & $58$ & $22$ & $59$ & $21$ & $60$ & $20$ & $61$ & $17$ & $64$ & $14$ & $67$ & $11$ & $70$ & $8$ \\
$(1,2,5)$ & $8$ & $70$ & $11$ & $67$ & $14$ & $64$ & $17$ & $61$ & $20$ & $58$ & $23$ & $56$ & $25$ & $54$ & $27$ & $52$ & $29$ & $50$ & $31$ & $48$ & $33$ & $46$ & \cellcolor[gray]{.8} $35$ & \cellcolor[gray]{.8} $44$ & \cellcolor[gray]{.8} $37$ & \cellcolor[gray]{.8} $42$ & $39$ & $40$ & $41$ & $37$ & $44$ & $34$ & $47$ & $31$ & $50$ & $28$ & $53$ & $25$ & $56$ & $22$ & $59$ & $19$ & $62$ & $16$ & $65$ & $14$ & $67$ & $12$ & $69$ & $10$ & $71$ & $8$ \\
$(1,3,4)$ & $8$ & $70$ & $11$ & $67$ & $14$ & $64$ & $17$ & $61$ & $20$ & $58$ & $23$ & $56$ & $25$ & $54$ & $27$ & $52$ & $29$ & $50$ & $31$ & $48$ & $33$ & $45$ & $36$ & $42$ & $39$ & $39$ & $42$ & $36$ & \cellcolor[gray]{.8} $45$ & \cellcolor[gray]{.8} $34$ & \cellcolor[gray]{.8} $47$ & \cellcolor[gray]{.8} $32$ & $49$ & $30$ & $51$ & $28$ & $53$ & $26$ & $55$ & $24$ & $57$ & $22$ & $59$ & $20$ & $61$ & $17$ & $64$ & $14$ & $67$ & $11$ & $70$ & $8$ \\
$(2,2,4)$ & $8$ & $70$ & $11$ & $67$ & $14$ & $64$ & $17$ & $61$ & $20$ & $58$ & $23$ & $55$ & $26$ & $52$ & $29$ & $49$ & $32$ & $46$ & $35$ & $43$ & $38$ & $42$ & $39$ & $41$ & \cellcolor[gray]{.8} $40$ & \cellcolor[gray]{.8} $40$ & \cellcolor[gray]{.8} $41$ & \cellcolor[gray]{.8} $39$ & $42$ & $36$ & $45$ & $33$ & $48$ & $30$ & $51$ & $27$ & $54$ & $25$ & $56$ & $23$ & $58$ & $21$ & $60$ & $19$ & $62$ & $16$ & $65$ & $13$ & $68$ & $10$ & $71$ & $7$ \\
$(2,3,3)$ & $8$ & $70$ & $11$ & $67$ & $14$ & $64$ & $17$ & $61$ & $20$ & $58$ & $23$ & $55$ & $26$ & $52$ & $29$ & $49$ & $32$ & $46$ & $35$ & $43$ & $38$ & $41$ & $40$ & $39$ & $42$ & $37$ & $44$ & $35$ & \cellcolor[gray]{.8} $46$ & \cellcolor[gray]{.8} $34$ & \cellcolor[gray]{.8} $47$ & \cellcolor[gray]{.8} $33$ & $48$ & $32$ & $49$ & $31$ & $50$ & $28$ & $53$ & $25$ & $56$ & $22$ & $59$ & $19$ & $62$ & $16$ & $65$ & $13$ & $68$ & $10$ & $71$ & $7$ \\
$(0,4,5)$ & $9$ & $70$ & $11$ & $68$ & $13$ & $66$ & $15$ & $64$ & $17$ & $62$ & $19$ & $59$ & $22$ & $56$ & $25$ & $53$ & $28$ & $50$ & $31$ & $47$ & $34$ & $44$ & $37$ & $41$ & $40$ & $38$ & $43$ & $35$ & $46$ & $32$ & $49$ & $29$ & $52$ & $26$ & $55$ & $23$ & \cellcolor[gray]{.8} $58$ & \cellcolor[gray]{.8} $21$ & \cellcolor[gray]{.8} $60$ & \cellcolor[gray]{.8} $19$ & $62$ & $17$ & $64$ & $15$ & $66$ & $13$ & $68$ & $11$ & $70$ & $9$ & $72$ & $7$ \\
$(1,3,5)$ & $9$ & $69$ & $12$ & $66$ & $15$ & $63$ & $18$ & $60$ & $21$ & $57$ & $24$ & $55$ & $26$ & $53$ & $28$ & $51$ & $30$ & $49$ & $32$ & $47$ & $34$ & $44$ & $37$ & $41$ & $40$ & $38$ & $43$ & $35$ & $46$ & $33$ & \cellcolor[gray]{.8} $48$ & \cellcolor[gray]{.8} $31$ & \cellcolor[gray]{.8} $50$ & \cellcolor[gray]{.8} $29$ & $52$ & $27$ & $54$ & $24$ & $57$ & $21$ & $60$ & $18$ & $63$ & $15$ & $66$ & $13$ & $68$ & $11$ & $70$ & $9$ & $72$ & $7$ \\
$(1,4,4)$ & $9$ & $69$ & $12$ & $66$ & $15$ & $63$ & $18$ & $60$ & $21$ & $57$ & $24$ & $55$ & $26$ & $53$ & $28$ & $51$ & $30$ & $49$ & $32$ & $47$ & $34$ & $44$ & $37$ & $41$ & $40$ & $38$ & $43$ & $35$ & $46$ & $32$ & $49$ & $29$ & $52$ & $26$ & $55$ & $23$ & \cellcolor[gray]{.8} $58$ & \cellcolor[gray]{.8} $22$ & $59$ & $21$ & $60$ & $20$ & $61$ & $19$ & $62$ & $16$ & $65$ & $13$ & $68$ & $10$ & $71$ & $7$ \\
$(2,2,5)$ & $9$ & $69$ & $12$ & $66$ & $15$ & $63$ & $18$ & $60$ & $21$ & $57$ & $24$ & $54$ & $27$ & $51$ & $30$ & $48$ & $33$ & $45$ & $36$ & $42$ & $39$ & $41$ & $40$ & $40$ & $41$ & $39$ & \cellcolor[gray]{.8} $42$ & \cellcolor[gray]{.8} $38$ & $43$ & $35$ & $46$ & $32$ & $49$ & $29$ & $52$ & $26$ & $55$ & $23$ & $58$ & $20$ & $61$ & $17$ & $64$ & $14$ & $67$ & $12$ & $69$ & $10$ & $71$ & $8$ & $73$ & $6$ \\
$(2,3,4)$ & $9$ & $69$ & $12$ & $66$ & $15$ & $63$ & $18$ & $60$ & $21$ & $57$ & $24$ & $54$ & $27$ & $51$ & $30$ & $48$ & $33$ & $45$ & $36$ & $42$ & $39$ & $40$ & $41$ & $38$ & $43$ & $36$ & $45$ & $34$ & $47$ & $32$ & $49$ & $30$ & \cellcolor[gray]{.8} $51$ & \cellcolor[gray]{.8} $28$ & \cellcolor[gray]{.8} $53$ & \cellcolor[gray]{.8} $26$ & $55$ & $24$ & $57$ & $22$ & $59$ & $20$ & $61$ & $18$ & $63$ & $15$ & $66$ & $12$ & $69$ & $9$ & $72$ & $6$ \\
$(3,3,3)$ & $9$ & $69$ & $12$ & $66$ & $15$ & $63$ & $18$ & $60$ & $21$ & $57$ & $24$ & $54$ & $27$ & $51$ & $30$ & $48$ & $33$ & $45$ & $36$ & $42$ & $39$ & $39$ & $42$ & $36$ & $45$ & $33$ & $48$ & $30$ & $51$ & $30$ & $51$ & $30$ & \cellcolor[gray]{.8} $51$ & \cellcolor[gray]{.8} $30$ & $51$ & $30$ & $51$ & $27$ & $54$ & $24$ & $57$ & $21$ & $60$ & $18$ & $63$ & $15$ & $66$ & $12$ & $69$ & $9$ & $72$ & $6$ \\
$(0,5,5)$ & $10$ & $69$ & $12$ & $67$ & $14$ & $65$ & $16$ & $63$ & $18$ & $61$ & $20$ & $58$ & $23$ & $55$ & $26$ & $52$ & $29$ & $49$ & $32$ & $46$ & $35$ & $43$ & $38$ & $40$ & $41$ & $37$ & $44$ & $34$ & $47$ & $31$ & $50$ & $28$ & $53$ & $25$ & $56$ & $22$ & $59$ & $19$ & $62$ & $16$ & $65$ & $13$ & $68$ & $10$ & \cellcolor[gray]{.8} $71$ & \cellcolor[gray]{.8} $9$ & $72$ & $8$ & $73$ & $7$ & $74$ & $6$ \\
$(1,4,5)$ & $10$ & $68$ & $13$ & $65$ & $16$ & $62$ & $19$ & $59$ & $22$ & $56$ & $25$ & $54$ & $27$ & $52$ & $29$ & $50$ & $31$ & $48$ & $33$ & $46$ & $35$ & $43$ & $38$ & $40$ & $41$ & $37$ & $44$ & $34$ & $47$ & $31$ & $50$ & $28$ & $53$ & $25$ & $56$ & $22$ & $59$ & $20$ & \cellcolor[gray]{.8} $61$ & \cellcolor[gray]{.8} $18$ & \cellcolor[gray]{.8} $63$ & \cellcolor[gray]{.8} $16$ & $65$ & $14$ & $67$ & $12$ & $69$ & $10$ & $71$ & $8$ & $73$ & $6$ \\
$(2,3,5)$ & $10$ & $68$ & $13$ & $65$ & $16$ & $62$ & $19$ & $59$ & $22$ & $56$ & $25$ & $53$ & $28$ & $50$ & $31$ & $47$ & $34$ & $44$ & $37$ & $41$ & $40$ & $39$ & $42$ & $37$ & $44$ & $35$ & $46$ & $33$ & $48$ & $31$ & $50$ & $29$ & $52$ & $27$ & \cellcolor[gray]{.8} $54$ & \cellcolor[gray]{.8} $25$ & \cellcolor[gray]{.8} $56$ & \cellcolor[gray]{.8} $22$ & $59$ & $19$ & $62$ & $16$ & $65$ & $13$ & $68$ & $11$ & $70$ & $9$ & $72$ & $7$ & $74$ & $5$ \\
$(2,4,4)$ & $10$ & $68$ & $13$ & $65$ & $16$ & $62$ & $19$ & $59$ & $22$ & $56$ & $25$ & $53$ & $28$ & $50$ & $31$ & $47$ & $34$ & $44$ & $37$ & $41$ & $40$ & $39$ & $42$ & $37$ & $44$ & $35$ & $46$ & $33$ & $48$ & $30$ & $51$ & $27$ & $54$ & $24$ & $57$ & $21$ & $60$ & $20$ & \cellcolor[gray]{.8} $61$ & \cellcolor[gray]{.8} $19$ & $62$ & $18$ & $63$ & $17$ & $64$ & $14$ & $67$ & $11$ & $70$ & $8$ & $73$ & $5$ \\
$(3,3,4)$ & $10$ & $68$ & $13$ & $65$ & $16$ & $62$ & $19$ & $59$ & $22$ & $56$ & $25$ & $53$ & $28$ & $50$ & $31$ & $47$ & $34$ & $44$ & $37$ & $41$ & $40$ & $38$ & $43$ & $35$ & $46$ & $32$ & $49$ & $29$ & $52$ & $28$ & $53$ & $27$ & $54$ & $26$ & \cellcolor[gray]{.8} $55$ & \cellcolor[gray]{.8} $25$ & \cellcolor[gray]{.8} $56$ & \cellcolor[gray]{.8} $23$ & $58$ & $21$ & $60$ & $19$ & $62$ & $17$ & $64$ & $14$ & $67$ & $11$ & $70$ & $8$ & $73$ & $5$ \\
$(1,5,5)$ & $11$ & $67$ & $14$ & $64$ & $17$ & $61$ & $20$ & $58$ & $23$ & $55$ & $26$ & $53$ & $28$ & $51$ & $30$ & $49$ & $32$ & $47$ & $34$ & $45$ & $36$ & $42$ & $39$ & $39$ & $42$ & $36$ & $45$ & $33$ & $48$ & $30$ & $51$ & $27$ & $54$ & $24$ & $57$ & $21$ & $60$ & $18$ & $63$ & $15$ & $66$ & $12$ & $69$ & $9$ & \cellcolor[gray]{.8} $72$ & \cellcolor[gray]{.8} $8$ & \cellcolor[gray]{.8} $73$ & \cellcolor[gray]{.8} $7$ & $74$ & $6$ & $75$ & $5$ \\
$(2,4,5)$ & $11$ & $67$ & $14$ & $64$ & $17$ & $61$ & $20$ & $58$ & $23$ & $55$ & $26$ & $52$ & $29$ & $49$ & $32$ & $46$ & $35$ & $43$ & $38$ & $40$ & $41$ & $38$ & $43$ & $36$ & $45$ & $34$ & $47$ & $32$ & $49$ & $29$ & $52$ & $26$ & $55$ & $23$ & $58$ & $20$ & $61$ & $18$ & $63$ & $16$ & $65$ & $14$ & \cellcolor[gray]{.8} $67$ & \cellcolor[gray]{.8} $12$ & $69$ & $10$ & $71$ & $8$ & $73$ & $6$ & $75$ & $4$ \\
$(3,3,5)$ & $11$ & $67$ & $14$ & $64$ & $17$ & $61$ & $20$ & $58$ & $23$ & $55$ & $26$ & $52$ & $29$ & $49$ & $32$ & $46$ & $35$ & $43$ & $38$ & $40$ & $41$ & $37$ & $44$ & $34$ & $47$ & $31$ & $50$ & $28$ & $53$ & $27$ & $54$ & $26$ & $55$ & $25$ & $56$ & $24$ & \cellcolor[gray]{.8} $57$ & \cellcolor[gray]{.8} $21$ & \cellcolor[gray]{.8} $60$ & \cellcolor[gray]{.8} $18$ & \cellcolor[gray]{.8} $63$ & \cellcolor[gray]{.8} $15$ & \cellcolor[gray]{.8} $66$ & \cellcolor[gray]{.8} $12$ & $69$ & $10$ & $71$ & $8$ & $73$ & $6$ & $75$ & $4$ \\
$(3,4,4)$ & $11$ & $67$ & $14$ & $64$ & $17$ & $61$ & $20$ & $58$ & $23$ & $55$ & $26$ & $52$ & $29$ & $49$ & $32$ & $46$ & $35$ & $43$ & $38$ & $40$ & $41$ & $37$ & $44$ & $34$ & $47$ & $31$ & $50$ & $28$ & $53$ & $26$ & $55$ & $24$ & $57$ & $22$ & $59$ & $20$ & $61$ & $19$ & \cellcolor[gray]{.8} $62$ & \cellcolor[gray]{.8} $18$ & \cellcolor[gray]{.8} $63$ & \cellcolor[gray]{.8} $17$ & $64$ & $16$ & $65$ & $13$ & $68$ & $10$ & $71$ & $7$ & $74$ & $4$ \\
$(2,5,5)$ & $12$ & $66$ & $15$ & $63$ & $18$ & $60$ & $21$ & $57$ & $24$ & $54$ & $27$ & $51$ & $30$ & $48$ & $33$ & $45$ & $36$ & $42$ & $39$ & $39$ & $42$ & $37$ & $44$ & $35$ & $46$ & $33$ & $48$ & $31$ & $50$ & $28$ & $53$ & $25$ & $56$ & $22$ & $59$ & $19$ & $62$ & $16$ & $65$ & $13$ & $68$ & $10$ & $71$ & $7$ & $74$ & $6$ & \cellcolor[gray]{.8} $75$ & \cellcolor[gray]{.8} $5$ & \cellcolor[gray]{.8} $76$ & \cellcolor[gray]{.8} $4$ & $77$ & $3$ \\
$(3,4,5)$ & $12$ & $66$ & $15$ & $63$ & $18$ & $60$ & $21$ & $57$ & $24$ & $54$ & $27$ & $51$ & $30$ & $48$ & $33$ & $45$ & $36$ & $42$ & $39$ & $39$ & $42$ & $36$ & $45$ & $33$ & $48$ & $30$ & $51$ & $27$ & $54$ & $25$ & $56$ & $23$ & $58$ & $21$ & $60$ & $19$ & $62$ & $17$ & $64$ & $15$ & $66$ & $13$ & \cellcolor[gray]{.8} $68$ & \cellcolor[gray]{.8} $11$ & \cellcolor[gray]{.8} $70$ & \cellcolor[gray]{.8} $9$ & $72$ & $7$ & $74$ & $5$ & $76$ & $3$ \\
$(4,4,4)$ & $12$ & $66$ & $15$ & $63$ & $18$ & $60$ & $21$ & $57$ & $24$ & $54$ & $27$ & $51$ & $30$ & $48$ & $33$ & $45$ & $36$ & $42$ & $39$ & $39$ & $42$ & $36$ & $45$ & $33$ & $48$ & $30$ & $51$ & $27$ & $54$ & $24$ & $57$ & $21$ & $60$ & $18$ & $63$ & $15$ & $66$ & $15$ & $66$ & $15$ & \cellcolor[gray]{.8} $66$ & \cellcolor[gray]{.8} $15$ & \cellcolor[gray]{.8} $66$ & \cellcolor[gray]{.8} $15$ & $66$ & $12$ & $69$ & $9$ & $72$ & $6$ & $75$ & $3$ \\
$(3,5,5)$ & $13$ & $65$ & $16$ & $62$ & $19$ & $59$ & $22$ & $56$ & $25$ & $53$ & $28$ & $50$ & $31$ & $47$ & $34$ & $44$ & $37$ & $41$ & $40$ & $38$ & $43$ & $35$ & $46$ & $32$ & $49$ & $29$ & $52$ & $26$ & $55$ & $24$ & $57$ & $22$ & $59$ & $20$ & $61$ & $18$ & $63$ & $15$ & $66$ & $12$ & $69$ & $9$ & $72$ & $6$ & $75$ & $5$ & $76$ & $4$ & \cellcolor[gray]{.8} $77$ & \cellcolor[gray]{.8} $3$ & \cellcolor[gray]{.8} $78$ & \cellcolor[gray]{.8} $2$ \\
$(4,4,5)$ & $13$ & $65$ & $16$ & $62$ & $19$ & $59$ & $22$ & $56$ & $25$ & $53$ & $28$ & $50$ & $31$ & $47$ & $34$ & $44$ & $37$ & $41$ & $40$ & $38$ & $43$ & $35$ & $46$ & $32$ & $49$ & $29$ & $52$ & $26$ & $55$ & $23$ & $58$ & $20$ & $61$ & $17$ & $64$ & $14$ & $67$ & $13$ & $68$ & $12$ & $69$ & $11$ & $70$ & $10$ & \cellcolor[gray]{.8} $71$ & \cellcolor[gray]{.8} $8$ & \cellcolor[gray]{.8} $73$ & \cellcolor[gray]{.8} $6$ & $75$ & $4$ & $77$ & $2$ \\
$(4,5,5)$ & $14$ & $64$ & $17$ & $61$ & $20$ & $58$ & $23$ & $55$ & $26$ & $52$ & $29$ & $49$ & $32$ & $46$ & $35$ & $43$ & $38$ & $40$ & $41$ & $37$ & $44$ & $34$ & $47$ & $31$ & $50$ & $28$ & $53$ & $25$ & $56$ & $22$ & $59$ & $19$ & $62$ & $16$ & $65$ & $13$ & $68$ & $11$ & $70$ & $9$ & $72$ & $7$ & $74$ & $5$ & $76$ & $4$ & $77$ & $3$ & $78$ & $2$ & \cellcolor[gray]{.8} $79$ & \cellcolor[gray]{.8} $1$ \\
$(5,5,5)$ & $15$ & $63$ & $18$ & $60$ & $21$ & $57$ & $24$ & $54$ & $27$ & $51$ & $30$ & $48$ & $33$ & $45$ & $36$ & $42$ & $39$ & $39$ & $42$ & $36$ & $45$ & $33$ & $48$ & $30$ & $51$ & $27$ & $54$ & $24$ & $57$ & $21$ & $60$ & $18$ & $63$ & $15$ & $66$ & $12$ & $69$ & $9$ & $72$ & $6$ & $75$ & $3$ & $78$ & $0$ & $81$ & $0$ & $81$ & $0$ & $81$ & $0$ & \cellcolor[gray]{.8} $81$ & \cellcolor[gray]{.8} $0$ \\
\hline
 &   &  $78$ & $3$ & $75$ & $6$ & $73$ & $8$ & $70$ & $11$ & $66$ & $15$ & $63$ & $18$ & $60$ & $21$ & $57$ & $24$ & $53$ & $28$ & $50$ & $31$ & $47$ & $34$ & $44$ & $37$ & $40$ & $41$ & $37$ & $44$ & $34$ & $47$ & $31$ & $50$ & $28$ & $53$ & $25$ & $56$ & $21$ & $60$ & $18$ & $63$ & $15$ & $66$ & $11$ & $70$ & $8$ & $73$ & $5$ & $76$ & $3$ & $78$ &  \\
\end{tabular}
}
\end{center}
The highlighted entries show $T_w(\vec{\delta})$ for
$$c=
(3,6,8,11,15,18,21,24,28,31,34,37,41,44,47,50,53,56,60,63,66,70,73,76,78).$$ 
As in the $r=4$ case, this is not unimaginative, but it is steady with
respect to $T_w(\vec{\delta})$.

Now, first observe that the first, fourth, $17$th, $24$th, $25$th and $26$th columns can have all their rows dropped using Lemma \ref{rulesii-v}(b),
 and the $18$th column can have its rows dropped with Lemma \ref{rulesii-v}(a).
  We can then drop the remaining row in the third column, and then the remaining row in the second column.
   Likewise, we can drop the two remaining rows in the $23$rd column.
    Next, in the $22$nd column we have the rows $(2,4,5)$, $(3,3,5)$ and $(4,4,4)$ remaining, with $\delta_{22}=4$. 
    Then by Lemma \ref{rulesii-v}(d)  with $j=5$ we can drop the $(4,4,4)$ row, and then by Lemma \ref{rulesii-v}(c) the two remaining rows.
We again use Lemma \ref{rulesii-v}(c) to drop the two remaining rows in the $21$st column, and then again in the $20$th column, and then the $19$th column
(recalling that we have already dropped the rows in the $18$th column).
Thus, it remains to consider the rows which only appear in the fifth through $16$th columns.

Next, note that in the sixth column, the rows appearing are $(0,1,4)$, $(0,1,5)$, $(0,2,3)$ and $(1,1,1)$. 
Since $\delta_6=1$, we can apply Lemma \ref{rulesii-v}(d) with $j=0$ to drop row $(1,1,1)$. 
But then the remaining rows in the seventh column are $(0,1,5)$, $(0,2,3)$ and $(1,1,2)$, and $\delta_7=1$, so using Lemma \ref{rulesii-v}(d) with $j=2$ we can drop the $(0,1,5)$ 
row, and this allows us to drop the two remaining rows in the seventh column, and subsequently in the sixth, fifth and eighth columns, by using Lemma \ref{rulesii-v}(c) repeatedly.
 Similarly, the rows appearing in the $10$th column are $(0,2,4)$, $(0,3,3)$, $(1,2,2)$ and $(1,2,3)$, and $\delta_{10}=1$,
so we can use Lemma \ref{rulesii-v}(d) with $j=2$ to drop row $(0,3,3)$. 
The remaining rows in the ninth column are then $(0,2,4)$, $(1,1,5)$ and $(1,2,2)$, and $\delta_9=1$, so we can again apply Lemma \ref{rulesii-v}(d) with $j=2$ to drop
$(1,1,5)$, and then Lemma \ref{rulesii-v}(c) to drop the remaining rows in the ninth, and subsequently the $10$th, $11$th and $12$th columns. 
We can apply the same procedure a third time to the $13$th and $14$th columns, first using Lemma \ref{rulesii-v}(d) in the $13$th column with $j=3$ to drop row $(2,2,4)$,
and then again using Lemma \ref{rulesii-v}(d) with $j=3$, but this time in the $14$th column, to drop row $(2,2,5)$. 
We can then use Lemma \ref{rulesii-v}(c) to drop the remaining rows in the $13$th and $14$th columns.

We are left with the rows supported only in the $15$th and $16$th columns, which are $(0,3,5)$, $(1,3,4)$ and $(2,3,3)$. 
We can finally drop these using Lemma \ref{rulesvi} and Lemma \ref{rulesii-v}(c) .
\end{ex}

Combining Examples \ref{ex:m3-r3}, \ref{ex:m3-r4} and \ref{ex:m3-r5} with Proposition \ref{prop:injective-extend}, we conclude:

\begin{cor}\label{cor:m3-ranges} The Maximal Rank Conjecture holds for
$m=3$, and
\begin{ilist}
\itm $r=3$ with $g \geq 7$;
\itm $r=4$ with $g \geq 16$;
\itm $r=5$ with $g \geq 26$;
\end{ilist}

Moreover, in these cases the locus of $\overline{\cM}_g$ consisting of chains of genus-$1$ curves is not in the closure of the locus in $\cM_g$
 where the appropriate maximal rank condition fails.
\end{cor}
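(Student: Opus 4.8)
The plan is to deduce Corollary \ref{cor:m3-ranges} directly by feeding the three worked base cases of Examples \ref{ex:m3-r3}, \ref{ex:m3-r4}, and \ref{ex:m3-r5} into Proposition \ref{prop:injective-extend}, which then propagates each of them to all larger genera.

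First I would record what the three examples actually establish. For each of $(g,r,d) = (7,3,9)$, $(16,4,17)$, and $(26,5,27)$ one checks directly that $\binom{r+3}{3} \leq 3d+1-g$ (and $\rho \geq 0$, $r+g>d$), so the target rank is $\min\left(\binom{r+3}{3},\, 3d+1-g\right) = \binom{r+3}{3}$; and the explicit row-dropping procedures carried out in the three examples verify precisely that the corresponding $T_w(\vec{\delta})$ is $\binom{r+3}{3}$-expungeable, for a $w$ with respect to which $T(\vec{\delta})$ is steady --- $w$ being unimaginative (hence steady by Proposition \ref{prop:unimaginative}) in the $r=3$ case, and checked to be steady in the $r=4$ and $r=5$ cases. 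Theorem \ref{thm:main-reduction} then gives the Maximal Rank Conjecture for $(g,r,d,3)$ in these three cases, and, by virtue of its steadiness clause, the stronger statement that the chain of $g$ genus-$1$ curves is not in the closure of the locus in $\cM_g$ where the maximal rank condition fails.

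Next I would invoke extendability to move upward. Each of the three sequences $\vec{\delta}$ has the value $0$ occurring at most one time more than $r$ does, so by Proposition \ref{prop:extendable} it is extendable (as the examples already record). Since each base case is injective, the stronger form of Proposition \ref{prop:injective-extend}, available precisely because $\vec{\delta}$ is extendable, now yields for every $(g',r,d',3)$ with $g' \geq g$ and $\rho' \geq 0$ a $(g',r,d')$-sequence $\vec{\delta}'$ together with a $w'$ for which $T_{w'}(\vec{\delta}')$ is again $\binom{r+3}{3}$-expungeable. The one point needing a word of care is that we want $T(\vec{\delta}')$ to remain steady with respect to $w'$ so that the $\overline{\cM}_g$ conclusion also propagates: the construction in the proof of Proposition \ref{prop:injective-extend} obtains $w'$ from $w$ by appending values larger than $md'$, the appended columns of $T_{w'}(\vec{\delta}')$ being entirely erased, and this visibly preserves steadiness (the proposition already records the analogous claim for unimaginativeness). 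Thus Theorem \ref{thm:main-reduction} applies to $\vec{\delta}'$ and $w'$ as well, giving both the Maximal Rank Conjecture and the unclosedness statement for all $(g',r,d',3)$ with $r \in \{3,4,5\}$ and $g' \geq 7,\,16,\,26$ respectively. Since we work throughout under $r+g>d$ (the case $d \geq r+g$ being classical), this is exactly the assertion of Corollary \ref{cor:m3-ranges}.

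The conceptual content of the argument lies entirely in the supporting results already proved; the only real labour is the verification buried in the three Examples, namely that each displayed chain of row-drops is legitimate --- every step must match one of Rules (ii)--(vii) of Definition \ref{defn:row-dropping} with its hypotheses met --- and that the $w$ chosen in the $r=4$ and $r=5$ examples is genuinely steady with respect to $T(\vec{\delta})$. So the main obstacle is bookkeeping rather than ideas: confirming the correctness of the three large expungeability computations; everything downstream is a mechanical appeal to Propositions \ref{prop:extendable} and \ref{prop:injective-extend} and Theorem \ref{thm:main-reduction}.
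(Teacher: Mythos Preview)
Your proposal is correct and follows exactly the route the paper takes: the paper's entire argument for this corollary is the single sentence ``Combining Examples \ref{ex:m3-r3}, \ref{ex:m3-r4} and \ref{ex:m3-r5} with Proposition \ref{prop:injective-extend}, we conclude,'' and you have spelled out precisely what that combination entails. Your explicit remark that steadiness is preserved when appending entries larger than $md'$ to $w$ (needed for the ``moreover'' clause in the $r=4$ and $r=5$ cases where $w$ is steady but not unimaginative) is a detail the paper leaves implicit, so your write-up is in fact slightly more complete than the original.
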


Note that (subject to the hypothesis $r+g-d>0$) Corollary \ref{cor:surj} covers all $m=3$ cases with $r=3$ and $g \leq 6$, with $r=4$ and $g \leq 9$, 
and with $r=5$ and $g \leq 12$ (as well a number of additional cases).
Thus, there are no missing cases for $r=3$, and a relatively small number for $r=4$, but a rather significant number for $r=5$. 
We expect that any  given one of these cases can be handled as above, but do not see any simple way of handling them all simultaneously.

\begin{rem}\label{rem:m3-compare}
Comparing to previously known injectivity results for $m=3$, Larson \cite{la4} obtains injectivity for $r=3$ when $g \geq 9$, for $r=4$
when $g \geq 19$, and for $r=5$ when $g \geq 35$. 
Jensen and Payne \cite{j-p3} obtain all cases for $r=3$ and $r=4$, but in $r=5$ only treat the case $\rho=0$, which translates to $g \geq 30$.
\end{rem}

\begin{rem}\label{rem:r4-g14} It is interesting to note that for $r=4$ and $g=14$, all cases are injective; in fact, the smallest allowable $d$,
which is $d=16$, gives a case which is both injective and surjective.
However, the case $g=15$, $d=16$ is not injective, which is why we are forced to start with $g=16$ above. Indeed, the noninjective genus-$15$ case,
together with Proposition \ref{prop:injective-extend}, imply that we cannot treat the $g=14$, $d=16$ case with any extendable $(g,r,d)$-sequence
(however, it is not difficult to treat with a non-extendable sequence).
\end{rem}

\bibliographystyle{amsalpha}
\bibliography{gen}
\newcommand{\noopsort}[1]{} \newcommand{\printfirst}[2]{#1}
  \newcommand{\singleletter}[1]{#1} \newcommand{\switchargs}[2]{#2#1}
\providecommand{\bysame}{\leavevmode\hbox to3em{\hrulefill}\thinspace}
\providecommand{\MR}{\relax\ifhmode\unskip\space\fi MR }
\providecommand{\MRhref}[2]{%
  \href{http://www.ams.org/mathscinet-getitem?mr=#1}{#2}
}
\providecommand{\href}[2]{#2}

\end{document}